\newtheorem{theorem}{Theorem}[section]
\newtheorem{lemma}[theorem]{Lemma}
\newtheorem{corollary}[theorem]{Corollary}
\theoremstyle{definition}
\newtheorem{definition}[theorem]{Definition}
\newtheorem{example}[theorem]{Example}
\newtheorem{condition}{Condition}
\theoremstyle{remark}
\newtheorem{remark}[theorem]{Remark}
{\theoremstyle{plain}}
\numberwithin{equation}{section}
\newcommand{\Balg}{\mathcal{B}}
\newcommand{\dd}{{\rm d}}
\newcommand{\E}{{\rm E}}
\newcommand{\eps}{\varepsilon}
\newcommand{\Falg}{\mathcal{F}}
\newcommand{\FF}{\mathbb{F}}
\newcommand{\Galg}{\mathcal{G}}
\newcommand{\G}{\mathcal{G}} 
\def\JS{jacod2003limit}
\newcommand{\pr}{{\rm Pr}}
\newcommand{\qv}{{\rm QV}}
\newcommand{\tsqc}{{\rm TSQC}}
\newcommand{\abs}[1]{\lvert#1\lvert} 
\newcommand{\norm}[1]{\lVert#1\rVert} 
\begin{document}

\title[A CLT for second difference estimators]{A CLT for second difference estimators\\
with an application to volatility and intensity}

\author[Stoltenberg]{Emil A.~Stoltenberg}
\address{The University of Chicago and BI Norwegian Business School}
\curraddr{Sofies gate 75A, 0454 Oslo, Norway}
\email{emilstoltenberg@gmail.com}
\thanks{
Emil A. Stoltenberg (emilstoltenberg@gmail.com) would like to thank the Fulbright Foundation for financial support, The University of Chicago for their hospitality, Sylvie Bendier Decety for her kindness, and the PharmaTox Strategic Research Initiative at the Faculty of Mathematics and Natural Sciences, University of Oslo. The authors would also like to thank the United States National Science Foundation under grants DMS 17-13118 and DMS-2015530 (Zhang), and DMS 17-13129 and DMS-2015544 (Mykland). An earlier version of this paper was called {``}Volatility and intensity{''} \url{https://arxiv.org/abs/1903.09873}}
\author[Mykland]{Per A.~Mykland}
\address{Dept.~of Statistics, The University of Chicago}
\curraddr{5747 South Ellis Avenue, Chicago, IL 60637, USA}
\email{mykland@pascal.uchicago.edu}

\author[Zhang]{Lan Zhang}
\address{Dept.~of Finance, University of Illinois at Chicago}
\curraddr{601 South Morgan Street, mc 168,
Chicago, IL 60607, U.S.A}
\email{lanzhang@uic.edu}


\keywords{Asynchronous times; central limit theorem, consistency; convergence rates; counting processes; endogenous observation times; high-frequency; intensity; irregular times; microstructure; observed asymptotic variance; overlapping intervals; rolling intervals; sufficiency; two-scales estimation.}

\date{\today}


\begin{abstract} In this paper we introduce a general method for estimating the quadratic covariation of one or more spot parameters processes associated with continuous time semimartingales. This estimator is applicable to a wide range of spot parameter processes, and may also be used to estimate the leverage effect of stochastic volatility models. The estimator we introduce is based on sums of squared increments of second differences of the observed process, and the intervals over which the differences are computed are rolling and overlapping. This latter feature lets us take full advantage of the data, and, by sufficiency considerations, ought to outperform estimators that are only based on one partition of the observational window. The main result of the paper is a central limit theorem for such triangular array rolling quadratic variations. We highlight the wide applicability of this theorem by showcasing how it might be applied to a novel leverage effect estimator. The principal motivation for the present study, however, is that the discrete times at which a continuous time semimartingale is observed might depend on features of the observable process other than its level, such as its (non-observable) spot-volatility process. As the main application of our estimator, we therefore show how it may be used to estimate the quadratic covariation between the spot-volatility process and the intensity process of the observation times, when both of these are taken to be semimartingales. The finite sample properties of this estimator are studied by way of a simulation experiment, and we also apply this estimator in an empirical analysis of the Apple stock. Our analysis of the Apple stock indicates a rather strong correlation between the spot volatility process of the log-prices process and the times at which this stock is traded (hence observed). \end{abstract}

\maketitle

\section{Introduction}\label{sec::intro}
With an increasing availability of high frequency data, the ambition level as to what can be estimated with reasonable precision has, naturally, also been raised. This paper concerns the estimation of the quadratic covariation of various spot parameter processes associated with continuous time semimartingales, which are observed at discrete times over a finite interval of time. The main result of the paper is a central limit theorem that applies to a class of such estimators. Estimation of the quadratic covariation associated with spot parameter processes is, for example, important for learning about the (hyper-) parameters governing the spot parameter processes, e.g.~volatility-of-volatility; or for learning about possible dependencies between concurrently observed semimartingale processes; or for estimating the possible dependency between the observation times and various spot parameter processes associated with the observable process. The motivation for the present paper is an example of the latter, namely the estimation of the quadratic covariation between the volatility of a continuous semimartingale process, and the intensity processes governing the observation times of this process. 

To fix ideas, consider a typical analysis of high frequency data: Based on $n$ discrete time observations $X_{t_1},\ldots,X_{t_n}$ of a continuous semimartingale process $X_t$ one seeks to estimate an integrated parameter $\Theta$,  
\begin{equation}
\Theta_T = \int_0^T \theta_s \,\dd s ,
\notag
\end{equation}
where $\theta_t$ is a spot parameter process such as volatility, leverage effect, an instantaneous regression coefficient, or the like. The canonical example is the case where $\theta_t = \sigma_t^2$ is the spot-volatility process associated with an It{\^o} process of the form $\dd X_t = \mu_t \,\dd t + \sigma_t \,\dd W_t$, where $W_t$ is a standard Wiener process, and the problem is to estimate the integrated volatility $\int_0^T \sigma_s^2 \,\dd s$ over one or consecutive intervals of time. This example goes back to the research on realised volatility by \citet{abdl01}, \citet{barndorffnielsenshephard02}, \citet{jacodprotter98}, \citet{zhang2005tale}, and others. The econometric interest in investigating nonparametric estimates of this type grew out of the study of volatility clustering by \citet{engle82} and \citet{bollerslev86}. For further references, see
\citet{jacod2011discretization}, \citet{myklandzhang2012}, and \citet{ait2014high}.

The general setup and results of this paper take the following form. Let $\alpha_t$ and $\beta_t$ be spot parameter processes (potentially the same) associated with one or more semimartingale processes observed at discrete times over a finite interval of time $[0,T]$. In~\citet{mykland2017assessment,mykland2017assessmentsupplement} an estimator of the quadratic covariation $[\alpha,\beta]_T$ was introduced, and it was shown that this estimator is consistent. In the present paper we further derive the convergence rates for such estimators, and prove a general central limit theorem that, under some regularity conditions, applies to a wide range of estimators based on the second differencing of estimators of integrated spot processes. As mentioned, our main example of the use of this estimator is the problem of estimating the quadratic covariation between the volatility of a semimartingale process, and the intensity of the observation times of this process. This type of endogenous time problem exists in real applications but is often overlooked. We also sketch how our estimation methods and the central limit theorem can be applied to a novel estimator of the leverage effect.  

The paper proceeds as follows. In Section~\ref{eq::general_setup} we first describe the model and state our most important assumptions, subsequently we provide a heuristic derivation of the stochastic quantities that are important for the theory that follows. Section~\ref{sec::consistency} contains the consistency results and introduces the {``}two-scale{''} estimator of $[\alpha,\beta]$. These consistency results generalise the findings in~\citet{mykland2017assessment}. In Section~\ref{sec::normality} we present the main theoretical novelty of the paper, namely a central limit theorem for triangular array rolling quadratic variations based on second differencing of estimators of integrated spot processes. The proof of this theorem is deferred to Appendix~\ref{app::CLT_proof}. Section~\ref{sec::consistency} also contains an important corollary to the effect that the {`}Observed asymptotic variance{'} developed in~\citet{mykland2017assessment} yields consistent estimates of the asymptotic variance of the two-scales estimator we introduce. In Section~\ref{sec::volatility_and_intensity} we specialise the theory developed in the preceding sections to the problem of estimating the quadratic covariation between the spot parameter process of a continuous time semimartingale, and the intensity process of its observation times. This is the volatility-intensity problem. In Section~\ref{sec::simulations} we investigate the finite samples properties of our estimator by way of a simulation study, while Section~\ref{sec::empirical} contains an empirical analysis of the Apple stock observed over $21$ trading days in January $2018$. Most technical matters as well as long proofs can be found in the appendices. Appendix~\ref{app::stable_clt_jumps} also contains a stable central limit theorem for c{\`a}dl{\`a}g martingales, as well as a corollary with some alternative conditions that might be easier to check in applications.    

\section{The general setup and problem}\label{eq::general_setup}
In this section we first present the setting for our estimation procedures, define some key quantities, provide a heuristic overview of some important results, and explain what type of estimators our central limit theorem applies to. Subsequently, in Section~\ref{sec::consistency}, we provide a more formal presentation, and state the main consistency results of the paper.  

\subsection{Setup and basic insights}\label{subsec::setup} We suppose that one or more semimartingale processes $X_t$ are observed at high frequency over a finite interval of time $[0,T]$. The semimartingales $X_t$ are typically contaminated by microstructure noise, so what we observe is $Y_{t_i} = X_{t_i} + \eps_{t_i}$, for $i =1,\ldots,n$ time points, where $\eps_{t_i}$ is microstructure noise. Based on these data we form estimators $\widehat{\Theta}^n$ and $\widehat{\Lambda}^n$, which are consistent for $\Theta_t = \int_0^t \theta_s\,\dd s$ and $\Lambda_t  = \int_0^t \lambda_s\,\dd s$, respectively, where the spot parameter processes $\theta_t$ and $\lambda_t$ are also assumed to be semimartingales. Our results continue to hold when $\theta_t$ and $\lambda_t$ are replaced by the sequences $\theta_t^{(n)}$ and $\lambda_t^{(n)}$ of semimartingale processes, but to ease the notation we drop the superscript $n$ for the time being. The spot parameter processes $\theta_t$ may be the spot volatility of the continuous part $X^c$ of the process $\dd X_t = \sigma_s \,\dd W_s  + \text{$\dd t$-terms} + \text{jumps}$, with $W_t$ a standard Wiener process, that is $\theta_t = \sigma_t^2$; it may be the instantaneous leverage effect, $\theta_t = \dd [X^c,\sigma^2]_t/\dd t$; or the instantaneous volatility of volatility, $\theta_t = \dd [\sigma^2,\sigma^2]_t/\dd t$; or the stochastic intensity process governing the frequency of the observation times, etc. 

To be clear, the notation $[X,Y]_t$ refers to the continuous time quadratic covariation of two semimartingales $X$ and $Y$ from time zero to $t$ \citep[pp.~51--52]{\JS}. Semimartingales are defined in, for example, \citet[Definition~I.4.21, p.~43]{\JS}.  

\begin{definition} We assume that all our semimartingales are c{\`a}dl{\`a}g (right continuous with left limits), and that all data generating and latent processes live on the same filtered probability space $(\Omega,\Falg,\FF,P)$ with $\FF = (\Falg_t)_{0 \leq t \leq T}$, and that this filtered space satisfies the {`}usual conditions{'} \citep[Definitions I.1.2--I.1.3, p.~2]{\JS}. When necessary, we will also invoke sequences of filtrations $\FF^n = (\Falg_t^n)_{0 \leq t \leq T}$ on $(\Omega,\Falg,P)$, that is~$\Falg_{T}^n \subseteq \Falg$ for all $n$.      
\end{definition}
For the proof of the main central limit theorem of the paper, Theorem~\ref{theorem::clt}, we will need a few additional technical conditions on the structure of the filtered probability space. 
   
We now turn to the construction of our estimator. Divide the time interval $[0,T]$ into $B_n$ blocks $(t_{i-1}^n,t_i^n]$, of equal length, with $t_0^n = 0$ and $t_{B_n}^n = T$. Set $\Delta_n = T/B_n$, and for convenience, assume that $t_i^n = i\Delta_n$ for $i = 1,\ldots,B_n$. Since we shall permit rolling and overlapping intervals, let $K_n$ be an integer no greater than $B_n/2$. From now on we drop the index $n$ from the $t_i^n$, $B_n$ and $K_n$ when it does not cause confusion. For any real functions $\Theta_t$ and $\Lambda_t$, define
\begin{equation}
\qv_{B,K}(\Theta,\Lambda)_{T} = \frac{1}{K}\sum_{i=K}^{B-K}(\Theta_{(t_i,t_{i+K}]} - \Theta_{(t_{i-K},t_{i}]})(\Lambda_{(t_i,t_{i+K}]} - \Lambda_{(t_{i-K},t_{i}]}),
\label{eq::QV.general}
\end{equation}      
where $\Theta_{(s,t]} = \Theta_{t} - \Theta_{s}$, and write $\qv_{B,K}(\Theta)_{T} = \qv_{B,K}(\Theta,\Theta)_{T}$. For $l=1,\ldots,2K_n$, the notation $i \equiv l[2K_n]$ means that  
\begin{equation}
i = 2K_nj + l, \quad {\rm for } \quad K_n \leq i \leq B_n-K_n,
\notag
\end{equation}
with $j$ an increasing sequence of integers. The basic building block for all the estimators we present is the rolling quadratic covariation
\begin{equation}
\qv_{B,K}(\widehat{\Theta}^n,\widehat{\Lambda}^n)_{T}
= \frac{1}{K}\sum_{i=K}^{B-K}(\widehat{\Theta}^n_{(t_i,t_{i+K}]} - \widehat{\Theta}_{(t_{i-K},t_{i}]})(\widehat{\Lambda}^n_{(t_i,t_{i+K}]} - \widehat{\Lambda}^n_{(t_{i-K},t_{i}]}), 
\notag
\end{equation}
where $\widehat{\Theta}^n$ and $\widehat{\Lambda}^n$ are consistent estimators of the integrated spot processes $\Theta_t = \int_0^t \theta_s\,\dd s$ and $\Lambda_t = \int_0^t \lambda_s\,\dd s$, respectively. It is important to keep in mind that $\qv_{B,K}(\Theta,\Theta)_{T}$ and $\qv_{B,K}(\widehat{\Theta}^n,\widehat{\Lambda}^n)_{T}$ are defined on the discrete grid $\{0,\Delta_n,2\Delta_n,\ldots,T\}$, as opposed to the continuous time quadratic covariation $[X,Y]_t$.    

To see how $\qv_{B,K}(\widehat{\Theta}^n,\widehat{\Lambda}^n)_{T}$ is used to estimate $[\theta,\lambda]_T$, we here present a heuristic analysis, to be made precise in the subsequent section. Under the assumption that $\widehat{\Theta}_t^n$ can be expressed as a sum of $\Theta_t = \int_0^t \theta_s\,\dd s$, an error martingale, and terms associated with the edge effects, we can write,
\begin{equation}
{\rm QV}_{B,K}(\widehat{\Theta}^n,\widehat{\Lambda}^n)_{T}
= {\rm QV}_{B,K}(\Theta,\Lambda)_{T} + \text{estimation error},
\notag
\end{equation} 
where the {`}estimation error{'} might contain terms that are not asymptotically negligible, and must be dealt with by so-called two-scale constructions \citep{zhang2005tale,mykland2019algebra}. We return to this issue shortly. From \citet[Theorem~1, p.203]{mykland2017assessment}, we have the {`}Integral-to-Spot Device{'}, that is 
\begin{equation}
\frac{\qv_{B_n,K_n}(\Theta,\Lambda)}{(K_n\Delta_n)^2}  
= \frac{2}{3}\big(1 - \frac{1}{K_n}\big)[\theta,\lambda]_{T-}
+ \frac{1}{K_n^2} \int_0^{T} \big( \big(\frac{t^{*} - t}{\Delta_n} \big)^2 + \big( \frac{t - t_{*}}{\Delta_n}\big)^2 \big)\,\dd [\theta,\lambda]_{t} + o_p(1),
\notag
\end{equation}
as $K_n\Delta_n \to 0$, and $t_{*} = \min\{i\Delta_n \colon i\Delta_n < t \}$ and $t^{*} = \max\{i\Delta_n \colon i\Delta_n \geq t \}$. The key ingredient for proving this theorem is an application of Lemma~2 in~\citet[p.~206]{mykland2017assessment}, from which we obtain that 
\begin{equation}
\begin{split}
\frac{\qv_{B_n,K_n}(\Theta,\Lambda)}{(K_n\Delta_n)^2}   &
= \frac{1}{K}\sum_{i=K}^{B - K} (\int_{t_i}^{t_{i+K}}\frac{t_{i+K} - s}{K\Delta_n}\,\dd \theta_s + \int_{t_{i-K}}^{t_{i}}\frac{s - t_{i-K}}{K\Delta_n}\,\dd \theta_s)\\
& \qquad \qquad \qquad \qquad\times (\int_{t_i}^{t_{i+K}}\frac{t_{i+K} - s}{K_n\Delta_n}\,\dd \lambda_s + \int_{t_{i-K}}^{t_{i}}\frac{s - t_{i-K}}{K_n\Delta_n}\,\dd \lambda_s)\\
& = \frac{1}{K}\sum_{l=1}^{2K}\sum_{i \equiv l[2K]} \int_{t_{i-K}}^{t_{i+K}} f_{s}^{(l,K)}\,\dd \theta_s \int_{t_{i-K}}^{t_{i+K}} f_{s}^{(l,K)}\,\dd \lambda_s, 
\end{split}
\label{eq::qv_heuristics1}
\end{equation} 
where $f_s^{(l,K)}$ for $l = 1,\ldots,2K$ are the functions
\begin{equation}
f_s^{(l,K)} = \sum_{i \equiv l[2K], K \leq i \leq B - K} (\frac{t_{i+K} - s}{K_n\Delta_n}
I\{t_{i} \leq s < t_{i+K}\} + \frac{s - t_{i-K}}{K_n\Delta_n}I\{t_{i-K} \leq s < t_{i}\}).
\label{eq::f_func1}
\end{equation}
The central limit theorem we present in Section~\ref{sec::normality} concerns quantities of the type 
\begin{equation}
\frac{1}{2K}\sum_{l=1}^{2K}\bigg\{\sum_{i \equiv l[2K]}\int_{t_{i-K}}^{t_{i+K}} f_s^{(l,n)}\,\dd\alpha_s^{(n)} \int_{t_{i-K}}^{t_{i+K}} g_s^{(l,n)}\,\dd\beta_s^{(n)} - \int_{0}^{T}f_s^{(l,n)}g_s^{(l,n)}\,\dd [\alpha^{(n)},\beta^{(n)}]_s \bigg\},
\label{eq::clt_object1}
\end{equation}
as $K_n\Delta_n\to 0$ and $K_n\to \infty$. In~\eqref{eq::clt_object1} the functions $f_s^{(l,n)}$ and $g_s^{(l,n)}$ are bounded and deterministic, while $\alpha^{(n)}$ and $\beta^{(n)}$ are sequences of semimartingale processes. We see that the right hand side of~\eqref{eq::qv_heuristics1} is a special case of \eqref{eq::clt_object1}, and so are the non-negligible terms contained in the {`}estimation error{'} referred to above. 

\subsection{Consistency}\label{sec::consistency} Suppose that $\Theta_t^{(n)} = \int_0^t \theta_s^{(n)}\,\dd s$ and $\Lambda_t^{(n)} = \int_0^t \lambda_s^{(n)}\,\dd s$ are two integrated spot-processes, and that $\theta_t^{(n)}$ and $\lambda_t^{(n)}$ are sequences of semimartingales adapted to $\FF^n$ (or $\FF$ in the case that $\theta_t^{(n)} = \theta_t$ or $\lambda_t^{(n)} = \lambda$ for all $n$), and that both sequences satisfy Condition~\ref{cond:modified-semimgs} below. If  $\theta_t^{(n)}$ and $\lambda_t^{(n)}$ depend on $n$, we assume that the pair converges in probability to limiting semimartingales $(\theta_t$ and $\lambda_t)$, and that $[\theta^{(n)},\lambda^{(n)}]$ converges in probability to $[\theta,\lambda]$. 

The two spot-processes might be associated with the same underlying semimartingale (in which case we can have $\theta^{(n)} = \lambda^{(n)}$ for all $n$), or with two different semimartingales concurrently observed. In the latter case, the sampling times can be asynchronous, and the total number of observations may differ. To not overburden the notation, however, we assume that the number of observations are the same for both processes, and equals $n$. We are given the estimators $\widehat{\Theta}^n_t$ and $\widehat{\Lambda}^n_t$ of $\Theta_t^{(n)}$ and $\Lambda_t^{(n)}$, respectively. Both $\widehat{\Theta}^n_t$ and $\widehat{\Lambda}^n_t$ are consistent and admit representations of the type $\widehat{\Theta}^n_t = \Theta_t + M_{n,t}^{\theta} + e^{\theta}_{n,t} - \tilde{e}^{\theta}_{n,0}$, in terms of a semimartingale $M_{n,t}^{\theta}$ and edge effects $e^{\theta}_{n,t}$ and $\tilde{e}^{\theta}_{n,0}$ associated with phasing in and phasing out the estimator, respectively. For $s<t$ we write $\widehat{\Theta}^n_{(s,t]} = \widehat{\Theta}^n_{t} - \widehat{\Theta}^n_{s}$. This means that for $s<t$ the estimators can be represented as    
\begin{equation}
\begin{split}
\widehat{\Theta}_{(s,t]} - \Theta_{(s,t]} &= M_{n,t}^{\theta} - M_{n,s}^{\theta} + e_{n,t}^{\theta} - e_{n,s}^{\theta},\\
\widehat{\Lambda}_{(s,t]} - \Lambda_{(s,t]} & = M_{n,t}^{\lambda} - M_{n,s}^{\lambda} + e_{n,t}^{\lambda} - e_{n,s}^{\lambda}.
\end{split}
\label{eq::GH.decomp1}
\end{equation}
The assumption, implicit in~\eqref{eq::GH.decomp1}, that the edge effect of phasing in an estimator at $s<t$ is the same as the edge effect associated with phasing out an estimator at $t$. This is exact in the (usual) case of additive estimators \citep[Section 5.1, p.~215]{mykland2017assessment}. The results that follow extend with little effort to situations where the edge effects in the two ends of the interval behave differently. 

\begin{definition} {\sc (Stable convergence)}. We say that a sequence $Z_n = (Z_{n,t})_{0 \leq t \leq T}$ of martingales converges stably in law to $Z = (Z_t)_{0 \leq t \leq T}$ with respect to $\Galg \subseteq \Falg$ if (i) $Z$ is measurable with respect to $\widetilde{\Galg}$ belonging to an extension $(\widetilde{\Omega},\widetilde{\Galg},\widetilde{P})$ of $(\Omega,\Galg,P)$; and (ii) for every $\Galg$ measurable (real-valued) random variable $Y$, the sequence $(Z_n,Y)$ converges in law to $(Z_n,Y)$. We then write $Z_n \Rightarrow Z$ stably.   
\end{definition}

\begin{condition}\label{cond::MGs} Assume that~\eqref{eq::GH.decomp1} holds, and that there are $\alpha>0$ and $\beta > 0$ such that, as $n \to \infty$, 
\begin{equation}
n^{\alpha}M_{n}^{\theta} \Rightarrow L^{\theta}\quad {\rm and}\quad n^{\beta}M_{n}^{\lambda} \Rightarrow L^{\lambda}\qquad \text{stably},
\notag
\end{equation}
with respect to a $\sigma$-algebra $\Galg \subseteq \Falg$. Both $n^{\alpha}M_{n,t}^{\theta}$ and $n^{\beta}M_{n,t}^{\lambda}$ are P-UT (see Appendix~\ref{app::appendixA}), and the quadratic variations $[L^{\theta},L^{\theta}]_{T}$ and $[L^{\lambda},L^{\lambda}]_{T}$ are measurable with respect to $\Galg$. 
\end{condition} 
\begin{remark} The requirements of Condition~\ref{cond::MGs} are likely to be satisfied in applications, but they are stronger than what we need for the present purposes. For the consistency results of this section we only need the weaker Condition~5 of \citet[p.~7]{mykland2017assessmentsupplement}, which is implied by Condition~\ref{cond::MGs}. A sequence of semimartingales $(Z_n) = (Z_{n,t})_{0 \leq t \leq T}$ fulfills this condition if it is tight and P-UT (see \citet[p.~377]{\JS} for the definition of the P-UT property).
\end{remark}

\begin{theorem}\label{theorem::Th3.multivariate} {\sc (Consistency of the covariance estimator)} Assume that $\widehat{\Theta}_t^n$ and $\widehat{\Lambda}_t^n$ satisfy \eqref{eq::GH.decomp1} and Condition~\ref{cond::MGs}. Let $K = K_n$ be positive integers, assume that $K_n\Delta_n \to 0$, and that the edge effects  $e^{\theta}_t$ and $e^{\lambda}_t$ are $o_p((K_n\Delta_n)^{1/2}n^{-\alpha})$ and $o_p((K_n\Delta_n)^{1/2}n^{-\beta})$, respectively. Then
\begin{equation}
\qv_{B,K}(\widehat{\Theta},\widehat{\Lambda})_T = 2[M_n^{\theta},M_n^{\lambda}]_{T} + \frac{2}{3}(K_n\Delta_n)^2[\theta^{(n)},\lambda^{(n)}]_{T} + o_p( (K_n\Delta_n)^{2} ) + o_p(n^{-(\alpha + \beta)}),
\notag
\end{equation}
as $n \to \infty$. 
\end{theorem}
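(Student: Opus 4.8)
The plan is to expand $\qv_{B,K}(\widehat\Theta^n,\widehat\Lambda^n)_T$ bilinearly and to handle the resulting pieces one at a time. The map $\Theta\mapsto\Theta_{(t_i,t_{i+K}]}-\Theta_{(t_{i-K},t_i]}=\Theta_{t_{i+K}}-2\Theta_{t_i}+\Theta_{t_{i-K}}$ is linear, so $\qv_{B,K}(\cdot,\cdot)_T$ is a symmetric, non-negative definite bilinear form on pairs of real processes and in particular obeys the pathwise Cauchy--Schwarz inequality $|\qv_{B,K}(U,V)_T|\le\qv_{B,K}(U)_T^{1/2}\qv_{B,K}(V)_T^{1/2}$. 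Inserting the decomposition~\eqref{eq::GH.decomp1}, in which the additive constants cancel out of every increment, gives the identity $\qv_{B,K}(\widehat\Theta^n,\widehat\Lambda^n)_T=\sum_{A,A'}\qv_{B,K}(A,A')_T$ over $A\in\{\Theta,M_n^\theta,e_n^\theta\}$ and $A'\in\{\Lambda,M_n^\lambda,e_n^\lambda\}$, a sum of nine terms. I would show that the two ``diagonal'' pieces $\qv_{B,K}(\Theta,\Lambda)_T$ and $\qv_{B,K}(M_n^\theta,M_n^\lambda)_T$ supply the two leading contributions asserted in the statement, and that the remaining seven are remainders.

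For $\qv_{B,K}(\Theta,\Lambda)_T$ this is immediate from the Integral-to-Spot Device (\citet[Theorem~1]{mykland2017assessment}, recalled above): it equals $\tfrac{2}{3}(K_n\Delta_n)^2[\theta^{(n)},\lambda^{(n)}]_T$ up to $o_p((K_n\Delta_n)^2)$. For $\qv_{B,K}(M_n^\theta,M_n^\lambda)_T$ I would write each second difference as $M_{n,t_{i+K}}^\theta-2M_{n,t_i}^\theta+M_{n,t_{i-K}}^\theta=\int c_i\,dM_n^\theta$, where $c_i$ is the function equal to $+1$ on $(t_i,t_{i+K}]$ and $-1$ on $(t_{i-K},t_i]$, and apply the semimartingale product rule to $\bigl(\int c_i\,dM_n^\theta\bigr)\bigl(\int c_i\,dM_n^\lambda\bigr)$. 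Because $c_i^2=I_{(t_{i-K},t_{i+K}]}$, the bracket part contributes $\tfrac1K\sum_{i=K}^{B-K}[M_n^\theta,M_n^\lambda]_{(t_{i-K},t_{i+K}]}$, and since each interior grid cell is covered by exactly $2K_n$ of the windows $(t_{i-K},t_{i+K}]$ this telescopes to $2[M_n^\theta,M_n^\lambda]_T$ minus an edge remainder carried by intervals of width $O(K_n\Delta_n)$; by the Kunita--Watanabe inequality the latter is at most $[M_n^\theta,M_n^\theta]_{\mathrm{edge}}^{1/2}[M_n^\lambda,M_n^\lambda]_{\mathrm{edge}}^{1/2}$, which is $o_p(n^{-(\alpha+\beta)})$ since by Condition~\ref{cond::MGs} (P-UT together with stable convergence) the masses of $[M_n^\theta,M_n^\theta]$ and $[M_n^\lambda,M_n^\lambda]$ over shrinking intervals are $o_p(n^{-2\alpha})$ and $o_p(n^{-2\beta})$. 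The two stochastic-integral terms produced by the product rule sum to a martingale each of whose inner integrals runs over a window of length $O(K_n\Delta_n)$; bounding its predictable quadratic variation and invoking Lenglart's inequality shows that it is $o_p(n^{-(\alpha+\beta)})$ because $K_n\Delta_n\to0$. Hence $\qv_{B,K}(M_n^\theta,M_n^\lambda)_T=2[M_n^\theta,M_n^\lambda]_T+o_p(n^{-(\alpha+\beta)})$, and the specialisation $\theta=\lambda$ also yields $\qv_{B,K}(M_n^\theta)_T=O_p(n^{-2\alpha})$ and $\qv_{B,K}(M_n^\lambda)_T=O_p(n^{-2\beta})$.

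For the seven off-diagonal terms I would first record the diagonal rates $\qv_{B,K}(\Theta)_T,\ \qv_{B,K}(\Lambda)_T=O_p((K_n\Delta_n)^2)$ from the Integral-to-Spot Device, and $\qv_{B,K}(e_n^\theta)_T\le 16(B_n/K_n)\bigl(\sup_t|e_{n,t}^\theta|\bigr)^2=o_p(n^{-2\alpha})$, and likewise $\qv_{B,K}(e_n^\lambda)_T=o_p(n^{-2\beta})$, using $B_n\Delta_n=T$ and the hypothesis on the edge effects. Cauchy--Schwarz then disposes of the three terms carrying an error process on at least one side: $\qv_{B,K}(M_n^\theta,e_n^\lambda)_T$, $\qv_{B,K}(e_n^\theta,M_n^\lambda)_T$ and $\qv_{B,K}(e_n^\theta,e_n^\lambda)_T$ are each $o_p(n^{-(\alpha+\beta)})$. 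The genuinely delicate terms are the four cross terms pairing a smooth integrated process with an error process --- $\qv_{B,K}(\Theta,M_n^\lambda)_T$, $\qv_{B,K}(M_n^\theta,\Lambda)_T$, $\qv_{B,K}(\Theta,e_n^\lambda)_T$, $\qv_{B,K}(e_n^\theta,\Lambda)_T$ --- since crude Cauchy--Schwarz only yields bounds of order $O_p(K_n\Delta_n\,n^{-\beta})$ and the like, which need not be $o_p((K_n\Delta_n)^2+n^{-(\alpha+\beta)})$. Here I would use the stochastic-Fubini identity behind~\eqref{eq::qv_heuristics1}, namely $\Theta_{(t_i,t_{i+K}]}-\Theta_{(t_{i-K},t_i]}=\int h_i\,d\theta^{(n)}$ with $h_i$ the tent function on $(t_{i-K},t_{i+K}]$ of height $K_n\Delta_n$ peaking at $t_i$, decompose $\theta^{(n)}$ into its predictable finite-variation and local-martingale parts, integrate by parts, and exploit the orthogonality of $M_n^\lambda$ to these parts together with the deterministic, mean-zero sign pattern of the $c_i$ across the $2K_n$ windows that cover a given time point; this is what produces the extra $(K_n\Delta_n)^{1/2}$ (equivalently $K_n^{-1/2}$) factor needed for negligibility. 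This last step is the main obstacle: it is a rate-level version of the martingale-orthogonality bookkeeping that powers the central limit theorem of Section~\ref{sec::normality}. Summing the two surviving contributions then gives the asserted expansion.
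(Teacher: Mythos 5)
Your overall strategy is the same as the paper's: the paper proves this result by reduction to \citet[Theorem~3, p.~208]{mykland2017assessment} with the error rates upgraded by Theorem~\ref{th::convergence.rates}, and its Appendix~\ref{app::QCV.proof1} records precisely your bilinear expansion — the $\qv(\Theta,\Lambda)$ term via the Integral-to-Spot Device, the $\qv(M_n^{\theta},M_n^{\lambda})$ term producing $2[M_n^{\theta},M_n^{\lambda}]$ through the $2K$-fold covering count, and all terms involving $e_n^{\theta},e_n^{\lambda}$ removed by Cauchy--Schwarz through the $R_{n,k}$ quantities in \eqref{eq::qv_before_rates}. Those parts of your proposal are in order (one small caveat: your claim that the boundary mass of $[M_n^{\theta},M_n^{\theta}]$ over windows of width $O(K_n\Delta_n)$ is $o_p(n^{-2\alpha})$ does not follow from Condition~\ref{cond::MGs} alone; the paper sidesteps this by stating the leading term as $[M_n^{\theta},M_n^{\lambda}]_{T-}$ in the appendix version).

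The genuine gap is the one you flag yourself and do not close: the cross terms $\qv_{B,K}(\Theta,M_n^{\lambda})_T$ and $\qv_{B,K}(M_n^{\theta},\Lambda)_T$. The mechanism you propose for them — integration by parts plus \emph{orthogonality of $M_n^{\lambda}$ to the martingale part of $\theta^{(n)}$} — is not available: no such orthogonality is assumed in Condition~\ref{cond::MGs}, and it fails in the motivating application, where correlation between the observation-time martingale and the volatility is exactly what one is trying to detect, so $[\theta^{(n)},M_n^{\lambda}]\neq 0$ in general. What actually controls these terms in the paper's (cited) argument is different: the systematic part of $\qv_{B,K}(\Theta,M_n^{\lambda})_T$ is identified exactly as $\Delta_n\int_0^{T-}\big(1-2\tfrac{s-t_{*}(s)}{\Delta_n}\big)\,\dd[\theta,M_n^{\lambda}]_s$, where the small factor $\Delta_n$ (rather than $K_n\Delta_n$) does come from the across-$l$ sign cancellation you mention, but the integrator is handled by the Kunita--Watanabe bound $[\theta,M_n^{\lambda}]=O_p(n^{-\beta})$, not by any orthogonality; and the fluctuation part is $O_p\big(n^{-\beta}(K_n\Delta_n)^{3/2}\big)$, which is exactly an application of Theorem~\ref{th::convergence.rates} to the pair $(\theta^{(n)},n^{\beta}M_n^{\lambda})$ with the tent functions $f^{(l,K)}$ and the sign functions $g^{(l,K)}$ — a tool already at your disposal that replaces the orthogonality sketch. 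Until this step is carried out (or the MZ2017 Theorem~3 computation is invoked), the proof is incomplete at precisely the point you call the main obstacle.
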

\begin{proof} The proof follows with trivial adjustments from \citet[Theorem~3, p.~208]{mykland2017assessment}. A brief sketch of the proof along with some remarks on the edge effects are given in Appendix~\ref{app::QCV.proof1}. 
\end{proof}
In Appendix~\ref{app::QCV.proof1} we also provide the conclusion of the above theorem with slightly more stringent restrictions on the edge effects. Corresponding results for all combinations of assumptions on the edge effects can be deduced from the results in Appendix~\ref{app::QCV.proof1}.  

We now turn to estimation of the quadratic covariation $[\theta,\lambda]$. As will become clear, how one ought to estimate $[\theta,\lambda]$ depends on the convergence rates of the error martingales $M_n^{\theta}$ and $M_n^{\lambda}$, that is the $\alpha$ and $\beta$ required for $n^{\alpha}M_n^{\theta}$ and $n^{\beta}M_n^{\lambda}$ to satisfy Condition~\ref{cond::MGs}. From the conclusion of Theorem~\ref{theorem::Th3.multivariate} we see that, provided $K_n\Delta_n$ is of order $n^{-\alpha\wedge\beta}$, then 
\begin{equation}
\frac{\qv_{B,K}(\widehat{\Theta},\widehat{\Lambda})_T}{(K_n\Delta_n)^2} 
= \frac{2[M_n^{\theta},M_n^{\lambda}]_{T}}{(K_n\Delta_n)^{2}} + \frac{2}{3}[\theta^{(n)},\lambda^{(n)}]_{T} + o_p(1).
\notag
\end{equation}
By Condition~\ref{cond::MGs} the quadratic covariation of the error martingales $[M_n^{\theta},M_n^{\lambda}]$ is $O_p(n^{-(\alpha + \beta)})$, consequently,
\begin{equation}
(K_n\Delta_n)^{-2}[M_n^{\theta},M_n^{\lambda}] = O_p( (K_n\Delta_n)^{-1} n^{- \alpha\vee\beta)}  ),
\notag
\end{equation} 
which tends to zero in probability as $n \to \infty$ provided $\alpha \neq \beta$. We summarise this in a lemma.

\begin{lemma}\label{lemma::consistency1} Assume that Condition~\ref{cond::MGs} holds. Suppose that $\alpha \neq \beta$, that $K_n \to \infty$ and $\Delta_n \to 0$ such that $K_n \Delta_n$ is of order $n^{-\alpha\wedge\beta}$ as $n \to \infty$, then 
\begin{equation}
\frac{3}{2}\frac{\qv_{B,K}(\widehat{\Theta},\widehat{\Lambda})_T}{(K_n\Delta_n)^2} = [\theta,\lambda]_{T-} + o_p(1). 
\label{eq::consistency1}
\end{equation} 
\end{lemma}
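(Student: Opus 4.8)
The plan is to read the claim off Theorem~\ref{theorem::Th3.multivariate} directly, combined with the order-of-magnitude bookkeeping in the two displays (and accompanying text) preceding the lemma; no genuinely new estimate is needed, only an assembly of the pieces under the rate balance in which $K_n\Delta_n$ has exact order $n^{-\alpha\wedge\beta}$, i.e.\ $(K_n\Delta_n)^{-2}\asymp n^{2(\alpha\wedge\beta)}$.

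First I would apply Theorem~\ref{theorem::Th3.multivariate} (with its edge-effect requirements understood to hold, as in the discussion above), which gives
\begin{equation}
\qv_{B,K}(\widehat{\Theta},\widehat{\Lambda})_T = 2[M_n^{\theta},M_n^{\lambda}]_{T} + \tfrac{2}{3}(K_n\Delta_n)^2[\theta^{(n)},\lambda^{(n)}]_{T} + o_p( (K_n\Delta_n)^{2} ) + o_p(n^{-(\alpha + \beta)}).
\notag
\end{equation}
Dividing by $(K_n\Delta_n)^2$, I would treat the four summands in turn. The $o_p((K_n\Delta_n)^2)$ term is $o_p(1)$ at once. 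The $o_p(n^{-(\alpha+\beta)})$ term becomes $o_p(n^{2(\alpha\wedge\beta)-(\alpha+\beta)})=o_p(n^{-|\alpha-\beta|})$, which is $o_p(1)$ precisely because $\alpha\ne\beta$. For the error-martingale term, Condition~\ref{cond::MGs} gives $[n^{\alpha}M_n^{\theta}]_T=O_p(1)$ and $[n^{\beta}M_n^{\lambda}]_T=O_p(1)$ (the P-UT property together with stable convergence forces the quadratic variations to converge in law to $[L^{\theta}]_T$ and $[L^{\lambda}]_T$), so by bilinearity and the Kunita--Watanabe inequality $[M_n^{\theta},M_n^{\lambda}]_T=n^{-(\alpha+\beta)}[n^{\alpha}M_n^{\theta},n^{\beta}M_n^{\lambda}]_T=O_p(n^{-(\alpha+\beta)})$; after scaling this is $O_p(n^{-|\alpha-\beta|})=o_p(1)$, again by $\alpha\ne\beta$. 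Finally, the assumed convergence in probability $[\theta^{(n)},\lambda^{(n)}]\to[\theta,\lambda]$ gives $\tfrac{2}{3}[\theta^{(n)},\lambda^{(n)}]_T\to\tfrac{2}{3}[\theta,\lambda]_{T-}$, the right-endpoint value being read as the left limit, consistently with the Integral-to-Spot Device. Collecting the terms, $(K_n\Delta_n)^{-2}\qv_{B,K}(\widehat{\Theta},\widehat{\Lambda})_T=\tfrac{2}{3}[\theta,\lambda]_{T-}+o_p(1)$, and multiplying by $3/2$ yields~\eqref{eq::consistency1}.

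The step needing genuine care — the closest thing to an obstacle — is the bound $[M_n^{\theta},M_n^{\lambda}]_T=O_p(n^{-(\alpha+\beta)})$: it rests on the convergence $[n^{\alpha}M_n^{\theta}]\Rightarrow[L^{\theta}]$ (and likewise in $\lambda$) supplied by the P-UT property plus stable convergence in Condition~\ref{cond::MGs}, via the standard continuity theorem for quadratic variations of P-UT sequences (see \citet[Ch.~VI]{\JS}), followed by the Kunita--Watanabe inequality. The other delicate point is pure bookkeeping: the rate balance $K_n\Delta_n\asymp n^{-\alpha\wedge\beta}$ is exactly what renders both the error-martingale term and the $o_p(n^{-(\alpha+\beta)})$ remainder negligible after division by $(K_n\Delta_n)^2$, and this is where $\alpha\ne\beta$ is indispensable — when $\alpha=\beta$ the error-martingale term is of the same order as the target $\tfrac{2}{3}[\theta,\lambda]_{T-}$ and one must instead resort to a two-scale correction of the type introduced in this section.
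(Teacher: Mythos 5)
Your proposal is correct and follows essentially the same route as the paper: apply Theorem~\ref{theorem::Th3.multivariate}, divide by $(K_n\Delta_n)^2$, note that Condition~\ref{cond::MGs} makes $[M_n^{\theta},M_n^{\lambda}]_T=O_p(n^{-(\alpha+\beta)})$, and use the rate balance $K_n\Delta_n\asymp n^{-\alpha\wedge\beta}$ together with $\alpha\neq\beta$ to kill the error-martingale and remainder terms. The only difference is that you spell out the justification (P-UT plus stable convergence, then Kunita--Watanabe) for the $O_p(n^{-(\alpha+\beta)})$ bound, which the paper simply asserts as a consequence of Condition~\ref{cond::MGs}.
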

\begin{proof} By Condition~\ref{cond::MGs} this is direct from the two displays above. 
\end{proof}
Notice that the conclusion of Lemma~\ref{lemma::consistency1} continues to hold when $\alpha = \beta$ provided $(K_n\Delta_n)^{-1}M_n^{\theta}$ and $(K_n\Delta_n)^{-1}M_n^{\lambda}$ are asymptotically orthogonal (see \citet[Proposition~I.4.15, p.~41]{\JS} for the notion of local martingales being orthogonal). Also note that when $\alpha = \beta$ one might choose $K_n\Delta_n$ such that $K_n\Delta_n n^{\alpha} \to \infty$, at the cost of a slower rate of convergence.

The estimation problem is harder when $(K_n\Delta_n)^{-2}[M_n^{\theta},M_n^{\lambda}]_T$ is not asymptotically negligible. This occurs, for example, when one seeks to estimate $[\theta,\theta]_T$, such that the convergence rates $\alpha$ and $\beta$ of Condition~\ref{cond::MGs} are equal. As an estimator of the quadratic covariation $[\theta,\lambda]_T$ in such situations we propose the {\it Two Scales Quadratic Covariation} ({\tsqc}) estimator. It is given by 
\begin{equation}
\tsqc_{B,K_1,K_2}(\widehat{\Theta}^n,\widehat{\Lambda}^n)_{T} 
= \frac{3}{2}\frac{\qv_{B,K_2}(\widehat{\Theta}^n,\widehat{\Lambda}^n) - \qv_{B,K_1}(\widehat{\Theta}^n,\widehat{\Lambda}^n)}{(K_{n,2}^2 - K_{n,1}^2)\Delta_n^2},
\label{eq::ts_thetalambdahat1}
\end{equation}
where $K_{n,2} > K_{n,1}$ are user specified sequences of integers (tuning parameters) tending to infinity. Since $K_{n,2}$ and $K_{n,1}$ must be of the same order, a natural choice is $K_{n,2} = \gamma K_{n,1}$ for some integer $\gamma \geq 2$, with $\gamma$ fixed and independent of $n$. We first present a consistency result, and then return to the central limit theory for this estimator at the end of Section~\ref{sec::normality}.  
\begin{corollary}\label{corr::TSQC_corollary}{\sc (Consistency of the TSQC-estimator)} Assume that the conditions of Theorem~\ref{theorem::Th3.multivariate} are in force, and that $\alpha = \beta$. Let $K_{n,2} = \gamma K_{n,1}$, for some fixed integer $\gamma \geq 2$, be positive integers tending to infinity such that $K_{n,1}\Delta_n = O(n^{-\alpha})$. Then, 
\begin{equation}
{\rm TSQC}_{B,K_1,K_2}(\widehat{\Theta}^n,\widehat{\Lambda}^n)_{T}  =  
[\theta,\lambda]_{T-} + o_p(1), 
\notag
\end{equation} 
as $n \to \infty$.
\end{corollary}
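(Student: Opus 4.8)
The plan is to reduce the statement to a linear combination of two instances of Theorem~\ref{theorem::Th3.multivariate} and to exploit a cancellation of the leading stochastic term. First I would write out, using Theorem~\ref{theorem::Th3.multivariate} with $K = K_{n,2}$ and then with $K = K_{n,1}$, the two expansions
\begin{equation}
\qv_{B,K_j}(\widehat{\Theta}^n,\widehat{\Lambda}^n)_T = 2[M_n^{\theta},M_n^{\lambda}]_T + \tfrac{2}{3}(K_{n,j}\Delta_n)^2[\theta^{(n)},\lambda^{(n)}]_T + o_p((K_{n,j}\Delta_n)^2) + o_p(n^{-2\alpha}),
\notag
\end{equation}
for $j=1,2$, valid since $K_{n,j}\Delta_n \to 0$ and $\alpha=\beta$. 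The key observation is that the leading term $2[M_n^{\theta},M_n^{\lambda}]_T$, which is $O_p(n^{-2\alpha})$ and is the obstruction to consistency when $\alpha=\beta$, does not depend on $j$, so it is annihilated when we form the difference $\qv_{B,K_2} - \qv_{B,K_1}$ that appears in the numerator of~\eqref{eq::ts_thetalambdahat1}. Subtracting the two displays gives
\begin{equation}
\qv_{B,K_2}(\widehat{\Theta}^n,\widehat{\Lambda}^n) - \qv_{B,K_1}(\widehat{\Theta}^n,\widehat{\Lambda}^n) = \tfrac{2}{3}(K_{n,2}^2 - K_{n,1}^2)\Delta_n^2\,[\theta^{(n)},\lambda^{(n)}]_T + o_p((K_{n,2}\Delta_n)^2) + o_p(n^{-2\alpha}).
\notag
\end{equation}

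Next I would divide by $(K_{n,2}^2 - K_{n,1}^2)\Delta_n^2$ and multiply by $3/2$, as in the definition of the {\tsqc} estimator. The main term becomes exactly $[\theta^{(n)},\lambda^{(n))}]_T$, so it remains to check that both error terms are $o_p(1)$ after this normalisation. With the choice $K_{n,2} = \gamma K_{n,1}$ for fixed $\gamma \geq 2$ we have $(K_{n,2}^2 - K_{n,1}^2)\Delta_n^2 = (\gamma^2-1)(K_{n,1}\Delta_n)^2$, which is of exact order $(K_{n,1}\Delta_n)^2$ and, by the assumption $K_{n,1}\Delta_n = O(n^{-\alpha})$, of order $n^{-2\alpha}$. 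Hence $o_p((K_{n,2}\Delta_n)^2)$ divided by a quantity of the same order is $o_p(1)$, and $o_p(n^{-2\alpha})$ divided by a quantity of order $n^{-2\alpha}$ is also $o_p(1)$. This yields ${\rm TSQC}_{B,K_1,K_2}(\widehat{\Theta}^n,\widehat{\Lambda}^n)_T = [\theta^{(n)},\lambda^{(n)}]_T + o_p(1)$.

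Finally I would invoke the convergence-in-probability hypothesis from the beginning of Section~\ref{sec::consistency}, namely that $[\theta^{(n)},\lambda^{(n)}]$ converges in probability to $[\theta,\lambda]$ (and the fact that $[\theta,\lambda]_T = [\theta,\lambda]_{T-}$ since $\theta,\lambda$ are continuous at the deterministic endpoint $T$ up to a null set, or simply by quoting the $T-$ convention already used in Lemma~\ref{lemma::consistency1}), to replace $[\theta^{(n)},\lambda^{(n)}]_T$ by $[\theta,\lambda]_{T-}$ at the cost of a further $o_p(1)$. The only point requiring a little care — the ``main obstacle'', such as it is — is bookkeeping the edge-effect hypothesis: Theorem~\ref{theorem::Th3.multivariate} requires $e^\theta$ and $e^\lambda$ to be $o_p((K_n\Delta_n)^{1/2}n^{-\alpha})$ with $K_n = K_{n,j}$, and one must confirm this holds simultaneously for both $j=1$ and $j=2$; since $K_{n,1}$ and $K_{n,2}$ differ only by the fixed factor $\gamma$, a bound of the stated form for one index is equivalent to the bound for the other, so the hypothesis ``the conditions of Theorem~\ref{theorem::Th3.multivariate} are in force'' suffices. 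There is no genuinely hard step here; the corollary is essentially an algebraic consequence of Theorem~\ref{theorem::Th3.multivariate} combined with the $j$-independence of the leading martingale term.
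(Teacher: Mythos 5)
Your proposal is correct and follows essentially the same route as the paper: both proofs rest on the fact that the $K$-independent term $2[M_n^{\theta},M_n^{\lambda}]_T$ cancels exactly in the two-scale difference while the $\tfrac{2}{3}(K_{n,j}\Delta_n)^2[\theta^{(n)},\lambda^{(n)}]$ term survives and is recovered by the $(\gamma^2-1)K_{n,1}^2\Delta_n^2$ normalisation, with the final step (as in the paper's own last line) reading $K_{n,1}\Delta_n = O(n^{-\alpha})$ as exact order so that the $o_p(n^{-2\alpha})$ remainder is killed. The only cosmetic difference is that you invoke the displayed conclusion of Theorem~\ref{theorem::Th3.multivariate} twice and subtract, whereas the paper re-expands the $\qv(M_n^{\theta},M_n^{\lambda})$ and $\qv(\Theta,\Lambda)$ pieces separately (the latter via the $f_s^{(l,K)}$ representation and Theorem~7 of the supplement), which is the same cancellation argument carried out one level deeper.
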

\begin{proof} From Theorem~\ref{theorem::Th3.multivariate} we have that for $j =1,2$
\begin{equation}
\frac{\qv_{B,K_j}(M_n^{\theta},M_n^{\lambda})_T}{(\gamma^2 - 1)K_{n,1}^2\Delta_n^2} = 2[n^{\alpha}M_n^{\theta},n^{\alpha}M_n^{\lambda}]_{T-}
+ O_p(K_{n,j}\Delta_n),
\notag
\end{equation}
so when $K_{n,2} = \gamma K_{n,1}$ for some $\gamma \geq 2$, then 
\begin{equation}
\frac{\qv_{B,K_2}(M_n^{\theta},M_n^{\lambda})_T - \qv_{B,K_1}(M_n^{\theta},M_n^{\lambda})_T}{(\gamma^2-1)K_{n,1}^2\Delta_n^2} = o_p(1).
\notag
\end{equation}
On the other hand, 
\begin{equation}
\frac{\qv_{B,K_2}(\Theta,\Lambda)_T}{(\gamma^2 - 1)K_{n,1}^2 \Delta_n^2} = \frac{\gamma^2}{(\gamma^2 - 1)K_{n,2}} \sum_{l=1}^{2K_2}\sum_{i \equiv l[2K_2]}\int_{t_{i-K_2}}^{t_{i+K_2}}f_s^{(l,K_2)}\,\dd \theta_s\int_{t_{i-K_2}}^{t_{i+K_2}}f_s^{(l,K_2)}\,\dd \lambda_s,
\notag
\end{equation}
so that by \citet[Theorem~7, p.~1]{mykland2017assessmentsupplement} 
\begin{equation}
\begin{split}
&\frac{\qv_{B,K_2}(\Theta,\Lambda)_T - \qv_{B,K_1}(\Theta,\Lambda)_T}{(\gamma^2 - 1)K_{n,1}^2 \Delta_n^2}\\ 
& \qquad\qquad  = \frac{\gamma^2}{(\gamma^2 - 1)K_{n,2}} \sum_{l=1}^{2K_2} \int_{0}^{T-} (f_s^{(l,K_2)})^2\,\dd [\theta,\lambda]_s\\
& \qquad \qquad \qquad \qquad - \frac{1}{(\gamma^2 - 1)K_{n,1}} \sum_{l=1}^{2K_1} \int_{0}^{T-} (f_s^{(l,K_1)})^2\,\dd [\theta,\lambda]_s
+ O_p((K_{n,1}\Delta_n)^{1/2}).
\end{split}
\notag
\end{equation}
Thus $\tsqc_{B,K_1,K_2}(\widehat{\Theta}^n,\widehat{\Lambda}^n)_{T}  = [\theta^{(n)},\lambda^{(n)}]_{T-} + ((\gamma^2 - 1)K_{n,1}^2\Delta_n^2)^{-1}o_{p}(n^{-2\alpha}) + o_{p}(1)$ as $K_{n,1}\Delta_n \to 0$ with $K_{n,1}\to \infty$, and the result follows because $K_{n,1}\Delta_n$ is of order $n^{-\alpha}$.
\end{proof}
\begin{remark} The conclusion of Corollary~\ref{corr::TSQC_corollary} is still valid when $\alpha \neq \beta$ provided $K_{n,1}\Delta_n = O(n^{-\alpha\wedge\beta})$. But if the convergence rates are known and different one would, as already mentioned, rather use the estimator in~\eqref{eq::consistency1}. There might be situations, however, where the convergence rates $\alpha$ and $\beta$ are not known exactly, but known to lie in some interval, say $\alpha,\beta\in[r_1,r_2]$. In that case, one sets $K_{n,1}\Delta_n = O(n^{-r_1})$, and the conclusion of Corollary~\ref{corr::TSQC_corollary} holds.     
\end{remark}

\section{Central limit theory}\label{sec::normality} The consistency results of the previous section are extensions of theory developed in \citet{mykland2017assessment,mykland2017assessmentsupplement}. That paper, however, did not establish limiting normality for the estimators presented, and it is to this topic we now turn. 

In a first part we present a theorem on the convergence rate of triangular array rolling quadratic covariations as approximations to quadratic covariations of spot processes. We then present the central limit theorem for such approximations. Both these results supplement the consistency result of~\citet[Theorem~7, p.~1]{mykland2017assessmentsupplement}. The proofs of both these theorems are deferred to the appendix. As an example of the use of this theorem, and to show its versatility, we show how it can be applied to a novel estimator of the leverage effect. In Section~\ref{subsec::tsqc_theory} we present theory for the {\tsqc}-estimator. In particular, we show that the observed asymptotic variance of~\citet{mykland2017assessment} can be applied to estimate the asymptotic variance of this estimator. This is important because analytical expressions for the {\tsqc} are hard to derive (see the discussion in ~\citet[pp.~198--200]{mykland2017assessment}). 
 
\subsection{Convergence rate and CLT for rolling quadratic variations}
Introduce the processes 
\begin{equation}
\alpha_{t}^{(l,n)} = \int_0^t f_{s-}^{(l,n)}\,\dd \alpha_s^{(n)},
\mbox{ and } \beta_{t}^{(l,n)} = \int_0^t g_{s-}^{(l,n)}\,\dd \beta_s^{(n)}, \quad {\rm for}\quad l = 1,\ldots,2K_n, 
\label{eq::alpha.beta.ln}
\end{equation}
where $\alpha_t^{(n)}$ and $\beta_t^{(n)}$ are sequences of semimartingales, and $f_{t}^{(l,n)}$ and $g_{t}^{(l,n)}$ are deterministic c{\`a}dl{\`a}g functions bounded by $1$ (there is nothing special about $1$ here, and it suffices that they are bounded by a constant). We denote by $\mathbb{F}$ a countable collection $f_{\cdot}^{(l,n)}\,l=1,\ldots,2K_n,\;n=1,2,\ldots$, of such functions, such as from~\eqref{eq::f_func1}, but  more generally to be defined in each case, and similarly $g_{\cdot}^{(l,n)}$ belongs to the collection $\mathbb{G}$ (see Appendix~\ref{app::appendixA} for further details). In \citet[Theorem 7, p.~1]{mykland2017assessmentsupplement} it was shown that  
\begin{equation}
\frac{1}{2K}\sum_{l=1}^{2K}\sum_{i\equiv l[2K]}(\alpha_{t_{i+K}}^{(l,n)} - \alpha_{t_{i-K}}^{(l,n)})(\beta_{t_{i+K}}^{(l,n)} - \beta_{t_{i-K}}^{(l,n)})
= \frac{1}{2K}\sum_{l=1}^{2K}[\alpha^{(l,n)},\beta^{(l,n)}]_{T-} + o_p(1).
\label{eq::key_approx}
\end{equation}
In this section we study the rate of convergence and present a central limit theorem for the approximation in~\eqref{eq::key_approx}. Such statements will help with the assessment of the accuracy and with optimal calibration of the TSQC-estimators, as well as other rolling intervals estimators that depend on approximations such as the one in~\eqref{eq::key_approx}.

\begin{theorem}\label{th::convergence.rates}{\sc (Rate of convergence).} Suppose that $\alpha_t^{(n)}$ and $\beta_t^{(n)}$ satisfy Conditions~\ref{cond:modified-semimgs}-\ref{cond:rate} in  Appendix~\ref{app::appendixA}, that $f_{\cdot}^{(l,n)} \in \mathbb{F}$ and $g_{\cdot}^{(l,n)} \in \mathbb{G}$, and that $\alpha_{t}^{(l,n)}$ and $\beta_{t}^{(l,n)}$ are as defined in~\eqref{eq::alpha.beta.ln}. Then 
\begin{equation}
\frac{1}{2K_n}\sum_{l=1}^{2K_n}\sum_{i \equiv l[2K_n]} (\alpha_{t_{i+K}}^{(l,n)} - \alpha_{t_{i-K}}^{(l,n)})
(\beta_{t_{i+K}}^{(l,n)} - \beta_{t_{i-K}}^{(l,n)})
= \frac{1}{2K_n}\sum_{l=1}^{2K_n}[\alpha^{(l,n)},\beta^{(l,n)}]_{T-} + O_p\big( (K_n\Delta_n)^{1/2} \big).
\notag
\end{equation} 
\end{theorem}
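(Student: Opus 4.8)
The plan is to reduce the rate statement to a martingale estimation problem after subtracting off the predictable compensators of each product $(\alpha^{(l,n)}_{t_{i+K}}-\alpha^{(l,n)}_{t_{i-K}})(\beta^{(l,n)}_{t_{i+K}}-\beta^{(l,n)}_{t_{i-K}})$ over each rolling block. Concretely, writing $\alpha^{(l,n)}$ and $\beta^{(l,n)}$ via their canonical decompositions into drift, continuous martingale, and jump parts, the product of the two block-increments splits into the block-increment of the quadratic covariation $[\alpha^{(l,n)},\beta^{(l,n)}]$ over $(t_{i-K},t_{i+K}]$, plus cross terms. The sum of the $[\alpha^{(l,n)},\beta^{(l,n)}]$ block-increments telescopes (since the blocks with $i\equiv l[2K]$ tile $[0,T]$) to reproduce exactly $\frac{1}{2K}\sum_l [\alpha^{(l,n)},\beta^{(l,n)}]_{T-}$, which is the leading term on the right. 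So the whole content of the theorem is to show that everything else — the drift$\times$drift, drift$\times$martingale, and martingale$\times$martingale (minus bracket) contributions — is $O_p((K_n\Delta_n)^{1/2})$.

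The key steps, in order, would be: (i) Use the boundedness of $f^{(l,n)},g^{(l,n)}$ by a constant and Conditions~\ref{cond:modified-semimgs}--\ref{cond:rate} to control the drift and jump parts of $\alpha^{(n)},\beta^{(n)}$ uniformly; the drift contributions to each block-increment are $O(K_n\Delta_n)$ in an appropriate norm, and there are $O(B_n/(2K_n))$ blocks per residue class $l$ and $2K_n$ classes, so after the $1/(2K_n)$ normalisation the pure-drift piece is $O_p(K_n\Delta_n)$, which is $o_p((K_n\Delta_n)^{1/2})$. (ii) For the martingale$\times$martingale minus bracket terms, observe that on each block $(t_{i-K},t_{i+K}]$ the quantity $(\Delta_i M^\alpha)(\Delta_i M^\beta)-[\cdot,\cdot]$-block-increment is a martingale increment in $i$ along the block partition (within each residue class the blocks are disjoint and ordered), so summing over $i\equiv l[2K]$ gives a martingale; its predictable quadratic variation is, by the usual moment bounds for products of martingale increments over small intervals (Burkholder--Davis--Gundy plus the continuity/jump control from the conditions), of order $\sum_{\text{blocks}} (K_n\Delta_n)^2 = (B_n/(2K_n))(K_n\Delta_n)^2 = O(T K_n\Delta_n)$. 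Dividing by $2K_n$ and taking the square root, the $L^2$-size of this piece is $O((K_n\Delta_n)^{1/2})$ uniformly in $n$, hence $O_p((K_n\Delta_n)^{1/2})$. (iii) The drift$\times$martingale cross terms are handled the same way — they are martingale sums with predictable quadratic variation bounded by (drift size)$^2\times$(martingale size)$^2$ summed over blocks, which is even smaller. Summing the three pieces and the residual from (i) gives the claimed $O_p((K_n\Delta_n)^{1/2})$.

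I expect the main obstacle to be the bookkeeping for the martingale$\times$martingale term: one must be careful that, after subtracting the bracket, the per-block terms genuinely form a martingale difference sequence in $i$ within each residue class (this uses that the block $(t_{i-K},t_{i+K}]$ is $\Falg_{t_{i-K}}$-adapted at its left endpoint and that consecutive blocks in a fixed residue class are disjoint), and then that one has clean $L^2$ (or $L^1$) bounds of the right order on $\E[\,((\Delta_i M^\alpha)(\Delta_i M^\beta) - \Delta_i[M^\alpha,M^\beta])^2 \mid \Falg_{t_{i-K}}]$. These bounds need the structural assumptions on $\alpha^{(n)},\beta^{(n)}$ in Conditions~\ref{cond:modified-semimgs}--\ref{cond:rate} (locally bounded characteristics, control of the continuous-martingale part and of the jumps, together with P-UT-type uniformity so the constants do not blow up with $n$); translating those conditions into the moment estimate of order $(K_n\Delta_n)^2$ per block is where the real work sits. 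Everything else — the telescoping of the bracket sum, the counting of blocks, and the drift estimates — is routine given the boundedness of $\mathbb{F}$ and $\mathbb{G}$ and the regularity conditions.
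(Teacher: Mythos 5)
Your proposal follows essentially the same route as the paper's proof: subtract the quadratic covariation so that the error becomes a (localized) martingale, bound its predictable quadratic variation by $O(K_n\Delta_n)$ via per-block contributions of order $(K_n\Delta_n)^2$ times $B_n/(2K_n)$ blocks per residue class, and take square roots — the paper implements this with the interpolated continuous-time martingale $Z_n$, It{\^o}'s formula for the cross-brackets $\langle Z_{n,l_1},Z_{n,l_2}\rangle$, localization by stopping times bounding the spot (co)variations, and Lenglart's inequality. The one point to watch is the average over the $2K_n$ residue classes: their blocks overlap, so the classes are not orthogonal and the $1/(2K_n)$ normalisation buys no extra factor (and the drift$\times$martingale cross terms are not literally martingale sums, though Cauchy--Schwarz gives the same order); the claimed $O_p\big((K_n\Delta_n)^{1/2}\big)$ still follows because the per-class bound is uniform in $l$, exactly as the paper's double sum over $(l_1,l_2)$ shows.
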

\begin{proof} See Appendix~\ref{app::conv.rates}. 
\end{proof}
Let the error term in the approximation in~\eqref{eq::key_approx} be
\begin{equation}
\begin{split}
Z_{n}(t) & = \frac{1}{2K}\sum_{l=1}^{2K}\big\{\sum_{t_{i+K}\leq t,\,i\equiv l[2K]}(\alpha_{t_{i+K}}^{(l,n)} - \alpha_{t_{i-K}}^{(l,n)})(\beta_{t_{i+K}}^{(l,n)} - \beta_{t_{i-K}}^{(l,n)}) \\
& \qquad \qquad \qquad \qquad + (\alpha_{t}^{(l,n)} - \alpha_{t_{*,l}}^{(l,n)})
(\beta_{t}^{(l,n)} - \beta_{t_{*,l}}^{(l,n)})
- [\alpha^{(l,n)},\beta^{(l,n)}]_{t}\big\}.
\end{split}
\label{eq::CLT_MG}
\end{equation}
Notice that $Z_{n}(t)$ is interpolated into a continuous time martingale. The errors are only defined at discrete times, but the interpolation error is asymptotically negligible, and consequently we only need to prove the central limit theorem for the interpolated process, which will be done by applying the general central limit theorem, Theorem~\ref{theorem::th2.28_general}, that is contained in Appendix~\ref{app::stable_clt_jumps}. 

For the notion of an $\Falg$-conditional Gaussian martingale, see \citet[Definition~II.7.4, p.~129]{jacod2003limit}, or \citet[p.~233]{jacod1997continuous}. Define 
\begin{equation}
t_{*,l} = \max\{t_{i+K} : t_{i+K}, i \equiv l[2K]\}.
\notag
\end{equation}
We write $\nu_{\alpha}^n$ for the compensator of the jump process $\mu_{\alpha}$ associated with a sequence (in $n$) of semimartingale process $\alpha^{(n)}$ (see \citet[Ch.~II.1]{\JS}). We can now state the main result of the paper.   

\begin{theorem}\label{theorem::clt}{\sc (CLT for triangular array rolling quadratic variations).} Suppose that Conditions~\ref{cond:modified-semimgs}--\ref{cond::clt2} in Appendix~\ref{app::appendixA} hold; that $\dd \langle \alpha^{(n)},\alpha^{(n)}\rangle_t/\dd t$, $\dd \langle \beta^{(n)},\beta^{(n)}\rangle_t/\dd t$, and $\dd \langle \alpha^{(n)},\beta^{(n)}\rangle_t/\dd t$ are locally continuous in mean square; and that for all $\eps > 0$, the Lindeberg conditon
\begin{equation}
\int_{\abs{x} > \eps} x^2 \nu_{\alpha}^n([0,T]\times \dd x) \overset{p}\to 0,\quad \text{and}\quad 
\int_{\abs{x} > \eps} x^2 \nu_{\beta}^n([0,T]\times \dd x) \overset{p}\to 0, 
\label{eq::spot_lindeberg}
\end{equation}
as $n \to \infty$ is satisfied for both processes. 
Set
\begin{equation}
\begin{split}
\kappa_s^{(n)} & =  \frac{1}{4 K^3 \Delta_n}\sum_{l_1=1}^{2K}\sum_{l_2=1}^{2K}
\int_{ t_{*,l_1}\vee t_{*,l_2} }^{s} \bigg\{\,f_u^{(l_1,n)}f_u^{(l_2,n)}g_s^{(l_1,n)}g_s^{(l_2,n)}\,\dd\langle\alpha^{(n)},\alpha^{(n)}\rangle_u\,\frac{\dd \langle\beta^{(n)},\beta^{(n)}\rangle_s}{\dd s}\,[2]\\
& \qquad\qquad \qquad\qquad + (f_u^{(l_1,n)}g_u^{(l_2,n)}) (g_{s}^{(l_1,n)}f_{s}^{(l_2,n)})\,\dd \langle \alpha^{(n)},\beta^{(n)} \rangle_{u} \frac{\dd\langle\beta^{(n)},\alpha^{(n)} \rangle_{s}}{\dd s}\,[2]\bigg\},\end{split}
\notag
\end{equation}
and assume that there is a $\Falg$-measurable process $\kappa_s$ for which
\begin{equation}
\int_0^t \kappa_s^{(n)}\,\dd s \overset{p}\to \int_0^t \kappa_s\,\dd s, \quad \text{for each $t \in [0,T]$}.
\notag
\end{equation}
Then $(K_n\Delta_n)^{-1/2}Z_{n}$ converges stably in law to an $\Falg$-conditional Gaussian martingale $\mathscr{Z}$ with quadratic variation 
\begin{equation}
\langle \mathscr{Z},\mathscr{Z}\rangle_t = \int_0^t \kappa_s\,\dd s.
\notag
\end{equation} 
\end{theorem}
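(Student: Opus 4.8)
The plan is to recognise $(K_n\Delta_n)^{-1/2}Z_n$ as a (interpolated) sequence of c\`adl\`ag martingales and invoke the general stable CLT of Appendix~\ref{app::stable_clt_jumps} (Theorem~\ref{theorem::th2.28_general}), so the task reduces to three verifications: that $Z_n$ really is a martingale (up to asymptotically negligible interpolation error), that its predictable quadratic variation, suitably rescaled, converges in probability to $\int_0^t\kappa_s\,\dd s$, and that the jumps are asymptotically negligible (the Lindeberg/nesting condition), plus the $\Falg$-conditional Gaussianity (asymptotic orthogonality to the underlying martingales). First I would fix $l$ and expand, over each block $i\equiv l[2K_n]$, the product $(\alpha^{(l,n)}_{t_{i+K}}-\alpha^{(l,n)}_{t_{i-K}})(\beta^{(l,n)}_{t_{i+K}}-\beta^{(l,n)}_{t_{i-K}})-[\alpha^{(l,n)},\beta^{(l,n)}]$ over that block using the integration-by-parts / It\^o product rule, writing it as a sum of two stochastic integrals of the form $\int (\beta^{(l,n)}_{u-}-\beta^{(l,n)}_{t_{i-K}})\,\dd\alpha^{(l,n)}_u$ and its mirror image. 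Summing over $i$ and over $l$, and accounting for the terminal partial block ending at $t$, this exhibits $Z_n(t)$ as $\int_0^t H_s^{(n)}\,\dd\alpha^{(n)}_s + \int_0^t \tilde H_s^{(n)}\,\dd\beta^{(n)}_s$ for explicit predictable integrands $H^{(n)},\tilde H^{(n)}$ built out of the $f^{(l,n)}, g^{(l,n)}$ and the running increments of $\alpha^{(l,n)},\beta^{(l,n)}$; in particular $Z_n$ is a local martingale, and the interpolation to continuous time is harmless because within a single block the omitted increment is $O_p((K_n\Delta_n)^{1/2})$ times a factor that vanishes after normalisation.

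Next I would compute the predictable quadratic variation $\langle Z_n,Z_n\rangle_t$. From the representation above, $\dd\langle Z_n,Z_n\rangle_s$ is a quadratic form in $\dd\langle\alpha^{(n)},\alpha^{(n)}\rangle_s$, $\dd\langle\beta^{(n)},\beta^{(n)}\rangle_s$ and $\dd\langle\alpha^{(n)},\beta^{(n)}\rangle_s$ with coefficients that are products of the running increments $\big(\alpha^{(l,n)}_{s-}-\alpha^{(l,n)}_{t_{*,l}}\big)$, $\big(\beta^{(l,n)}_{s-}-\beta^{(l,n)}_{t_{*,l}}\big)$ against $f^{(l,n)}, g^{(l,n)}$, summed over pairs $(l_1,l_2)$. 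The key step is then to show that, after dividing by $K_n\Delta_n$ and using the local mean-square continuity of the three angle-bracket densities, one may replace those densities on $[t_{*,l}, s]$ by their value at $s$ and replace the products of running increments by their conditional expectations, which, since within a block $\alpha^{(l,n)}$ behaves like a continuous martingale with known angle bracket, produces exactly the integrals $\int_{t_{*,l_1}\vee t_{*,l_2}}^{s}(\cdots)\,\dd\langle\cdot,\cdot\rangle_u$ appearing in the definition of $\kappa_s^{(n)}$; the combinatorial bookkeeping over the $2K_n$ phases and the $[2]$ symmetrisation factors is what yields the stated formula, and the hypothesis $\int_0^t\kappa^{(n)}_s\,\dd s\overset{p}\to\int_0^t\kappa_s\,\dd s$ closes this step. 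Here Conditions~\ref{cond:modified-semimgs}--\ref{cond::clt2} and the local mean-square continuity are used to control the difference between $\langle Z_n,Z_n\rangle$ and $\int_0^\cdot\kappa^{(n)}_s\,\dd s$ in probability.

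Finally, the jump part: the jumps of $Z_n$ come from the jumps of $\alpha^{(n)}$ and $\beta^{(n)}$ (weighted by bounded $f,g$ and by running increments which are themselves $O_p((K_n\Delta_n)^{1/2})$), so after normalisation the jump sizes are $O_p(1)$ times a jump of $\alpha^{(n)}$ or $\beta^{(n)}$ times a factor that is $O_p(1)$; the Lindeberg condition~\eqref{eq::spot_lindeberg} on $\nu_\alpha^n,\nu_\beta^n$ then gives $\sum_{s\le t}\Delta Z_n(s)^2 I\{|\Delta Z_n(s)|>\eps\}\overset{p}\to 0$, which is the Lindeberg hypothesis of Theorem~\ref{theorem::th2.28_general}. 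Asymptotic $\Falg$-conditional Gaussianity and stability follow from checking that $[Z_n, N]\overset{p}\to 0$ for $N$ ranging over a martingale basis of the filtration (bounded martingales orthogonal to $\alpha^{(n)},\beta^{(n)}$, and the continuous martingale parts of $\alpha^{(n)},\beta^{(n)}$ themselves) — the former because $H^{(n)},\tilde H^{(n)}$ are driven only by $\alpha^{(n)},\beta^{(n)}$, the latter because the running increments shrink, so the relevant covariation is $o_p(K_n\Delta_n)$ — together with the fact that the limiting bracket $\int_0^t\kappa_s\,\dd s$ is $\Falg$-measurable. I expect the main obstacle to be the middle step: rigorously justifying the replacement of the angle-bracket densities by their endpoint values and the products of running martingale increments by the deterministic block-integrals, uniformly well enough that the error is $o_p(K_n\Delta_n)$ after summing $O(K_n^2/\Delta_n)$-many block contributions — this is where local mean-square continuity, the $P$-UT/tightness structure from Conditions~\ref{cond:modified-semimgs}--\ref{cond::clt2}, and a careful Riemann-sum argument over the overlapping rolling blocks all have to be combined.
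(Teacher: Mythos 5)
Your proposal follows essentially the same route as the paper's proof in Appendix~\ref{app::CLT_proof}: write $Z_{n,l}$ via It{\^o} as stochastic integrals of running increments against $\alpha^{(l,n)},\beta^{(l,n)}$, then verify conditions (i)--(iii) of Theorem~\ref{theorem::th2.28_general} — computing $\langle Z_n,Z_n\rangle$ by freezing the bracket densities through local mean-square continuity so that it equals $(K_n\Delta_n)\int_0^T\kappa_s^{(n)}\,\dd s+o_p(K_n\Delta_n)$, transferring the Lindeberg condition~\eqref{eq::spot_lindeberg} to the jumps of $Z_n$ (the paper does this via Condition~(ii)$^{\prime\prime}$ of Corollary~\ref{lemma::condition_ii}, together with a P-UT argument), and checking asymptotic orthogonality to a countable martingale basis via Kunita--Watanabe bounds. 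The step you flag as the main obstacle is indeed where the paper expends most effort (Lipschitz antiderivatives $G^{(l_1,l_2)}$, localisation by stopping times, Chebyshev/Lenglart bounds), but your plan is the correct one.
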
 

\begin{proof} See Appendix~\ref{app::CLT_proof}.
\end{proof}
The notation {``}$[2]${''} means that we sum over two terms, the one given and the corresponding one where, in $\kappa^{(n)}$, $f$ and $g$ have changed place. As an example, $a_1b_2 + a_2b_1 = a_1b_2[2]$. The meaning of the notation will be clear from the context. The notion of being locally continuous in mean square is defined in Definition~\ref{assumption3} in Appendix~\ref{cond:modified-semimgs}. 
Before we proceed to Section~\ref{sec::volatility_and_intensity}, and the application that motivated the present study, we showcase the applicability of Theorem~\ref{theorem::clt} by considering the problem of leverage effect estimation.
\begin{example}\label{example::Leverage_effect_estimation}{\sc (Leverage effect estimation)}. 
Estimators of the leverage effect have been studied previously by \citet{wang2014estimation}, \citet{kalnina2017nonparametric}, \citet{ait2017estimation}, to mention some. In this example we introduce a rolling intervals estimator of the leverage effect, and show how Theorem~\ref{theorem::clt} can be used to derive the limit distribution of this estimator. We limit ourselves to the following simple model. Assume that the process $X_t = X_0 + \int_0^t \sigma_s\,\dd W_s$ is observed at the discrete and equidistant times $0 = t_{0,n} < t_{1,n} < \cdots < t_{n-1,n} < t_{n,n} = T$, and that there is no microstructure noise; $W_t$ a one dimensional Wiener process, and $\sigma^2$ is a locally bounded It{\^o} process which may or may not be correlated with $W_t$. The leverage effect is the spot process $\dd [\sigma^2,X]_t/\dd t$. A natural estimator of $[X,X]_T$ is the realised volatility (see the references in the Introduction),
\begin{equation}
\widehat{\Theta}_T^n = \sum_{t_{i+1,n}\leq T}  (X_{t_{i+1,n}} - X_{t_{i,n}})^2,
\notag
\end{equation}
and define $M_{n,t}$ via $\widehat{\Theta} = [X,X]_t + M_{n,t}$. It can then be shown that $n^{1/2}M_{n}$ converges stably in law to a normal distribution with (random) variance $2T \int_0^T \sigma_s^4\,\dd s$ \citep[Corollary~2.30, p.~154]{myklandzhang2012}, hence $n^{1/2}M_n$ satisfies Condition~\ref{cond::MGs}. In analogy with~\eqref{eq::QV.general}, consider 
\begin{equation}
\qv_{B,K}(\widehat{\Theta}^n,X)_T = \frac{1}{K_n}\sum_{i=K_n}^{B_n-K_n}
(\widehat{\Theta}^n_{(t_{i,n},t_{i+K,n}]} - \widehat{\Theta}^n_{(t_{i-K,n},t_{i,n}]}) 
(X_{t_{i+K,n}} - X_{t_{i-K,n}}),
\label{eq::leverage_effect_estimator}
\end{equation} 
where, due to the equidistant sampling times, we take $B_n = n$. It then follows from Lemma~\ref{lemma::consistency1} that 
\begin{equation}
(K_n\Delta_n)^{-1}\qv_{B,K}(\widehat{\Theta}^n,X)_T = [\sigma^2,X]_T + o_p(1). 
\notag
\end{equation}
To sketch the application of the central limit theorem of this paper, note that we may write $\qv_{B,K}(\widehat{\Theta}^n,X)_T = \qv_{B,K}(\Theta,X)_T + \qv_{B,K}(M_n,X)_T$. Let $f_s^{(l,n)}$ be as defined in \eqref{eq::f_func1}, and introduce 
\begin{equation}
g_{s}^{(l,n)}  = \sum_{K \leq i \leq B - K,\,i \equiv l[2K]} ( I\{t_i \leq s < t_{i+K}\} - I\{t_{i-K} \leq s < t_{i}\}   ),
\label{eq::g_func1}
\end{equation}
for $l = 1,\ldots,2K$. Let $M_{t}^{(l,n)} = \int_{0}^{t} \bar{g}_s^{(l,n)}\,\dd M_{n,s}$, and define the two continuous time martingales 
\begin{equation}
\begin{split}
U_{n,l}(t) & = \sum_{t_{i+K}\leq t \,:\, i\equiv l[2K]} (\theta_{t_{i+K}}^{(l,n)} - \theta_{t_{i-K}}^{(l,n)})(X_{t_{i+K}} - X_{t_{i-K}})\\
& \qquad \qquad \qquad \qquad + (\theta_{t}^{(l,n)} - \theta_{t_{*,l}}^{(l,n)})(X_{t} - X_{t_{*,l}})
- [\theta^{(l,n)},X]_t,
\end{split}
\notag
\end{equation}
and 
\begin{equation}
\begin{split}
V_{n,l}(t) & = \sum_{t_{i+K}\leq t \,:\, i\equiv l[2K]}(M_{t_{i+K}}^{(l,n)} -M_{t_{i-K}}^{(l,n)})(X_{t_{i+K}} - X_{t_{i-K}})\\ 
& \qquad \qquad \qquad \qquad+ 
(M_{t}^{(l,n)} -M_{t_{*,l}}^{(l,n)})(X_{t} - X_{t_{*,l}})  - [M^{(l,n)},X]_{t}.
\end{split}
\notag
\end{equation}
Then $(K\Delta_n)^{-1}\qv_{n,K}(\widehat{\Theta}^n,X) - K^{-1}\sum_{l=1}^{2K} [\theta^{(l,n)},X]_T$ is asymptotically equivalent to $Z_n = K^{-1}\sum_{l=1}^{2K}\{U_{n,l}(T) + V_{n,l}(T) \}$. The predictable quadratic variation of $Z_{n}$ is 
\begin{equation}
\begin{split}
\langle Z_{n},Z_{n}\rangle_{T} & = 
\frac{1}{K^2} \sum_{l_1=1}^{2K}\sum_{l_2=1}^{2K}\langle U_{n,l_1} + V_{n,l_1},U_{n,l_2} + V_{n,l_2} \rangle_{T}\\
& = \frac{1}{K^2} \sum_{l_1=1}^{2K}\sum_{l_2=1}^{2K}\{\langle U_{n,l_1},U_{n,l_2}\rangle_{T}
 + \langle V_{n,l_1},V_{n,l_2}\rangle_{T}
 + 2\langle U_{n,l_1},V_{n,l_2}\rangle_{T}\}.
 \end{split}
\notag
\end{equation}
Provided that the processes involved satisfy the assumptions of Theorem~\ref{theorem::clt}, we see how the development so far leads to a central limit theorem for the leverage effect estimator of~\eqref{eq::leverage_effect_estimator}. In particular, $(K_n\Delta_n)^{-1/2}Z_n$ converges stably in law to a Gaussian martingale with (random) asymptotic variance of the form $\int_{0}^T (a_s + b_s + 2c_s)\,\dd s$. How this leverage effect estimator generalises to more complicated data structures, i.e.~non-equidistant sampling times, microstructure noise, and edge effects, is a topic we plan to explore in a subsequent paper. 
\end{example}

\subsection{Uncertainty of the {\tsqc}}\label{subsec::tsqc_theory} To compute the uncertainty associated with the {\tsqc} estimator we use the {\it observed asymptotic variance} of \citet{mykland2017assessment}, which allows us to circumvent the derivation of an explicit expression for the asymptotic variance of the {\tsqc} estimator. The applicability of the observed asymptotic variance is contingent on the sequences of semimartingales in question satisfying Condition~\ref{cond::MGs} in Section~\ref{sec::consistency}, or the weaker Condition~5 in \citet[p.~7]{mykland2017assessmentsupplement}. According to this latter condition the sequence of error martingales associated with the estimator whose uncertainty one wants to compute needs to be tight and P-UT. Consider 
\begin{equation}
(K_{n,1}\Delta_n)^{-1/2}(\tsqc_{B,K_2,K_1}(\widehat{\Theta}^n,\widehat{\Lambda}^n)_t - [\theta,\lambda]_t).
\label{eq::tsqc_sequence}
\end{equation}
This is the sequence for which we are going to use the observed asymptotic variance to compute its uncertainty. For $l = 1,\ldots,2K$ and $K \leq B/2$, define the interpolated processes
\begin{equation}
\begin{split}
\mathbb{Z}_K^{(l)}(f,\alpha,g,\beta)_t 
& = \sum_{i \equiv l[2K]} \int_{t_{i-K}}^{t_{i+K}} f_s^{(l,K)}\,\dd\alpha_s\int_{t_{i-K}}^{t_{i+K}} g_s^{(l,K)}\,\dd\beta_s \\
&\qquad \qquad \quad   + \int_{t_{*,l,K}}^{t} f_s^{(l,K)}\,\dd\alpha_s\int_{t_{*,l,K}}^{t} g_s^{(l,K)}\,\dd\beta_s
- \int_0^t f_s^{(l,K)}g_s^{(l,K)}\,\dd [\alpha,\beta]_s,
\end{split}
\notag
\end{equation}
for general semimartingales $\alpha$ and $\beta$, and families of functions $f = (f_{\cdot}^{(l,K)})_{1 \leq l \leq 2K}$ and $g = (g_{\cdot}^{(l,K)})_{1 \leq l \leq 2K}$ belonging to the classes $\FF$ and $\mathbb{G}$, respectively (see Definition~\ref{def:modulus}). Define also $\mathbb{Z}_{K}(f,\alpha,g,\beta) = (2K)^{-1}\sum_{l=1}^{2K}\mathbb{Z}_K^{(l)}(f,\alpha,g,\beta)_t$. Now, let the functions $f_s^{(l,K)}$ and $g_s^{(l,K)}$ be as defined in~\eqref{eq::f_func1} and \eqref{eq::g_func1}, respectively. Writing $(\mathbb{Z}_{K_2} - \mathbb{Z}_{K_1})(f,\alpha,g,\beta) = \mathbb{Z}_{K_2}(f,\alpha,g,\beta) - \mathbb{Z}_{K_1}(f,\alpha,g,\beta)$, and imposing the assumptions of Theorem~\ref{theorem::Th3.multivariate}, we can write 
\begin{equation} 
\begin{split}
\tsqc_{B,K_2,K_1}& (\widehat{\Theta}^n,\widehat{\Lambda}^n)_T 
 = \frac{(\gamma^2\mathbb{Z}_{K_2} - \mathbb{Z}_{K_1})(f,\theta,f,\lambda)_T}{\gamma^2 - 1} + [\theta,\lambda]_T\\
&  + \frac{n^{-\beta}( \gamma\mathbb{Z}_{K_2} - \mathbb{Z}_{K_1})(f,\theta,g,n^{\beta}M_n^{\lambda})_T}{(\gamma^2 -1 )K_{n,1}\Delta_n}\\
&  + \frac{n^{-\alpha}( \gamma\mathbb{Z}_{K_2} - \mathbb{Z}_{K_1})(g,n^{\alpha}M_n^{\theta},f,\lambda)_T}{(\gamma^2 -1 )K_{n,1}\Delta_n}\\  
& +\frac{n^{-(\alpha+\beta)}( \mathbb{Z}_{K_2} - \mathbb{Z}_{K_1})(g,n^{\alpha}M_n^{\theta},g,n^{\beta}M_n^{\lambda})_T}{(\gamma^2 - 1)K_{n,1}^2\Delta_n^2}  
+ o_p((K_{n,1}\Delta_n)^{1/2}).
\end{split}
\label{eq::tsqc_clt_decomp}
\end{equation}
From this expression we see that when $\alpha = \beta$ and $K_{n,1}\Delta_n$ is of order $n^{-\alpha}$, then all four terms in this sum will contribute the asymptotic variance of the sequence in~\eqref{eq::tsqc_sequence}.   

\begin{corollary}\label{corollary::tsqc_uncertainty} {\sc (Uncertainty of the {\tsqc})} Assume that $\theta^{(n)}$ and $\lambda^{(n)}$, as well as the error martingales $n^{\alpha}M_{n}^{\theta}$ and $n^{\beta}M_{n}^{\lambda}$ satisfy the conditions imposed on $\alpha^{(n)}$ and $\beta^{(n)}$ in Theorem~\ref{theorem::clt}. The conditions of Lemma~\ref{corr::TSQC_corollary} are also in force. Then the sequence in \eqref{eq::tsqc_sequence} is tight and P-UT. 
\end{corollary}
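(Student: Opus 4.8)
The plan is to read the decomposition \eqref{eq::tsqc_clt_decomp} as an identity of c\`adl\`ag processes on $[0,T]$ — every interpolated object $\mathbb{Z}_K^{(l)}(f,\alpha,g,\beta)_t$ is already defined for all $t\le T$, and the derivation leading to \eqref{eq::tsqc_clt_decomp} goes through with $T$ replaced by an arbitrary $t$, the remainder staying $o_p((K_{n,1}\Delta_n)^{1/2})$ uniformly in $t$ — and then to treat the four $\mathbb{Z}_K$-terms one at a time. Multiplying \eqref{eq::tsqc_clt_decomp} by $(K_{n,1}\Delta_n)^{-1/2}$ and subtracting $[\theta,\lambda]_t$, the process in \eqref{eq::tsqc_sequence} equals, up to a uniformly negligible term, a fixed linear combination of the four normalized processes $(K_{n,1}\Delta_n)^{-1/2}(\gamma^{j}\mathbb{Z}_{K_2}-\mathbb{Z}_{K_1})(f,\alpha^{(n)},g,\beta^{(n)})$, with $(f,g)$ the deterministic families from \eqref{eq::f_func1} and \eqref{eq::g_func1} and $(\alpha^{(n)},\beta^{(n)})$ ranging over $(\theta^{(n)},\lambda^{(n)})$, $(\theta^{(n)},n^{\beta}M_n^{\lambda})$, $(n^{\alpha}M_n^{\theta},\lambda^{(n)})$, $(n^{\alpha}M_n^{\theta},n^{\beta}M_n^{\lambda})$. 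The scalar coefficients are $(\gamma^2-1)^{-1}$, $n^{-\beta}(\gamma^2-1)^{-1}(K_{n,1}\Delta_n)^{-1}$, $n^{-\alpha}(\gamma^2-1)^{-1}(K_{n,1}\Delta_n)^{-1}$ and $n^{-(\alpha+\beta)}(\gamma^2-1)^{-1}(K_{n,1}^2\Delta_n^2)^{-1}$; because $\alpha=\beta$ and $K_{n,1}\Delta_n$ is of order $n^{-\alpha}$ (the conditions of Corollary~\ref{corr::TSQC_corollary}), each is $O(1)$. Since tightness (for c\`adl\`ag sequences converging in law to a continuous limit) and the P-UT property are preserved under multiplication by bounded deterministic scalars, under finite linear combinations, and under addition of a sequence that converges to $0$ uniformly on $[0,T]$ in probability, it suffices to show that each of the four normalized constituents is tight and P-UT.

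For a fixed such constituent I would apply Theorem~\ref{theorem::clt} with the corresponding pair $(\alpha^{(n)},\beta^{(n)})$ and with $(f,g)\in\mathbb{F}\times\mathbb{G}$; note that $\mathbb{Z}_K(f,\alpha,g,\beta)$ coincides with the interpolated martingale $Z_n$ of \eqref{eq::CLT_MG}. By hypothesis each of $\theta^{(n)},\lambda^{(n)},n^{\alpha}M_n^{\theta},n^{\beta}M_n^{\lambda}$ satisfies the conditions imposed on $\alpha^{(n)}$ and $\beta^{(n)}$ in Theorem~\ref{theorem::clt}, so each of the four pairs does too — the hypotheses on the cross-bracket $\langle\alpha^{(n)},\beta^{(n)}\rangle$ follow from the individual ones by polarization and the Kunita--Watanabe inequality, and for the tightness/P-UT conclusions sought here only tightness of the relevant predictable brackets and the Lindeberg conditions \eqref{eq::spot_lindeberg} are actually needed. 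Since $K_{n,2}=\gamma K_{n,1}$ with $\gamma\ge 2$ fixed, the two scales are of the same order and $(K_{n,1}\Delta_n)^{-1/2}\mathbb{Z}_{K_2}=\gamma^{1/2}(K_{n,2}\Delta_n)^{-1/2}\mathbb{Z}_{K_2}$; applying the general martingale central limit theorem (Theorem~\ref{theorem::th2.28_general}), exactly as in the proof of Theorem~\ref{theorem::clt}, to the bivariate normalized martingale built from the scales $K_1$ and $K_2$ yields joint stable convergence in law to a continuous $\Falg$-conditional Gaussian martingale, whence $(K_{n,1}\Delta_n)^{-1/2}(\gamma^{j}\mathbb{Z}_{K_2}-\mathbb{Z}_{K_1})(\cdot)$ converges stably in law to such a limit. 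In particular it converges in law, hence is tight, and, its limit being continuous, C-tight. Its P-UT property is not stated in Theorem~\ref{theorem::clt} but is produced inside its proof: that argument checks that the normalized martingales have tight predictable quadratic variations — indeed convergent ones, via $\int_0^t\kappa_s^{(n)}\,\dd s$ and its cross-scale analogues — and that their large jumps vanish asymptotically by \eqref{eq::spot_lindeberg}, which together characterize the P-UT property for a sequence of local martingales (cf.~\citet[Ch.~VI.6]{\JS}).

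Putting the pieces together, the process in \eqref{eq::tsqc_sequence} is a bounded-coefficient linear combination of four C-tight, P-UT sequences of continuous-time martingales plus a uniformly negligible remainder, hence is tight and P-UT. I expect the only real work to be two bookkeeping matters: confirming that \eqref{eq::tsqc_clt_decomp} genuinely holds at the process level on $[0,T]$ with a remainder that is both uniformly negligible and P-UT (so that adding it does no harm), and — since P-UT does not appear in the conclusion of Theorem~\ref{theorem::clt} — isolating from the martingale-CLT verification inside that proof the two ingredients (tightness of the predictable brackets, vanishing of the large jumps) that deliver it. No new stochastic estimate beyond those used for Theorems~\ref{theorem::Th3.multivariate} and~\ref{theorem::clt} should be needed.
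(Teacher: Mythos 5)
Your argument is essentially the paper's: both treat \eqref{eq::tsqc_clt_decomp} as the key decomposition, observe that the prefactors on the four error terms are bounded or vanish because $K_{n,1}\Delta_n$ is of order $n^{-\alpha}$ under the hypotheses of Corollary~\ref{corr::TSQC_corollary}, obtain $C$-tightness of each normalized $(K_j\Delta_n)^{-1/2}\mathbb{Z}_{K_j}$ from the bracket convergence and Lindeberg condition of Theorem~\ref{theorem::clt} together with the P-UT property extracted (via the Lenglart bound) from inside its proof, and conclude using that sums of $C$-tight sequences are $C$-tight and sums of P-UT sequences are P-UT. The only divergences are inessential: your detour through a joint bivariate CLT across the scales $K_1,K_2$ is not needed --- your own closure-under-linear-combinations step, which is exactly the paper's route, already suffices, and the joint version would require the cross-scale bracket convergence $\langle \mathbb{Z}_{K_2},\mathbb{Z}_{K_1}\rangle$ that the paper deliberately defers to the remark following the corollary --- and the joint conditions for the mixed pairs need not be manufactured by polarization (which would not in general give mean-square continuity of the cross-bracket density), since the paper simply reads them as part of the corollary's hypotheses, namely that the eight normalized brackets $(K_j\Delta_n)^{-1}[\mathbb{Z}_{K_j},\mathbb{Z}_{K_j}]_t$ have continuous limits or tend to zero.
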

\begin{proof} Since $K_{n,1}\Delta_n$ and $K_{n,2}\Delta_n$ are of the same order, it follows from Theorem~\ref{theorem::clt} that all the error martingales $(K_j\Delta_n)^{-1/2}\mathbb{Z}_{K_j}$ for $j=1,2$ in \eqref{eq::tsqc_clt_decomp} are tight and P-UT. Since $K_{n,1}\Delta_n$ is of the same order as $n^{-\alpha\wedge \beta}$, the factors outside the last three terms in \eqref{eq::tsqc_clt_decomp} are either $o(1)$ or will tend to one. By assumption, the quadratic variations of the eight $(K_j\Delta_n)^{-1}[\mathbb{Z}_{K_j},\mathbb{Z}_{K_j}]_t$ have continuous limits (or tend to zero). Combined with the Lindeberg-condition of Theorem~\ref{theorem::clt} this entails that the $(K_j\Delta_n)^{-1/2}\mathbb{Z}_{K_j}$ are $C$-tight (see the proof of Theorem~\ref{app::stable_clt_jumps}). Sums of $C$-tight sequences are $C$-tight \citep[Corollary~VI.3.33, p.~353]{\JS}, and sums of sequences that are P-UT are P-UT \citep[VI.6.4, p.~377]{\JS}.        
\end{proof}

\begin{remark} Inspection of the proof of Theorem~\ref{theorem::clt} reveals that it is fully possible to derive a central limit theorem for the {\tsqc}-estimator. The key to such a proof is to show that quadratic covariations of the form (assuming that we are dealing with martingales)
\begin{equation}
\langle\mathbb{Z}_{K_{n,2}},\mathbb{Z}_{K_{n,1}}\rangle_t,
\notag
\end{equation}
converge in probability to a continuous limit when $K_{n,1}\Delta_n \to 0$, and $K_{n,2} = \gamma K_{n,1}$. That such convergence in probability occurs under the assumptions of Theorem~\ref{theorem::clt} can be shown by the same techniques used to prove said theorem, albeit at the cost of a somewhat heavier notational burden. For the present purposes, all we want is to show that the observed asymptotic variance can be applied to the {\tsqc}-estimator, and for that tightness and P-UT is sufficient.   
\end{remark}



\section{Volatility and intensity}\label{sec::volatility_and_intensity}
In this section we turn to the application that motivated the current paper, namely the estimation of the quadratic covariation between the volatility process of a continuous time semimartingale, and the intensity process of the observation times. 
When estimating parameters associated with a continuous time process that is only observed at discrete times, simplifying assumptions are often imposed on the relation between the observation times and the underlying process. The observation times are typically either taken as fixed and equidistant, or they are governed by a stochastic process postulated to be independent of the observable process (see e.g.,~\citet[Ch.~9]{ait2014high} for a discussion). We refer to both cases as {`}exogenous times{'}. In many settings the assumption of exogenous times is violated, the case of high-frequency financial data being, at least in some cases, a pertinent example. Decisions to buy or sell a given security may, in part, be determined by features of that security, and since it is only at the times at which transactions are conducted that we get a glimpse of the continuous processes ticking in the background (modulo microstructure noise), one would expect that the observation times may be correlated with transaction-igniting features of the underlying process.

In recent years, much progress has been made when the assumption of exogenous times is relaxed. In \citet{li2013volatility,li2014realized} the realised volatility estimator is studied in the presence of endogenous observation times, and it is shown that a {`}bias{'} term appears in the limiting distribution of this estimator. This {`}bias{'} term is of the same order of magnitude as the process tending (stably) to a normal limit, and is thus not a bias term in the traditional sense. The reasons for caring about it have to do with efficiency considerations, and not with the estimation being off-the-target in an expected value sense. 
\citet{jacod2019estimating} construct an estimator of the integrated volatility in the presence of microstructure noise, jumps, and endogenous times. Other papers have dealt with consistency and central limit theorems under irregular and random times \citep{renault2011causality,hayashi2011irregular,fukasawa2012central,
potiron2017estimation}. 
Common for all the above papers is that the endogeneity of the observation times comes about because the times depend on the efficient price process itself, as opposed to latent spot parameter processes governing the evolution of the efficient price process. The tools developed in Section~\ref{eq::general_setup} allow us to statistically study situations where the observation times might depend on underlying non-observable features of the efficient price process, such as its spot-volatility process, the associated volatility-of-volatility, the leverage effect, and so on. To assess the direction and magnitude of such correlations, we can use the {\tsqc}-estimator of~\eqref{eq::ts_thetalambdahat1}, and also a correlation estimator based on the {\tsqc}. In this section we first present some theory specific to the volatility-intensity covariance estimation, then, in Section~\ref{sec::simulations} we perform a simulation study to assess the finite sample behaviour of our estimators, while Section~\ref{sec::empirical} contains an empirical study of the Apple stock over $21$ trading days in January $2018$.

\subsection{A model for volatility-intensity covariance estimation}
For a given frequency of observations, indexed by $n\geq 1$, the succesive observations occur at times $0 = T_{n,0} < T_{n,1} < \cdots$, where $(T_{n,i})_{n\geq 1}$ is a sequence of finite stopping times. Define the sequence of counting processes $N_{n,t} = \sum_{i\geq 1}I\{T_{n,i}\leq t\}$. We are going to assume (in Condition~\ref{cond::lambda.lim}) that, for observation frequency $n$, the inter-observational lags $T_{n,i} - T_{n,i-1}$ are of the same order of magnitude as $1/n$, and moreover, that $n^{-1}N_{n,t}$ has a possibly random probability limit when $n$ goes to infinity (see \citet{li2014realized} and \citet{jacod2017statistical,jacod2019estimating} for similar constructions). Based on the $N_{n,T}$ observations of $X_t$, we form an estimator $\widehat{\Theta}_{t}^n$ of $\Theta_{t} = \int_0^t \theta_s \,\dd s$, where the spot parameter process $\theta_s$ is itself assumed to be a semimartingale, and assume that $\widehat{\Theta}_{t}^n$ is consistent for $\Theta_t$. In the following we think of $\theta_t$ as the spot-volatility process $\sigma^2$, and $\Theta_t$ as the integrated volatility $\int_0^t \sigma_s^2\,\dd s$. The counting process $N_{n,t}$ can be decomposed as $N_{n,t} = M_{n,t} + \Lambda_{n,t}$, in terms of a martingale $M_{n,t}$ and an increasing and predictable process $\Lambda_{n,t}$. We assume that the latter process is absolutely continuous, so that $\Lambda_{n,t} = \int_0^t\lambda_{n,s}\,\dd s$, and that $\lambda_{n,t}$, called the intensity process, is itself a semimartingale. The process we seek to estimate is then $[\theta,\lambda]_t$ over one or consecutive observation windows.

Since $X$ is followed over the finite interval $[0,T]$, where $T$ is fixed, our arguments are based on asymptotics as the observation frequency gets higher, that is $\max_{i \geq 1}\abs{T_{n,i} - T_{n,i-1}} \to 0$, so-called infill asymptotics. To let the number of observations $N_{n,T}$ tend to infinity, and at the same time get a finite expression for the limiting intensity of the observation times, we impose the following condition.
\begin{condition}\label{cond::lambda.lim} There is a non-negative semimartingale $\lambda_t$ such that $n^{-1}\Lambda_{n,t} \overset{p}\to \Lambda_{t} \coloneqq \int_0^t \lambda_s\,\dd s$, for all $t\in[0,T]$.
\end{condition}  
One may think of $1/n$ as proportional to the expected distance between two observation times, or $n$ as being proportional to the expected number of observations per period. The point is that Condition~\ref{cond::lambda.lim} allows us to develop asymptotic theory in terms of $N_{n,T}$ for the estimators we construct. This construction is similar to that previously employed by \citet{li2013volatility}; and by \citet[Assumption~(O-$\rho$, $\rho^{\prime}$), p.~82]{jacod2019estimating}. %


Suppose that the estimator $\widehat{\Theta}_t^n$ satisfies the decomposition in~\eqref{eq::GH.decomp1}, and that its error process martingale $M_{n,t}^{\theta}$ obeys Condition~\ref{cond::MGs}. We return to the assumptions on the edge effects in due time. Define  $\widetilde{\Lambda}_t^n = n^{-1}N_{n,t}$. The counting process $N_{n,t}$ simply counts the transactions and is hence observable, whereas $n$ is a non-observable abstraction introduced so that the asymptotic theory developed in the two preceding sections generalises to volatility-intensity estimation. This means that $\widetilde{\Lambda}_t^n$ is 
a rescaling of an estimator. For the (finite sample) empirical applications of our estimator, the index $n$ will turn out to be immaterial.

\begin{remark}\label{rmk:n} We emphasize that $n$ does not need to be observed for the developments in this section to be valid.
We need $n$ to exist in the sense of Condition \ref{cond::lambda.lim}, but otherwise $n$ is a notational convenience that permits us to state results more simply, and $n$ is in this sense always only a scaling. For example, $\widetilde{\Lambda}_t^n \overset{p}\to \Lambda_t$ can be restated as $\widehat{\Lambda}_{n,t} = \Lambda_{n,t}(1 + o_p(1))$, where
$\widehat{\Lambda}_{n,t} = N_{n,t}$. 
\end{remark}

Notice that there are no edge effects associated with $\widetilde{\Lambda}_t^n$, so \eqref{eq::GH.decomp1} becomes $\widetilde{\Lambda}_t^n = n^{-1}\Lambda_{t,n} + M_{n,t}^{\lambda}$, where $M_{n,t}^{\lambda} = n^{-1}(N_{n,t} - \Lambda_{t,n})$ is a martingale sequence. Moreover, as $n\to \infty$,  
\begin{equation}
n[M_{n}^{\lambda},M_{n}^{\lambda}]_t = n^{-1}N_{n,t} \overset{p}\to \Lambda_t,
\label{eq::qvMM_conv} 
\end{equation}
by Condition~\ref{cond::lambda.lim}. The convergence in~\eqref{eq::qvMM_conv} combined with the fact that $\Lambda_t$ is increasing and continuous, yield 
\begin{equation}
n^{1/2}M_{n,t}^{\lambda} \Rightarrow \int_0^t \lambda_s^{1/2}\,\dd W_s^{\prime}\quad \text{stably},
\notag
\end{equation} 
where $W_s^{\prime}$ is a Wiener process defined on an extension of the original probability space (see Theorem~\ref{theorem::th2.28_general} in Appendix~\ref{app::stable_clt_jumps}). Set $L_t^{\lambda} = \int_0^t \lambda_s^{1/2}\,\dd W_s^{\prime}$, and we have the first part of Condition~\ref{cond::MGs}. For Theorem~\ref{theorem::Th3.multivariate} to be applicable, the sequence of martingales $n^{1/2}M_{n,t}^{\lambda}$ must also be P-UT.
\begin{lemma}\label{lemma::M_is_PUT} Assume Condition~\ref{cond::lambda.lim}. Then $n^{1/2}M_{n,t}^{\lambda}$ is P-UT.
\end{lemma}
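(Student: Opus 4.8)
The plan is to observe that, after scaling, $n^{1/2}M_{n}^{\lambda}$ is nothing but the compensated counting martingale $n^{-1/2}(N_{n,\cdot}-\Lambda_{n,\cdot})$, whose predictable quadratic variation is precisely the quantity that Condition~\ref{cond::lambda.lim} controls. Since $\Lambda_{n,\cdot}$ is continuous and of finite variation, $N_{n,\cdot}$ has, for each fixed $n$, finitely many jumps on $[0,T]$ (its compensator $\Lambda_{n,T}$ being a.s.\ finite), each of size one; hence $N_{n,\cdot}$ is locally bounded and $n^{1/2}M_{n}^{\lambda}$ is a locally square-integrable martingale with
\begin{equation}
\langle n^{1/2}M_{n}^{\lambda},n^{1/2}M_{n}^{\lambda}\rangle_{t} = n^{-1}\Lambda_{n,t}, \qquad [n^{1/2}M_{n}^{\lambda},n^{1/2}M_{n}^{\lambda}]_{t} = n^{-1}\sum_{s\le t}(\Delta N_{n,s})^{2} = n^{-1}N_{n,t} = \widetilde{\Lambda}_{t}^{n}.
\notag
\end{equation}
By Condition~\ref{cond::lambda.lim} the first of these converges in probability to $\Lambda_{t}$, and by~\eqref{eq::qvMM_conv} so does the second; since $\Lambda_{T}$ is a.s.\ finite, both sequences $\{\langle n^{1/2}M_{n}^{\lambda},n^{1/2}M_{n}^{\lambda}\rangle_{t}\}_{n}$ and $\{[n^{1/2}M_{n}^{\lambda},n^{1/2}M_{n}^{\lambda}]_{t}\}_{n}$ are tight for each $t\in[0,T]$, and all jumps of $n^{1/2}M_{n}^{\lambda}$ are bounded by $n^{-1/2}\to 0$.

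From here I would deduce the P-UT property by Lenglart's domination inequality. Let $H^{n}$ be an $\FF^{n}$-predictable process with $\lvert H^{n}\rvert\le 1$. Then $H^{n}\cdot(n^{1/2}M_{n}^{\lambda})$ is a locally square-integrable martingale with $\langle H^{n}\cdot(n^{1/2}M_{n}^{\lambda}),H^{n}\cdot(n^{1/2}M_{n}^{\lambda})\rangle = \int_{0}^{\cdot}(H^{n}_{s})^{2}\,\dd\langle n^{1/2}M_{n}^{\lambda},n^{1/2}M_{n}^{\lambda}\rangle_{s}\le n^{-1}\Lambda_{n,\cdot}$, so that $(H^{n}\cdot(n^{1/2}M_{n}^{\lambda}))^{2}$ is dominated, in the sense of Lenglart, by the predictable increasing process $n^{-1}\Lambda_{n,\cdot}$. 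Lenglart's inequality then gives, for all $\eps,\eta>0$ and uniformly over such $H^{n}$,
\begin{equation}
\pr\Bigl( \sup_{s\le t}\bigl\lvert (H^{n}\cdot(n^{1/2}M_{n}^{\lambda}))_{s}\bigr\rvert > \eta \Bigr) \le \frac{\eps}{\eta^{2}} + \pr\bigl( n^{-1}\Lambda_{n,t} > \eps \bigr).
\notag
\end{equation}
Given $\delta>0$, the tightness of $n^{-1}\Lambda_{n,t}$ lets me fix $\eps$ with $\sup_{n}\pr(n^{-1}\Lambda_{n,t}>\eps)<\delta/2$, and then $\eta$ with $\eps/\eta^{2}<\delta/2$; since $\lvert(H^{n}\cdot(n^{1/2}M_{n}^{\lambda}))_{t}\rvert\le\sup_{s\le t}\lvert(H^{n}\cdot(n^{1/2}M_{n}^{\lambda}))_{s}\rvert$, this is exactly the stochastic boundedness of $\{(H^{n}\cdot(n^{1/2}M_{n}^{\lambda}))_{t}\}$ over all predictable $\lvert H^{n}\rvert\le1$, i.e.\ the P-UT property.

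An equivalent route, perhaps cleaner to cite, is through the characteristics of $n^{1/2}M_{n}^{\lambda}$: its drift and continuous martingale part vanish, while its jump measure has predictable compensator $\nu^{n}$ carried by $\{n^{-1/2}\}$ with total mass $\Lambda_{n,t}$, so that $\int(\lvert x\rvert^{2}\wedge 1)\,\nu^{n}([0,t]\times\dd x)=n^{-1}\Lambda_{n,t}$ is tight by Condition~\ref{cond::lambda.lim}, and for any fixed $\eps$ there are no jumps exceeding $\eps$ once $n>\eps^{-2}$; the standard characteristics criterion for P-UT (see~\citet[Ch.~VI.6]{\JS}) then applies directly. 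I do not anticipate any real obstacle: the substantive point is merely that the predictable quadratic variation of the scaled error martingale reduces to $n^{-1}\Lambda_{n,\cdot}$, which is exactly what Condition~\ref{cond::lambda.lim} is designed to handle. The only places that call for a little care are keeping track of the possibly $n$-dependent filtrations $\FF^{n}$ in the definition of P-UT, and ensuring the Lenglart bound is applied uniformly over all predictable integrands $H^{n}$.
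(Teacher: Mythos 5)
Your proof is correct and rests on exactly the facts the paper uses: the scaled error martingale has predictable quadratic variation $n\langle M_{n}^{\lambda},M_{n}^{\lambda}\rangle_{t}=n^{-1}\Lambda_{n,t}$, which is tight by Condition~\ref{cond::lambda.lim}, and its jumps are bounded (by $n^{-1/2}\le 1$). The only difference is that where the paper simply cites Proposition VI.6.13 of Jacod and Shiryaev to pass from tight brackets plus bounded jumps to P-UT, you rederive that implication by hand via Lenglart's inequality uniformly over predictable integrands $\lvert H^{n}\rvert\le 1$ — the same device the paper itself employs in the proof of Theorem~\ref{theorem::th2.28_general} — so the argument is essentially the paper's, made self-contained.
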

\begin{proof} That $n\langle M_{n}^{\lambda},M_{n}^{\lambda}\rangle_t = n^{-1}\int_0^t \lambda_{n,s}\,\dd s$ ensures that $n\langle M_{n}^{\lambda},M_{n}^{\lambda}\rangle_t$ is tight \citep[Proposition VI.3.26, p.~351]{\JS}. Being a counting process martingale, the jumps $n^{1/2}\abs{\Delta M_{n,t}^{\lambda}} \leq 1$, and \citet[Proposition VI.6.13, p.~379]{\JS} gives the result.   
\end{proof}
In the absence of edge effects on the part of $\widetilde{\Lambda}^{n}_t$, ${\rm QV}(\widehat{\Theta}^n,\widetilde{\Lambda}^{n})$ can be decomposed as (cf.~the decomposition in \eqref{eq::qv_before_rates} of Appendix~\ref{app::QCV.proof1}),
\begin{equation}
\qv(\widehat{\Theta}^n,\widetilde{\Lambda}^{n})
= \overline{\rm QV}(\widehat{\Theta}^n,\widetilde{\Lambda}^{n}) + O_p\big( {\rm QV}(\widetilde{\Lambda}^{n})^{1/2}R_{n,k}(\Theta)^{1/2} \big),
\label{eq::qv_Theta_Lambda}
\end{equation}
by the Cauchy--Schwarz inequality, where 
\begin{equation}
\overline{\qv}(\widehat{\Theta}^n,\widetilde{\Lambda}^{n}) 
= \qv(\Theta,\Lambda_n/n) + {\rm QV}(M^{\theta},\Lambda_n/n) + {\rm QV}(\Theta,M_n^{\lambda}) + \qv(M^{\theta},M_n^{\lambda}),
\notag
\end{equation}
and $R_{n,K}(\Theta) = K^{-1}\sum_{i=K}^{B-K} 
(e^{\theta}_{t_{i+K}} - e^{\theta}_{t_{i}} - (e^{\theta}_{t_{i}}- e^{\theta}_{t_{i-K}}))^2$. 

\begin{corollary}\label{cor::volatility_intensity} Suppose that $\widehat{\Theta}_{t}^n$ satisfies Condition~\ref{cond::MGs} in Section \ref{sec::consistency}, that $(\lambda_{n,t}/n)_{n\geq 1}$ is tight and P-UT, and that $e_{t}^{\theta}$ are $o_p((K_n\Delta_n)^{1/2}n^{-\alpha})$. Then, as $K_n\Delta_n \to 0$
\begin{equation}
\overline{\qv}(\widehat{\Theta}^n,\widetilde{\Lambda}^{n})_T 
 = 2[M_n^{\theta},M_n^{\lambda}]_{T-} + \frac{2}{3}(K_n\Delta_n)^2[\theta,\lambda_n/n]_{T-}
+ o_p\big( (K_n\Delta_n)^{2} \big) 
 + o_p\big(n^{-\alpha}n^{-1/2}\big).
\notag
\end{equation}
\end{corollary}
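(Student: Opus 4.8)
The plan is to derive this corollary as a special case of Theorem~\ref{theorem::Th3.multivariate} applied to the pair $(\widehat{\Theta}^n,\widetilde{\Lambda}^n)$, together with the decomposition~\eqref{eq::qv_Theta_Lambda} that isolates the edge-effect remainder. First I would record the structural facts already assembled in the text: $\widetilde{\Lambda}^n_t$ satisfies the decomposition~\eqref{eq::GH.decomp1} with \emph{no} edge effects, i.e.~$e^{\lambda}_{n,t}\equiv 0$, and with error martingale $M^{\lambda}_{n,t}=n^{-1}(N_{n,t}-\Lambda_{n,t})$; by the computation~\eqref{eq::qvMM_conv} and the stable CLT the rescaled martingale $n^{1/2}M^{\lambda}_n$ converges stably to $L^{\lambda}=\int_0^{\cdot}\lambda_s^{1/2}\,\dd W'_s$, and by Lemma~\ref{lemma::M_is_PUT} it is P-UT; finally $[L^{\lambda},L^{\lambda}]_T=\Lambda_T$ is $\Falg$-measurable (indeed $\Galg$-measurable under Condition~\ref{cond::lambda.lim}). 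Thus Condition~\ref{cond::MGs} holds for the pair $(M^{\theta}_n,M^{\lambda}_n)$ with exponents $\alpha$ (the given one for $\widehat{\Theta}^n$) and $\beta=1/2$.

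Next I would check the edge-effect hypothesis of Theorem~\ref{theorem::Th3.multivariate}: it requires $e^{\theta}_t=o_p((K_n\Delta_n)^{1/2}n^{-\alpha})$ and $e^{\lambda}_t=o_p((K_n\Delta_n)^{1/2}n^{-\beta})$. The first is exactly the assumption of the corollary; the second is automatic since $e^{\lambda}_{n,t}\equiv 0$. The tightness/P-UT hypothesis on $(\lambda_{n,t}/n)_{n}$ is what lets us conclude that the spot process $\lambda^{(n)}=\lambda_n/n$ entering the $[\theta,\lambda_n/n]_T$ term is well-behaved in the sense needed by Theorem~\ref{theorem::Th3.multivariate}'s proof (it plays the role of the semimartingale spot process whose continuous-time quadratic covariation with $\theta$ appears in the Integral-to-Spot device). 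With these verifications in hand, Theorem~\ref{theorem::Th3.multivariate} applied verbatim to $(\widehat{\Theta}^n,\widetilde{\Lambda}^n)$ gives
\begin{equation}
\qv_{B,K}(\widehat{\Theta}^n,\widetilde{\Lambda}^n)_T = 2[M^{\theta}_n,M^{\lambda}_n]_{T} + \tfrac{2}{3}(K_n\Delta_n)^2[\theta,\lambda_n/n]_{T} + o_p((K_n\Delta_n)^2) + o_p(n^{-(\alpha+\beta)}),
\notag
\end{equation}
and substituting $\beta=1/2$ turns the last term into $o_p(n^{-\alpha}n^{-1/2})$, which is precisely the stated conclusion once one observes that the left side equals $\overline{\qv}(\widehat{\Theta}^n,\widetilde{\Lambda}^n)_T$ up to the Cauchy--Schwarz remainder in~\eqref{eq::qv_Theta_Lambda}. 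That remainder is $O_p(\qv(\widetilde{\Lambda}^n)^{1/2}R_{n,K}(\Theta)^{1/2})$; since $\qv(\widetilde{\Lambda}^n)=O_p(K_n\Delta_n\cdot n^{-1})$ (from $n[M^{\lambda}_n,M^{\lambda}_n]_T\overset{p}\to\Lambda_T$ plus the $(K_n\Delta_n)^2[\lambda_n/n]$ term) and $R_{n,K}(\Theta)=o_p(K_n\Delta_n\,n^{-2\alpha})$ by the edge-effect assumption, this remainder is $o_p(K_n\Delta_n\, n^{-\alpha}n^{-1/2})$, hence dominated by the error terms already present. Assembling the pieces gives the corollary.

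I do not expect a serious obstacle here, since this is essentially a bookkeeping exercise: the real content sits in Theorem~\ref{theorem::Th3.multivariate} and in the martingale facts~\eqref{eq::qvMM_conv}--Lemma~\ref{lemma::M_is_PUT}, all of which are available. The one point deserving a little care is making sure the $o_p(n^{-\alpha}n^{-1/2})$ bound on the edge-effect cross terms is clean: the edge effect $e^{\theta}$ enters $\overline{\qv}$ only through the Cauchy--Schwarz remainder (not through $\overline{\qv}$ itself, which is built from $\Theta$, $M^{\theta}_n$, $\Lambda_n/n$, $M^{\lambda}_n$ only), so one must confirm that combining $e^{\theta}_t=o_p((K_n\Delta_n)^{1/2}n^{-\alpha})$ with the $O_p(n^{-1/2})$ size of the $\widetilde{\Lambda}^n$-increments over a window of $2K_n$ blocks yields a term of the claimed order after dividing by $K_n$ in the definition of $\qv$; this is the same estimate used in Appendix~\ref{app::QCV.proof1} for Theorem~\ref{theorem::Th3.multivariate}, now with the $\lambda$-side rate $\beta=1/2$ inserted, so it transfers with only trivial adjustments.
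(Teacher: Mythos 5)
Your verification of the hypotheses is exactly the paper's argument: by \eqref{eq::qvMM_conv} and Lemma~\ref{lemma::M_is_PUT}, $\widetilde{\Lambda}^n_t=n^{-1}\Lambda_{n,t}+M^{\lambda}_{n,t}$ satisfies Condition~\ref{cond::MGs} with $\beta=1/2$ and no edge effects, the tightness/P-UT assumption on $\lambda_{n}/n$ makes Theorem~7 of the supplement applicable, and Theorem~\ref{theorem::Th3.multivariate} with $\beta=1/2$ then produces the displayed expansion. Where you depart from the paper is the last step, and that departure is both unnecessary and quantitatively shaky. The corollary is a statement about $\overline{\qv}(\widehat{\Theta}^n,\widetilde{\Lambda}^{n})_T$, which by construction contains no edge-effect terms; the paper therefore invokes the $\overline{\qv}$ part of Theorem~\ref{theorem::Th3.multivariate} (the decomposition in Appendix~\ref{app::QCV.proof1}) directly and never has to touch the Cauchy--Schwarz remainder in \eqref{eq::qv_Theta_Lambda}. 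You instead prove the statement for the full $\qv$ and then subtract the remainder, and there your orders are off: $\qv(\widetilde{\Lambda}^{n})$ is of order $O_p((K_n\Delta_n)^2+n^{-1})$ (the paper quotes the looser $O_p(K_n\Delta_n+n^{-1/2})$ immediately after the corollary), not $O_p(K_n\Delta_n\, n^{-1})$ -- the martingale contribution $2[M_n^{\lambda},M_n^{\lambda}]_T\approx n^{-1}\Lambda_T$ carries no factor $K_n\Delta_n$, and the $\Lambda_n/n$ contribution is $(K_n\Delta_n)^2$, not smaller. With the corrected order, the remainder $O_p(\qv(\widetilde{\Lambda}^{n})^{1/2}R_{n,K}(\Theta)^{1/2})$ contains a piece of order $(K_n\Delta_n)^{3/2}n^{-\alpha}$, which is not dominated by $o_p((K_n\Delta_n)^{2})+o_p(n^{-\alpha-1/2})$ under the corollary's only rate assumption $K_n\Delta_n\to 0$; one would need a link such as $K_n\Delta_n\asymp n^{-\alpha}$ (and, if one is strict, the bound $R_{n,K}(\Theta)=o_p(K_n\Delta_n n^{-2\alpha})$ itself requires treating the averaged sum of $\sim B_n/K_n$ squared second differences as a single term). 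None of this endangers the corollary, because the step can simply be deleted: since the target is $\overline{\qv}$, apply the edge-effect-free decomposition with $\beta=1/2$ as the paper does, and you are finished.
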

\begin{proof} By Lemma~\ref{lemma::M_is_PUT}, the sequence $\widetilde{\Lambda}_{t}^n = n^{-1}\Lambda_{n,t} + M_{n,t}^{\lambda}$ satisfies Condition~\ref{cond::MGs}, and the conditions on $(\lambda_{n,t}/n)_{n\geq 1}$ ensure that Theorem~7 in \citet{mykland2017assessmentsupplement} is applicable. The second part of Theorem~\ref{theorem::Th3.multivariate} then gives the result. 
\end{proof}
We have that $\qv(\widetilde{\Lambda}^{n}) = O_p(K_n\Delta_n + n^{-1/2})$, which via~\eqref{eq::qv_Theta_Lambda} shows how differing restrictions on the edge effects associated with the integrated volatility estimator give differing conclusions about ${\rm QV}(\widehat{\Theta}^n,\widetilde{\Lambda}^{n})$ (see the discussion in Appendix~\ref{app::QCV.proof1}). If we assume that the edge effects associated with $\widehat{\Theta}_t^n$ are $o_p((K_n\Delta_n)^{3/4}n^{-\alpha})$, which is not unrealistic when working with two-scales estimators and pre-averaged observations (see~\citet{zhang2005tale} and \citet{mykland2019algebra}), then the conclusion of Corollary~\ref{cor::volatility_intensity} is 
\begin{equation}
\begin{split}
\qv(\widehat{\Theta}^n,\widetilde{\Lambda}^{n})_T &
 = 2[M_n^{\theta},M_n^{\lambda}]_{T-} + \frac{2}{3}(K_n\Delta_n)^2[\theta,\lambda_n/n]_{T-}\\
& \qquad \qquad + O_p\big( (K_n\Delta_n)^{5/2} \big) 
 + O_p\big((K_n\Delta_n)^{1/2}n^{-\alpha}n^{-1/2}\big).
 \end{split}
\notag
\end{equation}   
Since $[\theta,\lambda_n/n]_{T-} \to_p [\theta,\lambda]_{T-}$, Corollary~\ref{corr::TSQC_corollary} entails that ${\rm TSQC}_{B,K_1,K_2}(\widehat{\Theta}^n,\widetilde{\Lambda}^n)$ is consistent. With the definitions in Remark~\ref{rmk:n}, ${\rm TSQC}_{B,K_1,K_2}(\widehat{\Theta}^n,\widehat{\Lambda}^n)$ is also consistent. 
Also, consider the process $\rho_t(\cdot,\cdot)$, given by 
\begin{equation}
\rho(\theta,\lambda)_t = \frac{[\theta,\lambda]_t}{([\theta,\theta]_t[\lambda,\lambda]_t)^{1/2}}.
\notag
\end{equation}   
Notice that $0 \leq \rho(\theta,\lambda)_t \leq 1$ for all $t$ due to the Kunita--Watanabe inequality~\citep[Theorem~II.25, p.~69]{protter2004}. For each $t$ we see that $\rho(\theta,\lambda_n)_t = \rho(\theta,\lambda_n/n)_t \overset{p}\to \rho(\theta,\lambda)_t$ by the continuous mapping theorem, which means that the coefficient $\rho(\theta,\lambda)_t$ can be consistently estimated using the estimators $\widehat{\Theta}_t^n$ and $\widehat{\Lambda}_t^n$, the latter simply defined as $\widehat{\Lambda}_t^n = N_{n,t}$. In particular, define 
\begin{equation}
\rho_{\tsqc}(\widehat{\Theta}^n,\widehat{\Lambda}^n)_{T} = \frac{{\rm TSQC}(\widehat{\Theta}^n,\widehat{\Lambda}^n)_{T}}{({\rm TSQC}(\widehat{\Theta}^n)_{T}{\rm TSQC}(\widehat{\Lambda}^n)_{T})^{1/2}} ,
\notag
\end{equation}
and note that $\rho_{\tsqc}(\widehat{\Theta}^n,\widehat{\Lambda}^n)_{T} = \rho_{\tsqc}(\widehat{\Theta}^n,\widetilde{\Lambda}^n)_{T}$, from which consistency of this estimator follows. When $\widehat{\Theta}^n$ and $\widetilde{\Lambda}^n$ have different convergence rates, as in Lemma~\ref{lemma::consistency1}, another consistent estimator for $\rho(\theta,\lambda)_t$ is $\qv_{B,K_2}(\widehat{\Theta}^n,\widetilde{\Lambda}^n)/({\rm TSQC}(\widehat{\Theta}^n)_{T}{\rm TSQC}(\widehat{\Lambda}^n)_{T})^{1/2}$. Since the speed at which $\qv_{B,K}$ converges is governed by the inferior convergence rate, there is, however, not that much to be gained in using this latter estimator, potentially apart from some less fine tuning of the $K_1$ and $K_2$ parameters. 
These two estimators of $\rho(\theta,\lambda)_t$ have a similar flavour to them, but are different from, the first-order correlation estimator introduced in \citet[Sections 3.1-3.2, pp.~899--903]{barndorff2004econometric}.

In Section~\ref{sec::simulations} we study the performance of $\rho_{\tsqc}(\widehat{\Theta}^n,\widehat{\Lambda}^n)_{t}$ on simulated data, and investigate its sensitivity to the choice of tuning parameters $K_1$ and $K_2$. Before proceeding to the simulations and the empirical application, we provide an example of a simple model satisfying the above assumptions. 

\begin{example}\label{example.model} {\sc (A volatility-intensity model)}. Suppose that we observe samples from the process $X_t = X_0 + \int_0^t\sigma_s\,\dd W_s$, where the spot volatility and the intensity follow CIR-processes \citep{cox2005theory} given by, 
\begin{equation}
\begin{split}
&\dd \sigma_t^2  = \kappa(\alpha - \sigma_t^2)\,\dd t + \gamma \sigma_t\,\dd Z_{t},\quad \sigma_{0}^2 = \alpha,\\
&\dd \lambda_{n,t}  = \beta_n(\xi_n - \lambda_{n,t})\,\dd t + \nu_n \lambda_{n,t}^{1/2}\,\dd B_{t}, \quad \lambda_{n,0} = \xi^n,
\label{eq::example.model}
\end{split}
\end{equation}
where $Z_t$ and $B_t$ are Wiener processes such that ${\rm corr}(Z_t,B_t)= \rho$, and $W_t$ is a Wiener process that may or may not be correlated with $Z_t$, thus allowing for a leverage effect, or $B_t$. 
The parameters $\kappa,\alpha$ and $\gamma$ as well as $\beta_n,\xi_n$ and $\nu_n$ are positive and we assume that the Feller condition \citep{feller1951two} holds for both the volatility and the intensity, that is $2\kappa\alpha \geq \gamma^2$, and $2\beta_n \xi_n \geq \nu_n^2$ for all $n \geq 1$. In this model, the dependency between $\sigma_t^2$ and $\lambda_{n,t}$ is introduced by the correlation between $Z_t$ and $B_t$. Suppose that $\xi^n = n\xi$, $\nu_n = \sqrt{n}\nu$ and that $0 < \beta \leq \beta_n \to \infty$ as $n\to \infty$. Then, for each $t\in [0,T]$, we have that $n^{-1}\Lambda_{n,t} \overset{p}\to \xi t$, and that 
\begin{equation}
n^{-1}[\sigma^2,\lambda_n]_t \overset{p}\to  [\sigma^2,\lambda]_t =  
\rho\gamma\nu \xi^{1/2} \int_0^t \sigma_s\,\dd s,
\label{eq::qcv.convergence}
\end{equation}
as $n \to \infty$. See Appendix~\ref{app::example.model} for details. 
\end{example}
In the next section the model of Example~\ref{example.model} is used as the basis for a simulation study.

\subsection{Simulations}\label{sec::simulations}
The data were simulated from the model presented in Example~\ref{example.model}. The initial  observations for the volatility and intensity processes were sampled from a Gamma distribution with parameters $(2\kappa\alpha/\gamma^2,2\kappa/\gamma^2)$ and a Gamma distribution with parameters $(2\beta_n\xi_n/\nu_n^2,2\beta_n/\nu_n^2)$ distribution, respectively. The parameter values were $\alpha=2.172,\kappa=2.345,\gamma = 1.000$ (volatility model), $\xi_n = n8.912,\beta_n = n^{1/4}0.169,\nu = \sqrt{n}1.000$, with $n = 40~000$. The microstructure noise was taken as additive on the efficient price and independent of the three underlying Brownian motions, that is, we observe 
\begin{equation}
Y_{t_i} = X_{t_i} + \eps_{t_i}, 
\notag
\end{equation}
where the $\eps_{t_i}$ were independent mean zero normals with standard deviation $0.0005$, independent of $W,Z$ and $B$. These three process were all Brownian motions, $W$ was independent of $Z$ and $B$, while $Z$ and $B$ were jointly Brownian with correlation $\rho = 0.912$. The data were simulated to mimic features of the actual Apple stock data that we analyse in Section~\ref{sec::empirical}. With $[0,T]$ one trading day ($6.5$ hours) the intensity function $\lambda_{n,t}$ is such that we have about $275~000$ observations of $Y_t$ per day. This is a common number of daily trades of a liquid stock such as that of Apple. As our estimator of the integrated volatility we used the Two-Scales Realised Volatility (TSRV) of \citet{zhang2005tale}, while $\widetilde{\Lambda}^n$ was used to estimate the cumulative intensity of the observation times. The TSRV we used is given by 
\begin{equation}
\widehat{\Theta} 
= \big\{\big(1 - \frac{K - J + 1/3}{N}\big)(K-J)\big\}^{-1}\big\{K[\bar{Y},\bar{Y}]^{(K)} - J[\bar{Y},\bar{Y}]^{(J)}\big\},
\label{eq::tsrv}
\end{equation}
where $\bar{Y}$ are pre-averaged observations, and $[\bar{Y},\bar{Y}]^{(K)} = K^{-1}\sum_{i=1}^{N-K}(\bar{Y}_{i+K} - \bar{Y}_{i})^2$, where $N$ are the number of {`}observations{'} of $\bar{Y}$, and $K$ is a tuning parameter chosen by the user (\citet[Eq.~(17),	p.~106]{mykland2019algebra} for this construction). Recall that the rescaling by $n$ 
is an abstraction that does not affect consistency, cf. Remark \ref{rmk:n}.
For each simulation we estimated the quadratic covariation $[\sigma^2,\lambda]_{T-}$, the coefficient $\rho(\sigma^2,\lambda)_{T-}$ and $\beta_{T-}$, the latter defined as $\beta_t = [\sigma^2,\lambda]_t/[\lambda,\lambda]_t$. All the quadratic (co-)variations were estimated using the {\tsqc}-estimator. Note, however, that the quadratic covariation $[\sigma^2,\lambda]$ could have been estimated directly using $\qv_{B,K}(\widehat{\Theta}^n,\widetilde{\Lambda}^n)$, this is because the TSRV of~\eqref{eq::tsrv} has convergence rate $n^{1/6}$ {to $n^{1/4}$ (depending on the degree of preaveraging)}, while $\widetilde{\Lambda}^n$ converges at the $n^{1/2}$ rate (see \citet[Theorem~4, p.~1402]{zhang2005tale} and Lemma~\ref{lemma::consistency1}). In Figure~\ref{fig::deviance_1} we have plotted the deviance of the estimates from the (random) estimands for various values of $K_1$, with $K_2 = 2K_1$ throughout.  

 \begin{figure}
\centering 
\includegraphics[scale=0.50,angle=270]{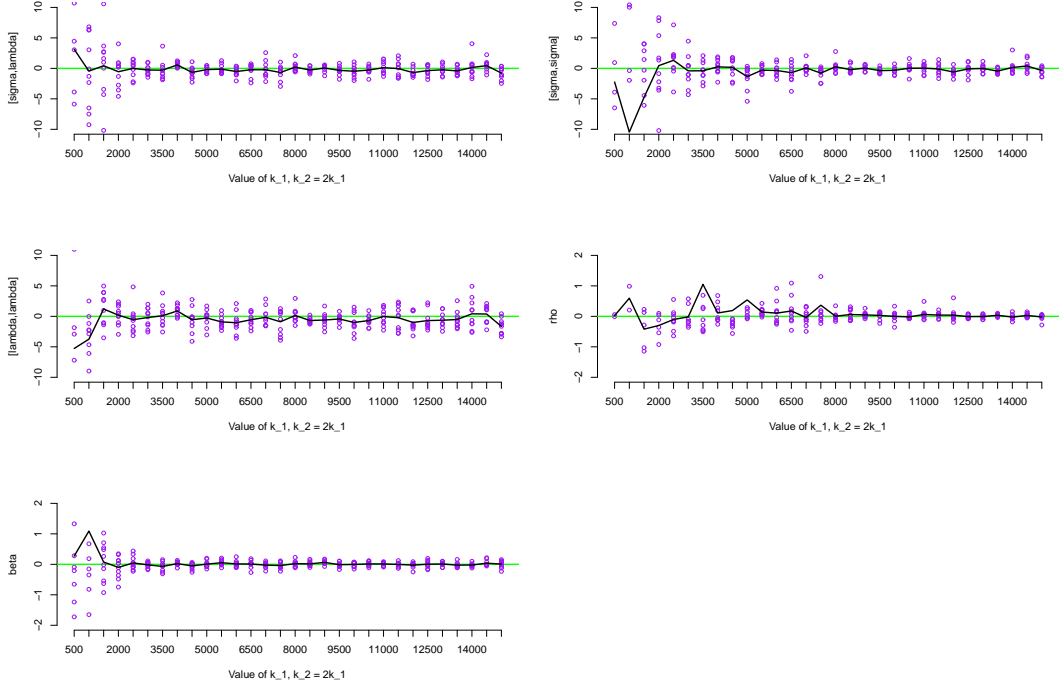} %
\caption{Values of $K_1$ on the $x$-axis ($K_2 = 2K_1$). Deviance of the estimate from the random truth, i.e.,~$\hat{\theta} - \theta$, on the $y$-axis. The wiggly lines are the means of the $10$ simulations performed for each value of $K_2$; the dots are the actual deviances; the straight lines indicate zero deviance. The TSRV-estimator (with $K=2$ and $J=1$, see~\citet[Eq.~(17),	p.~106]{mykland2019algebra}) was used to estimate the integrated volatility.}
\label{fig::deviance_1} 
\end{figure}

\subsection{An empirical application}\label{sec::empirical}
In the empirical study we analyse features of the Apple stock as traded	over a period of $21$ trading days in January 2018. All transactions registered in the U.S. National Market System conducted between 9:45 am - 3:45 pm Eastern Standard Time are included. The reason for choosing this window is to avoid abnormal trading activity during the opening and closing of the New York Stock Exchange, and to avoid those pre- and post-market hours during which the trading frequency is low \citep[p.~205]{wang2014estimation}. The Apple stock data is recorded down to the nanosecond ($10^{-9}$ seconds), and for the period under study the mean number of transactions over a trading day during the time window we use was $203\,924$, which is about nine transactions per second. After some data cleaning, the data was pre-averaged and the TSRV estimator of \citet{zhang2005tale} was used to estimate the integrated volatility. The cumulative intensity of the observation times was estimated by $10^{-6}N_{t}$, where $N_t$ counts the number of transactions conducted from 9:45 am to 9:45 am plus $t$. Besides making the plots more aesthetically pleasing, the number $10^{-6}$ plays no role.   

We used the {\tsqc}-estimator for daily estimation of the volatility-intensity covariance matrix and the two transformations thereof, $\rho(\sigma^2,\lambda)_t$ and $\beta_t$. The estimates of $\rho(\sigma^2,\lambda)_t$ are time-varying and lie between $0.5$ and $0.8$ for most of the days under study, indicating that the two processes are indeed correlated. To estimate the (pointwise) confidence bands of our TSQC-estimators we employed the Observed asymptotic variance of \citet{mykland2017assessment}. This estimator of the asymptotic variance is akin to the observed information in likelihood theory, and by using it, we avoid the difficulty of finding an explicit expression for the asymptotic variance. The applicability of the observed asymptotic variance is ensured by Corollary~\ref{corollary::tsqc_uncertainty}.       
     
\begin{figure}
\centering 
\includegraphics[scale=0.50,angle=270]{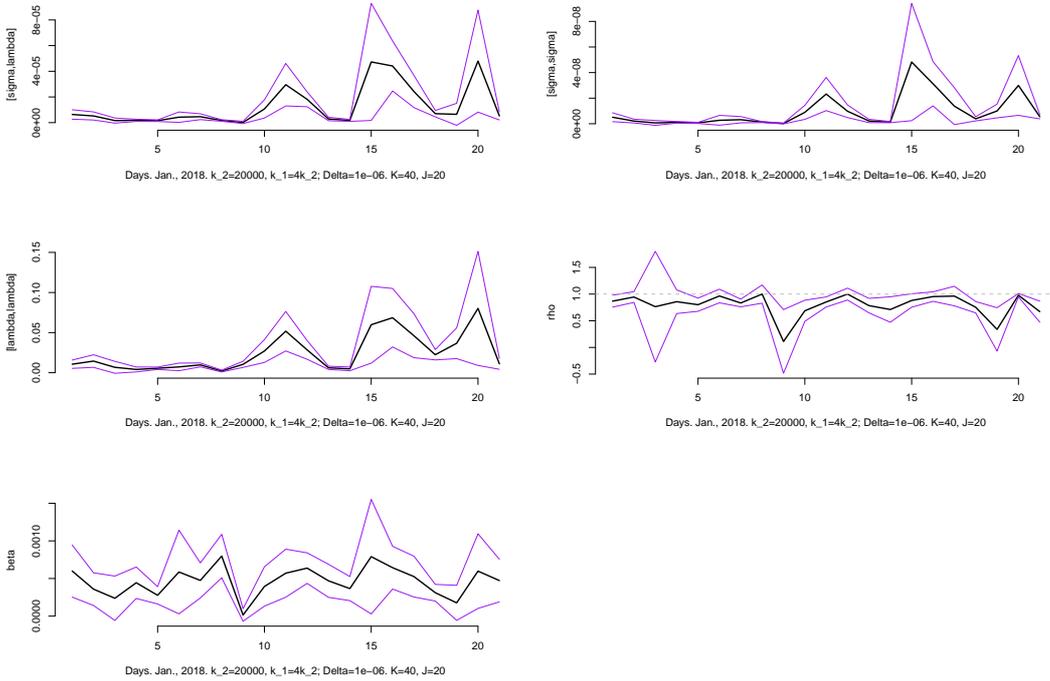} %
\caption{The Apple stock January 2.-31., 2018. Daily estimates of $[\sigma^2,\sigma^2]_{T}$, $[\sigma^2,\lambda]_{T}$ and $[\lambda,\lambda]_{T}$, as well as the parameters $\beta_T$ and $\rho_T$. The TSRV was used as the estimator of the integrated volatility. The purple lines are pointwise $95$ percent confidence bands computed using the observed asymptotic variance of~\citet{mykland2017assessment}, along with the delta method. In the plot with the daily estimates of $\rho_{T}$, the value $1$ is indicated by the dashed grey line.}  
\label{fig::empirical_5_RV} 
\end{figure}


\subsection{Using the volatility-intensity relationship to gain efficiency}
We have seen above that
\begin{equation}
\dd \theta_t = \beta_t \dd \lambda_t + \dd Z_t,\quad \text{and} \quad \dd\theta_{n,t} = \beta_{n,t} \dd \lambda_{n,t} + \dd Z_{n,t} ,
\notag
\end{equation}
where, in the latter equation, there is no normalization by $n$, hence the two equations are equivalent, and, once again, one can calculate as if $n$ were known. This is an ANOVA decomposition along the lines of~\cite{mykland2006anova}, but in this case, $\theta$ and $\lambda$ are unobserved. The process $\beta$ is estimated as above in this paper. The quantities $\theta$ and $\lambda$ can be estimated as spot
(instantaneous) quantities, as in~\cite{mz2new}. 

When microstructure is present in prices, but not in the observation times (as is the usual understanding), then $\widehat{\Lambda}_{n,t}$ has a faster rate of convergence than $\widehat{\Theta}_{n,t}$, and hence this is also true for $\widehat{\lambda}_{n,t}$ and $\widehat{\theta}_{n,t}$. The construction in~\cite{mz2new} uses $\widehat{\theta}_{n,t} = (\widehat{\Theta}_{n,t} - \widehat{\Theta}_{n,t-h_{n,\theta}})/h_{n,\theta}$, and similarly for $\widehat{\lambda}_{n,t}$, where $h_{n,\theta}$ and $h_{n,\lambda}$ are chosen to be (at least rate-) optimal, by the use of a variance-variance tradeoff. This leads to the rates for $\widehat{\theta}_{n,t}$ and $\widehat{\lambda}_{n,t}$ to be $n^{-1/8}$ and $n^{-1/4}$, respectively (when a rate optimal estimator of volatility is used, such as the S-TSRV which is used in this paper, or the multi-scale estimator of~\cite{lanmsrv06}, see also \cite{bibingermykland16} for the multivariate case and the connection to realised kernels, as well as the references therein. Finally, Lemma~\ref{lemma::consistency1} and Theorem~\ref{th::convergence.rates} provide for $\widehat{\beta}_{n,t}$ to have a rate of convergence of
$n^{-1/4}$, thus
\begin{equation}
\int_{0}^t \widehat{\beta}_{n,s}\,\dd \widehat{\lambda}_{n,s} - \int_{0}^t {\beta}_{n,s} \,\dd {\lambda}_{n,s} = O_p ( n^{-1/4} ) .
\label{eq:beta-lambda-est}
\end{equation}
If, as in our data, the residual $Z_t$ (or $\widehat{Z}_{n,t}$) is small, the question naturally occurs whether to prefer
$\widehat{\theta}_{n,t}$, with a low rate of convergence, or $\int_{0}^t \widehat{\beta}_{n,s} d \widehat{\lambda}_{n,s}$
with a much better rate of convergence, but with a bias of $Z_t$ (or $Z_{n,t}$). The conventional asymptotics-based answer to this question is that a slow convergence rate of $O_p( n^{-1/8} )$ is preferable to a much better convergence rate $O_p ( n^{-1/4} )$ to a limit with an $O_p(1)$ bias. In other words, pick $\widehat{\theta}_{n,t}$, even if $Z_t$ is small. 

This answer is uncomfortable, and has already caused some degree of argument in connection with volatility estimation, where there is an argument over whether intra-day estimators are always preferable, or whether to draw on longer time periods. Assumptions of stationarity will not help, and longer time periods are usually introduced by drawing on more highly specified models, such as ARCH and GARCH type models, going back to the
seminal papers of \cite{engle82} and  \cite{bollerslev86}.
There is a huge literature in this area, see, for example the survey by 
\cite{engleed95}.

Another path is to express {``}$Z_{n,t}$ is small{''} by a triangular array asymptotic regime whereby  $Z_{n,t} = o_p(1)$ as $n \to \infty$. 
Triangular array asymptotic regimes are often used close to a singularity, see, e.g.,~\citet{chanwei87} and~\citet{phillips1987towards} in the context of time series close to the unit root. In this context, it is often referred to as {`}local to unity asymptotics{'}. Under this regime, one can augment the estimate of $d\theta$ by adding an estimate of $\beta_t d \lambda_t$, giving rise to an estimate of the form 
\begin{equation}
\breve{\theta}_{n,t} = c_n \widehat{\theta}_{n,t} +  (1-c_n) \int_{0}^t \widehat{\beta}_{n,t}\,\dd \widehat{\lambda}_{n,t}.
\label{eq:improved-spot-vol}
\end{equation}
The tuning parameter $c_n$ should then be chosen to minimise the (random) mean squared error in $\breve{\theta}$, and in any case, $c_n \to 0$, thus improving the rate of convergence. A proper analysis of (\ref{eq:improved-spot-vol}) would require an assessment of the mean squared error of $\breve{\theta}_{n,t}$, which would presumably involve the estimation of $[Z_{n},Z_{n}]_t$, which brings us back to the ANOVA problem of~\citet{mykland2006anova}, but now with latent variables everywhere. This is beyond the scope of the present paper. 

If a reasonable solution can be found, similar methods may apply to a number of estimators that involve the estimation of spot volatility, such as leverage effect (Example~\ref{example::Leverage_effect_estimation}), volatility-of-volatility (in this paper, and also~\citet{vetter-volofvol} and~\citet{mykland2017assessment}), as well as regression, and ANOVA (\citet[Section~4.2, pp.~1424--1426]{mykland2009inference}, \citet[Section~4, pp.~268--273]{lanmodsel12}, \citet{reiss2015nonparametric}, and the references therein). 

\section{Conclusion}
This paper introduces a consistent estimator of the quadratic covariation between two non-observable spot-process semimartingales, derives the convergence rates of this estimator, and presents a central limit theorem for such estimators. The main theoretical contribution of the paper is this central limit theorem, a theorem that is applicable to a wide range of estimators based on triangular arrays of rolling quadratic covariations and second differencing of estimators of integrated spot processes. 

As recognised in much recent literature on estimation in high-frequency data, the assumption of exogenous observation times is often untenable, and one typically allows for dependency between the observation times and the price process. In this paper we have considered possible dependencies between the observation times and non-observable spot-processes associated with the price process, of which the spot volatility is a prime example. A simulation study shows that the estimators perform well with decent amounts of data. The empirical study of the Apple stock indicates that the observation times and the volatility process of this stock are positively correlated. 


\appendix 

\section{Notation and conditions}\label{app::appendixA}

We start by recalling some definitions from \cite{mykland2017assessment}.
\begin{definition}\label{def:order-in-prob-b}{\sc (Orders in Probability)}
For a sequence $\alpha^{(n)}_t$ of semimartingales, we say that $(\alpha^{(n)}_t) = O_p(1)$ if the sequence is tight, with respect to convergence in law relative to the Skorokhod topology on $\mathbb{D}$ \citep[Theorem VI.3.21, p.~350]{\JS}. For scalar random quantities, $O_p(\cdot)$ and $o_p( \cdot )$ are defined as usual, see, e.g.,~\cite[Appendix A]{pollard1984convergence}.
\end{definition}
\noindent

\begin{condition}\label{cond:modified-semimgs}
Let $\alpha^{(n)}_t$ and $\beta^{(n)}_t$ be sequences (in $n$) of semimartingales. Each of these sequences are (separately) assumed to be $O_p(1)$.
\end{condition}

\begin{definition} {\sc (Notation).}
\label{def:modulus}
The symbol $\mathbb{F}$ will refer to a collection of
nonrandom functions $f_{\cdot}^{(l,n)}$ c{\`a}dl{\`a}g on $[0,T]$, with $n \in \mathbb{N}$, and $l=1,{.}{.}{.}, 2K_n$, satisfying
\begin{equation}
| f_t^{(l,n)} |  \le 1 \mbox{ for all } t , l, \mbox{ and } n .
\notag
\end{equation}
Similarly, $\mathbb{G}$ will refer to a collection $g_t^{(l,n)}$ with the same size and properties.

Given $\mathbb{F}$ and $\mathbb{G}$,
set
\begin{equation}
\alpha^{(l,n)}_t = \int_0^t f_{s-}^{(l,n)} \,\dd \alpha^{(n)}_s \mbox{ and } \beta^{(l,n)}_t = \int_0^t g_{s-}^{(l,n)} \,\dd \beta^{(n)}_s \mbox{ for } l=1,{.}{.}{.}, 2K_n .
\notag
\end{equation}

For a random variable $X \in L^p(\Omega,\Falg,P)$ the norm is $\norm{X}_p = (\E\, \abs{X}^p)^{1/p}$. If $f(s)$ and $g(s)$ are defined on $[0,T]$ and $f(s) \leq c g(s)$ for all $0 \leq s \leq T$, for a fixed constant $c$, we write $f(s) \lesssim g(s)$. 

\end{definition}

\begin{condition} {\sc (Conditions for Rate-of-Convergence Statements and CLT)}
\label{cond:rate}  
The sequence of semimartingales $\alpha^{(n)}_t$, possibly defined on a sequence of filtrations $(\Falg_t^n)_{0 \le t \le T}$, is said to satisfy this condition if it can be written as $\alpha^{(n)}_t = \alpha^{(n)}_0 + \alpha^{(n,{\rm MG})}_t + \int_0^t b_s^{(n)} ds$, where for each $n$, $\alpha^{(n,{\rm MG})}_t$ is a square integrable martingale with predictable quadratic variation $\langle\alpha^{(n,{\rm MG})},\alpha^{(n,{\rm MG})}\rangle_t$ that is absolutely continuous, and $b^{(n)}_t$ and $\dd\langle \alpha^{(n,{\rm MG})},\alpha^{(n,{\rm MG})} \rangle_t/\dd t$ are locally bounded uniformly in $n$. 
\end{condition}
A single semimartingale $\alpha_t$ is said to satisfy Condition \ref{cond:rate} if the above is satisfied for the constant sequence $\alpha_t = \alpha^{(n)}_t$. Note also that Condition~\ref{cond:rate} implies that each $\alpha^{(n)}_t$ is an It{\^o}-semimartingale (see \citet[Eq.~(4.4.1), p.~114]{jacod2012discretization}).

\begin{definition}\label{assumption3} A processes $\xi_t$ is locally continuous in mean square if 
\begin{equation}
\sup_{0 \leq |t-s|\leq \delta }\E\,(\xi_{t} - \xi_{s})^2 \to 0,\quad\text{as}\quad \delta \to 0, 
\notag
\end{equation}
provided $t\vee s \leq \tau_n$, where $\tau_n$ is a stopping time such that $P(\tau_n = T )\to 1 $ as $n \to \infty$.
\end{definition}  

For the proof of Theorem~\ref{theorem::clt} contained in Appendix~\ref{app::CLT_proof} we need to be more specific about the construction of the probability space on which the sequence of processes $\alpha^{(n)}$, $\beta^{(n)}$, as well as potentially stochastic spot-processes related to these two, are defined. Since the result of Theorem~\ref{theorem::clt} is a {\it stable} convergence result, we need everything (except, possibly, microstructure noise) to be defined on the same probability space. Let $(\Omega,\Falg,\FF,P)$, with $\FF = (\Falg_t)_{0 \leq t \leq T}$ be a filtered probability space on which the processes are defined, and for each $n$ let $\FF^n =  (\Falg_t^n)_{0 \leq t \leq T}$ be a filtration on $(\Omega,\Falg)$. 

\begin{condition}\label{cond::clt1} A filtration $(\Falg_t)_{0 \leq t \leq T}$ on $(\Omega,\Falg)$ is said to satisfy the current condition if it is generated by $(\mu,W^{(1)},W^{(2)},\ldots)$ where $\mu$ is a Poisson random measure with deterministic compensator $\nu$ that is absolutely continuous as a function of time, and $W^{(1)},W^{(2)},\ldots$ are independent one-dimensional Wiener processes. 
\end{condition}
\begin{condition}\label{cond::clt2} For any finite family of $\Falg_t$-adapted bounded martingales $(X_1,\ldots,X_p)$ there is a sequence of $\Falg_t^n$-adapted martingales $(X_1^n,\ldots,X_p^n)$ such that $(X_1^n,\ldots,X_p^n) \to_p (X_1,\ldots,X_p)$.
\end{condition}
By \citet[Theorem~14.5.7, p.~360]{cohen2015stochastic} Condition~\ref{cond::clt1} is sufficient to represent the local martingales encountered in Theorem~\ref{theorem::clt}. Importantly, any martingale $X$ (resp.~$X^n$) adapted to $\FF$ (resp.~$\FF^n$) has a predictable quadratic variation process $\langle X,X\rangle$ (resp.~$\langle X^n,X^n\rangle$) that is absolutely continuous with respect to Lebesgue measure. 

\section{A stable central limit theorem for c{\`a}dl{\`a}g martingales}\label{app::stable_clt_jumps}
We find the following theorem and its corollaries to be convenient in applications. It is a generalisation of Theorem~2.28 in~\citet[p.~152]{myklandzhang2012} (originally stated in~\citet{zhang2001phd}), and is a special case of a theorem found in~\citet[ch.~IV.7]{jacod2003limit}, but with a different and perhaps more accessible statement and proof. The proof of the present theorem employs techniques from the proofs of both these earlier theorems. The formulation of our theorem also gives rise to Corollary~\ref{lemma::condition_ii}. This corollary provides alternative Lindeberg type conditions that are easier to check. 

We have a filtered probability space $(\Omega,\Falg,\FF,P)$, where $\FF = \{\Falg_t\}_{0\leq t\leq T}$. For each $n$, we have a filtration $\FF^n = \{\Falg_t^n\}_{0\leq t \leq T}$ and a $\Falg_t^n$-adapted square integrable martingale $M^n = \{M_t^n \colon 0 \leq t \leq T\}$. This is the martingale that we wish to show that converges stably in distribution. 

We assume that $\Falg$ is countably generated, that is, $\Falg = \sigma(A_1,A_2,\ldots)$ for a countable sequence $A_1,A_2,\ldots$ in $\Omega$. There is then a sequence $\{Y_m\}_{m\geq 1}$ of random variables that is dense in $L^1(\Omega,\Falg,P)$ \citep[Theorem~3, p.~382]{kolmogorov1970}. Set $N_{t}^m = \E\,(Y_m\mid \Falg_t)$, which is then a bounded martingale on $(\Omega,\Falg,\FF,P)$. Here are two results that can be found in \citet[pp.~114--115]{jacod1979calcul}, and also stated in \citet{jacod1997continuous}.
\begin{itemize}
\item[(a)] Every bounded martingale is the limit in $L^2$, uniformly in time, of a sequence of stochastic integrals with respect to a finite number of $N_{t}^m$.  
\item[(b)] If $\Galg_t$ is the smallest filtration with respect to which $(N_{t}^m)_{m\geq 1}$ is adapted, then $\Galg_t = \Falg_t$ up to $P$-null sets. 
\end{itemize}
The countably many $\Falg_t$-adapted bounded martingales $N_{t}^m$ play a role similar to the Wiener processes appearing in Condition~2.26 in~\citet[p.~151]{myklandzhang2012}. 


\begin{theorem}\label{theorem::th2.28_general} 
Assume Condition~\ref{cond::clt2}. Let $M^n = \{M_t^n \colon 0 \leq t \leq T\}$ be a sequence of locally square integrable martingales on $(\Omega,\Falg,P)$, adapted to $\Falg_t^n$ for each $n$. Suppose that there is an $\Falg_t$-adapted process $f_t$ such that
\begin{itemize}
\item[{\rm(i)}] $\langle M^n,M^n\rangle_t \to_p \int_0^t f_s^2\,\dd s$ for all $t$;
\item[{\rm(ii)}] $\int_{|x| > \eps}x^2\nu^n([0,T]\times \dd x)\to_p 0$ for all $\eps > 0$;
\item[{\rm(iii)}] $\langle M^n,X^n\rangle_t \to_p 0$ for all $t$ and all bounded martingales $X$ on $(\Omega,\Falg,\FF,P)$.
\end{itemize}
Then $M^n$ converges stably in distribution to $M_t = \int_0^t f_s \,\dd W_s$, where $W_s$ is a Wiener process defined on an extension of the original probability space.
\end{theorem}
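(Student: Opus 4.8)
The plan is to reduce this to a classical functional stable central limit theorem for martingales with asymptotically vanishing jumps, in the spirit of \citet[Ch.~IX]{\JS}, but phrased through the separating sequence $(N^m_t)_{m\geq 1}$. The target limit $M_t = \int_0^t f_s\,\dd W_s$ is $\Falg$-conditionally Gaussian with $\langle M,M\rangle_t = \int_0^t f_s^2\,\dd s$, and it is orthogonal to $\Falg_T$ in the sense that $\langle M,N^m\rangle \equiv 0$ for every $m$; stable convergence to such an $M$ is equivalent to joint convergence in law of $(M^n, N^{m_1},\ldots,N^{m_k})$ to $(M, N^{m_1},\ldots,N^{m_k})$ for every finite collection, because of property~(a) above (every bounded martingale on $(\Omega,\Falg,\FF,P)$ is an $L^2$-limit, uniformly in $t$, of stochastic integrals against finitely many $N^m$) and property~(b) (the $N^m$ generate $\Falg$). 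So the first step is to record this equivalence carefully: stable convergence of $M^n$ with respect to $\Falg$ is implied by convergence of the finite-dimensional joint laws against the countable test family $(N^m)$.

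Second, I would assemble, for each finite index set, the $(1+k)$-dimensional martingale $\mathbf{Z}^n = (M^n, N^{m_1},\ldots,N^{m_k})$ (note the $N^m$ do not depend on $n$, so the only moving part is the first coordinate) and verify the hypotheses of a standard multidimensional martingale CLT with a Lindeberg/jump condition, e.g.\ \citet[Theorem~IX.7.3 or Theorem~VIII.3.22]{\JS}: (1) the predictable quadratic covariation matrix $\langle \mathbf{Z}^n,\mathbf{Z}^n\rangle_t$ converges in probability to the (deterministic-in-form but random) limit with block entries $\int_0^t f_s^2\,\dd s$, $\langle N^{m_i},N^{m_j}\rangle_t$, and off-diagonal $\langle M^n, N^{m_i}\rangle_t \to_p 0$ --- these are exactly conditions (i), (iii), and the fixed second block; (2) a Lindeberg condition on the jumps of $\mathbf{Z}^n$. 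For the $M^n$ coordinate, condition~(ii) controls the big jumps of $M^n$: writing the jump measure of $M^n$ in terms of $\mu^n$ with compensator $\nu^n$, the quantity $\int_{|x|>\eps}x^2\,\nu^n([0,T]\times\dd x)\to_p 0$ dominates (after the usual predictable-projection argument and the bound $\E\sum_{s\le T}(\Delta M^n_s)^2 \mathbf 1\{|\Delta M^n_s|>\eps\}$) the needed Lindeberg sum; for the $N^m$ coordinates there is nothing to prove since they are fixed bounded martingales, but one must still check that cross jumps $\Delta M^n_s\,\Delta N^m_s$ do not obstruct C-tightness of the limit --- this follows because the limiting process $M$ is continuous, and in the standard theorem the Lindeberg condition on the vector forces asymptotic quasi-left-continuity/continuity of the limit in the $M$-direction while the $N^m$ directions retain their own (fixed) jump structure, and these are handled by the general theorem without extra work.

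Third, having obtained $(M^n, N^{m_1},\ldots,N^{m_k}) \Rightarrow (M, N^{m_1},\ldots,N^{m_k})$ with $M$ a continuous Gaussian martingale conditionally on $\Falg_T$, independent (given $\Falg_T$) of the $N^m$ and with the prescribed conditional variance, I would invoke the density of $(N^m)$ in $L^1$ together with property~(a) to upgrade this to convergence of $(M^n, Y)$ in law to $(M, Y)$ for every bounded $\Falg$-measurable $Y$, hence stable convergence in the sense of the Definition of stable convergence given in the body of the paper; the representation $M_t = \int_0^t f_s\,\dd W_s$ on an extension then follows from the standard representation of a continuous conditionally Gaussian martingale with absolutely continuous conditional variance (a conditional time change / Lévy characterization argument), using that $f_s$ is $\Falg_t$-adapted.

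The main obstacle I expect is the joint-convergence step, specifically establishing that the off-diagonal term $\langle M^n, X^n \rangle_t \to_p 0$ --- which is \emph{assumed} here as condition~(iii) but only against $\Falg_t$-adapted martingales $X$ on the \emph{limit} space --- genuinely transfers to asymptotic orthogonality against the approximating $\FF^n$-martingales supplied by Condition~\ref{cond::clt2}. One has $X^n_i \to_p X_i = N^{m_i}$, but $\langle M^n, X^n_i\rangle$ is a bracket of the pre-limit processes, so closing the gap requires a uniform $L^2$ (or $L^1$) control, e.g.\ via the Kunita--Watanabe inequality $|\langle M^n,X^n_i\rangle_t - \langle M^n, N^{m_i}\rangle_t| \le \langle M^n,M^n\rangle_t^{1/2}\,\langle X^n_i - N^{m_i}, X^n_i - N^{m_i}\rangle_t^{1/2}$, together with tightness of $\langle M^n,M^n\rangle_T$ from~(i) and a quantitative version of the approximation in Condition~\ref{cond::clt2}; and one must also make sense of $\langle M^n, N^{m_i}\rangle_t$ itself, since $M^n$ is $\FF^n$-adapted while $N^{m_i}$ is $\FF$-adapted --- here the clean route is to avoid mixed brackets entirely and instead feed the whole vector $(M^n, X^n_1,\ldots,X^n_k)$ (all $\FF^n$-adapted) into the CLT, obtain the limit of the $X^n_i$-block as $\langle N^{m_i},N^{m_j}\rangle$ by the convergence in Condition~\ref{cond::clt2} plus P-UT/tightness, and read off orthogonality of the limit's $M$-block from~(iii) after passing to the limit. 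Making that passage rigorous --- in particular getting the pre-limit cross-brackets $\langle M^n, X^n_i\rangle$ to converge to zero rather than merely to something one then argues is zero --- is where the real care is needed.
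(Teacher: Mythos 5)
Your overall architecture --- reducing stable convergence to joint convergence against the countable separating family $(N^m)$, using (ii) to control jumps, and representing the conditionally Gaussian limit as $\int_0^t f_s\,\dd W_s$ on an extension --- matches the aims of the paper's proof, and you correctly isolate the crux (brackets between $\FF^n$-adapted and $\FF$-adapted martingales). But the step you delegate to a ``standard multidimensional martingale CLT'' is a genuine gap. \citet[Theorem~VIII.3.22]{jacod2003limit} concerns convergence to processes with independent increments, i.e.\ deterministic characteristics, whereas here the limiting bracket $\int_0^t f_s^2\,\dd s$ is random, so that theorem cannot deliver the conclusion; and the results of \citet[Ch.~IX.7]{jacod2003limit} are essentially the statement being proved (the paper itself notes its theorem is a special case of results in that chapter) and presuppose brackets of $M^n$ against fixed reference martingales, which are not defined when $M^n$ is only $\FF^n$-adapted --- exactly the cross-filtration obstruction you flag in your last paragraph and leave unresolved. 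As written, the decisive step is therefore reduced to a theorem that either does not apply or begs the question.

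The paper closes this gap without any characteristics-convergence theorem and without ever forming a mixed bracket. From (i) it obtains process convergence and $C$-tightness of $\langle M^n,M^n\rangle$, hence tightness of $M^n$; (ii) kills the big jumps via a truncation plus Lenglart's inequality, so $M^n$ is $C$-tight and may be replaced by a bounded-jump version $M^{n,b}$; Condition~\ref{cond::clt2} supplies $\FF^n$-adapted martingales $\mathscr{N}^n \to_p \mathscr{N}=(N^m)_{m\geq 1}$, so the pair $(M^{n,b},\mathscr{N}^n)$ is tight and every subsequential weak limit is $(M,\mathscr{N})$ with the genuine $N^m$ in the second slot. Then \citet[Theorem~IX.1.17]{jacod2003limit} identifies $M$ as a local martingale for the filtration generated by $\mathscr{N}$, which equals $\FF$ up to null sets (fact (b)); the P-UT property of $M^n$, proved from tightness of $\langle M^n,M^n\rangle$ via Lenglart, carries $[M^n,M^n]$ to $[M,M]=\int_0^{\cdot} f_s^2\,\dd s$; and condition (iii) together with L{\'e}vy's theorem applied to $W=\int f_s^{-1/2}\,\dd M_s$ yields the $\Falg$-conditional Gaussian representation on an explicitly constructed extension, a subsequence argument completing the proof. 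To salvage your route you would need to replace the black-box CLT by precisely this kind of tightness/subsequence/identification argument, or else prove a quantitative strengthening of Condition~\ref{cond::clt2} strong enough to control the approximating brackets you introduce via Kunita--Watanabe.
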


\begin{proof} 
Convergence in probability implies convergence in distribution, so (i) implies that $\langle M^n,M^n\rangle_t \to_d \int_0^t f_s^2\,\dd s$ in the sense of finite dimensional distributions. Combining this with the facts that $\langle M^n,M^n\rangle_t$ is a non-decreasing process and has a non-decreasing and continuous limit, Theorem VI.3.37 in~\citet[p.~354]{jacod2003limit} yields process convergence of $\langle M^n,M^n\rangle$ to $\int_0^{\cdot} f_s^2\,\dd s$. The sample paths $t\mapsto \int_0^t f_s^2\,\dd s$ are continuous, so $\langle M^n,M^n\rangle$ is $C$-tight~\citep[Def.~3.25, p.~351]{jacod2003limit}, implying that $M^n$ is tight \citep[Theorem VI.4.12, p.~358]{jacod2003limit}. Condition~(ii) implies that $\sup_{s\leq T}|\Delta M_t^n| \to_p 0$, combined with the tightness of $M^n$ this implies that $M^n$ is $C$-tight~\citep[Lemma~VI.4.22, p.~360, and Theorem VI.3.26(iii), p.~351]{jacod2003limit}. Recall that $N_{t}^m =\E\, (Y^m\mid \Falg_t)$, and denote $\mathscr{N} = (N^m)_{m \geq 1}$. By Condition~\ref{cond::clt2} there is a sequence $\mathscr{N}^n = (N_1^n,N_2^n,\ldots)$, such that $\mathscr{N}^n \to_p \mathscr{N}$. Since $M^n$ is $C$-tight and $\mathscr{N}^n = (N_1^n,N_2^n,\ldots)$ is tight by Condition~\ref{cond::clt2}, Corollary~3.33 in~\citet[p.~353]{jacod2003limit} gives that $(M^n,\mathscr{N}^n)$ is tight. By Prokhorov{'}s theorem (see e.g.~\citet[Theorem 2.5(ii), p.~8]{vandervaart1998asymptotic}), this tightness entails that we can for any subsequence $n_k$ find a further subsequence $n_{k_j}$ such that 
\begin{equation}
(M^{n_{k_j}},\mathscr{N}^{n_{k_j}})\Rightarrow(M,\mathscr{N}).
\label{eq::conv1}
\end{equation}
For each $n$, write 
\begin{equation}
M_t^n 
= M_t^{n,b}  + xI\{|x| > 1\}\star(\mu^n - \nu^n)_t,
\label{eq::key_decomp1} 
\end{equation} 
in terms of the measure $\mu^n$ associated with the jumps of $M^n$, and its compensator $\nu^n$, and where $M_t^{n,b}$ is a local martingale with bounded jumps. For the decomposition in~\eqref{eq::key_decomp1}, see e.g., \citet[Eq.~(2.1.10), p.~29]{jacod2012discretization} and use that $M^n$, their $X$, is a martingale; or see Proposition~II.2.29 in~\citet[p.~82]{jacod2003limit}, and the fact that their $A \equiv 0$ in the martingale case. Since $xI\{|x| > 1\}\star\nu^n$ is the predictable compensator of $xI\{|x| > 1\}\star\mu^n$, it follows from Lenglart{'}s inequality \citep[Lemma~3.30(a), p.~35]{jacod2003limit} and Condition~(ii) that $xI\{|x| > 1\}\star\mu_t^n\to_p 0$ for all $t \in [0,T]$, thus 
\begin{equation}
\sup_{t \leq T} |M_t^n - M_t^{n,b}| \overset{p}\to 0. 
\label{eq::conv_in_prob}
\end{equation}
But~\eqref{eq::conv_in_prob} must also hold for any subsequence, so~\eqref{eq::conv1} and the Cram{\'e}r--Slutsky rules entail that $(M^{n_{k_j},b},\mathscr{N}^{n_{k_j}})$ converges in law to $(M,\mathscr{N})$. Since $M^{n,b}$ has bounded jumps $|\Delta M^{n,b}|\leq 1$, Theorem IX.1.17 in~\citet[p.~526]{jacod2003limit} gives that $M$ is a local martingale with respect to the filtration generated by $\mathscr{N}$ (hence the importance of fact (b), and where we use that Theorem IX.1.17 extends from the finite to the countable case, see~\citet[p.~586]{jacod2003limit}). 

We now want to show that $M^n$ is P-UT, because that will ensure joint convergence of $(M^{n_{k_l}},[M^{n_{k_l}},M^{n_{k_l}}])$. Let $H^n \in \mathscr{H}^n$, where $\mathscr{H}^n$ as well as the elementary stochastic integral $H^n\cdot M_t^n$ are as defined in~\citet[p.~377]{\JS}. Then $\E\, \abs{H^n\cdot M_t^n}^2 = \E\, (H^n)^2\cdot [M^n,M^n]_t \leq \E\, [M^n,M^n]_t = \E\, \langle M^n,M^n\rangle_t$. So by Lenglart{'}s inequality \citet[Lemma I.3.30(a), p.~35]{jacod2003limit}, for every $t$, and for any $H^n \in \mathscr{H}$, and for any $a,\eta > 0$,
\begin{equation}
P(\abs{H^n\cdot M_t^n}_t \geq a) \leq P(\sup_{0 \leq t \leq T}\abs{H^n\cdot M_t^n}_t \geq \eps)
\leq \frac{\eta}{a^2} 
+ P(\langle M^n,M^n\rangle_t \geq \eta).
\notag
\end{equation}
But since $\langle M^n,M^n\rangle_t$ is tight, this shows that $M^n$ is P-UT. Since $M^n$ is P-UT, Theorem VI.6.26 in~\citet[p.~384]{jacod2003limit} gives that $(M^{n_{k_l}},[M^{n_{k_l}},M^{n_{k_l}}])$ converges in law to $(M,[M,M])$; from continuity of $M$ we get that $[M,M] = \langle M,M\rangle$ \citep[Theorem~I.4.52, p.~55]{jacod2003limit}; and by Condition (i), $\langle M,M\rangle_t = \int_0^t f_s\,\dd s$.  

Assume without loss of generality that $f_s > 0$ (see~\citet[p.~152]{myklandzhang2012}), and set $W_t = \int_0^t f_s^{-1/2}\,\dd M_s$. Then $\langle W,W\rangle_t = t$ and  by Condition~(iii) $\langle W,X\rangle_t = \int_0^t f_s^{-1/2}\langle M,X\rangle_t = 0$ for any bounded martingale $X$. L{\'e}vy{'}s theorem \citep[p.~102]{jacod2003limit} then gives that $W$ is a Wiener process. Since $W$ is independent of $\Falg$ by Condition~(iii), we can realise $W$ on the extension $\widetilde{\Omega} = \Omega\times C[0,T]$, $\widetilde{\Falg} = \Falg\otimes \Balg$, $\widetilde{\Falg}_t = \cap_{s>t}\Falg_{s} \otimes \Balg_{s}$, $\widetilde{\Pr}(\omega,\dd x) = \pr(\dd\omega)Q(\omega,x)$, where $C[0,T]$ is the space of all continuous functions on $[0,T]$, and for $\omega$ fixed, $Q(\omega,\dd x)$ is the Wiener measure. Then $W(\omega,x)$ is a Wiener process for each $\omega$, and $M_t(\omega,x) = \int_0^t f_s(\omega)\,W(\omega,\dd x)$ is a continuous process on the extension, orthogonal to all bounded martingales on $(\Omega,\Falg,\FF,P)$, and $\langle M,M\rangle_t = \int_0^t f_s^2\,\dd s$ is $\Falg$-measurable by Condition~(i). Thus, $M$ is an $\Falg$-conditional Gaussian martingale on the extension. This proves the theorem for a subsequence $n_{k_j}$, but since the subsequence was arbitrary, the claim of the theorem follows (see corollary on p.~337 in~\citet{billingsley1995}, or \citet[Theorem~2.6, p.~20]{billingsley1999}).
\end{proof}

\begin{corollary}\label{lemma::condition_ii} Assume {\rm(i)} and {\rm(iii)} of Theorem~\ref{theorem::th2.28_general}. If Condition~{\rm (ii)} in that theorem is replaced by one of the following conditions,
\begin{itemize}
\item[{\rm(ii)}$^{\prime}$] $\E\, \sum_{s\leq t}|\Delta M_s^n|^2I\{|\Delta M_s^n| \geq \eps\} \to 0$ for all $\eps > 0$ and for all $t$;
\item[{\rm(ii)}$^{\prime\prime}$] $\sup_{t\leq T}|\Delta M_t^n| \to_p 0$, and $\E\,\sup_{t\leq T}|\Delta M_t^n|^2 < \infty$ for all $n$;
\end{itemize} 
the conclusion of Theorem~\ref{theorem::th2.28_general} still holds. 
\end{corollary}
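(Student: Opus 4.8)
The strategy is to locate the exact role played by condition~(ii) in the proof of Theorem~\ref{theorem::th2.28_general}, and to verify that each of (ii)$'$ and (ii)$''$ already supplies everything that role requires; the remainder of that proof then goes through word for word. Inspecting the proof of Theorem~\ref{theorem::th2.28_general}, condition~(ii) is invoked in exactly two places: first, to conclude that $\sup_{s\le T}|\Delta M_s^n|\to_p 0$, which together with tightness promotes $M^n$ to $C$-tightness; and second, to conclude that $xI\{|x|>1\}\star\mu_t^n\to_p 0$, hence $\sup_{t\le T}|M_t^n-M_t^{n,b}|\to_p 0$, which legitimises replacing $M^n$ by the bounded-jump martingale $M^{n,b}$. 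Everything else — the $C$-tightness of $\langle M^n,M^n\rangle$, the P-UT property, L\'evy's theorem, and the construction of the extension — uses only (i), (iii), and Condition~\ref{cond::clt2}. So it suffices to recover these two consequences from (ii)$'$ or from (ii)$''$.

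Assume (ii)$'$. For the first consequence, the event $\{\sup_{s\le T}|\Delta M_s^n|\ge\eps\}$ is contained in $\{\sum_{s\le T}|\Delta M_s^n|^2 I\{|\Delta M_s^n|\ge\eps\}\ge\eps^2\}$, so Markov's inequality and (ii)$'$ give $P(\sup_{s\le T}|\Delta M_s^n|\ge\eps)\le\eps^{-2}\E\sum_{s\le T}|\Delta M_s^n|^2 I\{|\Delta M_s^n|\ge\eps\}\to 0$. For the second consequence, take $\eps=1$: since $|x|\le x^2$ on $\{|x|>1\}$, one has $\E\,\bigl(|x|I\{|x|>1\}\star\mu_T^n\bigr)\le\E\sum_{s\le T}|\Delta M_s^n|^2 I\{|\Delta M_s^n|>1\}\to 0$, whence $|x|I\{|x|>1\}\star\mu_T^n\to_p 0$, and the same bound transfers to its predictable compensator $|x|I\{|x|>1\}\star\nu^n$ (the two increasing processes have equal expectations, and Lenglart's inequality \citep[Lemma~I.3.30(a), p.~35]{\JS} controls the supremum of an increasing process by that of its compensator). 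Recombining via $M^n-M^{n,b}=xI\{|x|>1\}\star(\mu^n-\nu^n)$ yields $\sup_{t\le T}|M_t^n-M_t^{n,b}|\to_p 0$.

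Assume instead (ii)$''$. The first consequence is literally the first half of (ii)$''$. For the second, note that by Condition~\ref{cond::clt1} the compensator $\nu$ has no time-atoms, so $M^n-M^{n,b}=xI\{|x|>1\}\star(\mu^n-\nu^n)$ is purely discontinuous with jumps $\Delta M_s^n I\{|\Delta M_s^n|>1\}$; consequently $[M^n-M^{n,b}]_T=\sum_{s\le T}(\Delta M_s^n)^2 I\{|\Delta M_s^n|>1\}$, a nonnegative variable that vanishes on $\{\sup_{t\le T}|\Delta M_t^n|\le 1\}$, an event of probability tending to $1$, so $[M^n-M^{n,b}]_T\to_p 0$. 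Since also $\sup_{t\le T}|\Delta(M^n-M^{n,b})_t|\le\sup_{t\le T}|\Delta M_t^n|\to_p 0$, a localisation argument (stop $M^n-M^{n,b}$ at the first time either its quadratic variation or its jump modulus exceeds $\delta$, on which event the stopped martingale has quadratic variation $\le 2\delta$ and is therefore genuinely square integrable, then apply Doob's inequality and let $\delta\to 0$) gives $\sup_{t\le T}|M_t^n-M_t^{n,b}|\to_p 0$; the auxiliary requirement $\E\sup_{t\le T}|\Delta M_t^n|^2<\infty$ enters here only to guarantee that the martingales arising in this step are square integrable, legitimising the domination estimates.

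Once both consequences are available under either replacement condition, the proof of Theorem~\ref{theorem::th2.28_general} applies verbatim from the point where the martingales $N_t^m$ are introduced, and the stable convergence follows. The main obstacle is the bookkeeping in the second consequence: one must pass carefully from convergence in probability of optional jump functionals to the corresponding statements for their predictable compensators and quadratic variations — it is precisely this passage that forces the expectation form of (ii)$'$ and the integrability built into (ii)$''$ — and one must check that the atomlessness of $\nu$ from Condition~\ref{cond::clt1} is genuinely what identifies the jumps of $M^n-M^{n,b}$.
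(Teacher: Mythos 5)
Your treatment of (ii)$'$ is correct and essentially parallel to the paper's (the paper deduces (ii) directly from the identity $\E\,\sum_{s\leq t}|\Delta M_s^n|^2I\{|\Delta M_s^n|\geq\eps\}=\E\,\int_{|x|\geq\eps}|x|^2\nu^n([0,T]\times\dd x)$, while you re-derive the two consequences of (ii); either route is fine). The gap is in (ii)$''$. Your localization step is quantitatively wrong: if you stop $N^n=M^n-M^{n,b}$ at the first time its quadratic variation or jump modulus exceeds $\delta$, the stopped process still contains the jump at the stopping time, so its quadratic variation is only bounded by $\delta+(\Delta N^n_\sigma)^2$, and under (ii)$''$ the quantity $\E\,\sup_{t\leq T}|\Delta M_t^n|^2$ is merely finite for each $n$ (neither small nor uniformly bounded), so Doob's inequality yields nothing that vanishes. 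More fundamentally, the implication you rely on --- that $[N^n]_T\to_p 0$ together with $\sup_t|\Delta N^n_t|\to_p 0$ forces $\sup_t|N^n_t|\to_p 0$ for local martingales --- is false: the compensator part $xI\{|x|>1\}\star\nu^n$ of $N^n$ is continuous and of finite variation, hence contributes nothing to $[N^n]$, yet on the high-probability event that no big jump occurs one has $N^n_t=-xI\{|x|>1\}\star\nu^n_t$, which need not be small (a compensated jump of size $n$ occurring with probability of order $1/n$ gives $[N^n]_T\to_p0$ and vanishing jumps, while the compensator drift, and hence $\sup_t|N^n_t|$, stays of order one).

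Controlling exactly this compensator term is the substance of the paper's argument, which your proposal never supplies: the paper bounds the optional part by $\sup_{s\leq t}|\Delta M_s^n|\cdot\sum_{s\leq t}I\{|\Delta M_s^n|>1\}$, with the count $O_p(1)$ because $M^n$ is P-UT (a consequence of (i)), and then transfers smallness from $|x|I\{|x|>1\}\star\mu^n$ to $|x|I\{|x|>1\}\star\nu^n$ via Lenglart's inequality in the form carrying the extra term $\E\,\sup_{s\leq t}|\Delta M_s^n|$, which is bounded by H{\"o}lder using precisely the second-moment part of (ii)$''$; so that hypothesis is doing real work, not merely ``legitimising domination estimates'' as you suggest. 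A further, smaller point: your appeal to Condition~\ref{cond::clt1} for atomlessness of $\nu^n$ is not available here --- Theorem~\ref{theorem::th2.28_general} and the corollary assume only Condition~\ref{cond::clt2}, and Condition~\ref{cond::clt1} concerns the filtration $\FF$ rather than $\FF^n$ --- and the paper's proof does not need it.
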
 
\begin{proof} For (ii)$^{\prime}$: By Proposition~II.1.28 (p.~72) and Theorem~I.3.17 (p.~32) in~\citet{jacod2003limit}, we have that
\begin{equation}
\E\, \sum_{s\leq t}|\Delta M_s^n|^2I\{|\Delta M_s^n| \geq \eps\} = \E\, \int_{|x|\geq \eps}|x|^2\mu^n([0,T]\times \dd x) = \E\, \int_{|x|\geq \eps}|x|^2\nu^n([0,T]\times \dd x),
\notag
\end{equation} 
which proves that (ii)$^{\prime}$$\Rightarrow$(ii). For (ii)$^{\prime\prime}$: We must show that (ii)$^{\prime\prime}$ implies~\eqref{eq::conv_in_prob}. Using the the triangle inequality and the fact that $\nu^n$ is a (non-negative) measure 
\begin{equation}
\begin{split}
|M_t^n - M_t^{n,b}| 
& \leq \sum_{s\leq t}|\Delta M_s^n|I\{|\Delta M_s^n|>1\} + |x|I\{|x| > 1\}\star \nu_t^n\\
& \leq \sup_{s\leq t}|\Delta M_s^n| \sum_{s\leq t}I\{|\Delta M_s^n|>1\} + |x|I\{|x| > 1\}\star \nu_t^n. 
\end{split}
\label{eq::can_we_show_this}
\end{equation}
Since $\langle M^n,M^n\rangle_t$ is tight, $M^n$ is P-UT, and we have that $\sum_{s\leq t}I\{|\Delta M_s^n|>1\} = O_p(1)$ for all $t > 0$~\citep[Theorem~VI.6.16, p.~380]{jacod2003limit}, so the first term on the right in~\eqref{eq::can_we_show_this} tends to zero in probability by the Cram{\'e}r--Slutsky rules. For the second term on the right, since $|x|I\{|x| > 1\}\star \nu_t^n$ is the predicable compensator of  the adapted process $\sum_{s\leq t}|\Delta M_s^n|I\{|\Delta M_s^n|>1\} = |x|I\{|x| > 1\}\star \mu_t^n$, Lenglart{'}s inequality~\citep[Lemma~I.3.30(b), p.~35]{jacod2003limit} gives that for all $\eps,\eta > 0$, 
\begin{equation}
\begin{split}
\pr( |x|I\{|x| > 1\}\star \nu_t^n \geq \eps) 
 \leq \frac{1}{\eps}\big(\eta + \E\, \sup_{s\leq t} |\Delta M_s^n|  \big) + P( |x|I\{|x| > 1\}\star \mu_t^n \geq \eta).
\end{split}
\notag
\end{equation}
As we saw above $P( |x|I\{|x| > 1\}\star \mu_t^n \geq \eta) \to 0$. For all $\sigma > 0$, by H{\"o}lder{'}s inequality 
\begin{equation}
\begin{split}
\E\, \sup_{s\leq t} |\Delta M_s^n|  & \leq \E\, \sup_{s\leq t} |\Delta M_s^n|I\{|\Delta M_s^n| \geq \sigma\} + \sigma\\ 
& \leq (\E\,\sup_{s\leq t} |\Delta M_s^n|^2)^{1/2}\,P(\sup_{s\leq t} |\Delta M_s^n| \geq \sigma)^{1/2} + \sigma \to \sigma,
\end{split}
\notag
\end{equation}
as $n \to \infty$. Since $\eps,\eta,\sigma$ were arbitrary, $|x|I\{|x| > 1\}\star \nu_t^n$ converges in probability to zero for all $t > 0$.     
\end{proof}

\begin{remark}\label{remark::cont_to_PUT} In the proof of Theorem~\ref{theorem::th2.28_general} we use that if $M^n$ is a sequence of local square integrable martingales, that $\langle M^n,M^n\rangle_t \to \langle M,M\rangle_t$ for all $t$ as $n \to \infty$, and that $\langle M,M\rangle_t$ is continuous, then $M^n$ is P-UT. This is a useful implication that, perhaps because it is deemed obvious, is not spelled out explicitly in \citet[ch.~VI.6]{jacod2003limit}. Using this implication, an immediate corollary to Proposition~6 in \citet[p.~12]{mykland2017assessmentsupplement} is: If $M^n$ converges in law to $M$, and $\langle M^n,M^n \rangle_t \to_p V$, with $V$ being continuous, then $M^n$ converges $\Galg$-stably in law, where $\Galg = \sigma(V)$. For several other results associated with the P-UT property, see~\citet[Appendix D]{mykland2017assessmentsupplement}.  
\end{remark}

\section{Proof of the claims in Example~\ref{example.model}}\label{app::example.model}
Assume that $\xi^n = n\xi$, $\nu_n = \sqrt{n}\nu$ and that $0 < \beta \leq \beta_n \to \infty$ as $n\to \infty$. Then, for each $t\in [0,T]$,
\begin{equation}
\frac{1}{n}\Lambda_{n,t} \overset{p}\to \xi t,\quad \text{and}\quad
\frac{1}{n}[\sigma^2,\lambda_n]_t \overset{p}\to 
\rho\gamma\nu \xi^{1/2} \int_0^t \sigma_s\,\dd s,
\label{eq::claims_in_example}
\end{equation} 
as $n \to \infty$. We now prove~\eqref{eq::claims_in_example}: The expectation of the intensity is $\E\,\lambda_{n,t} = \xi_n$, and
\begin{equation} 
\begin{split}
\E\, \big|\frac{1}{n}(\lambda_{t,n} - \xi_n)\big|^2 & 
= \frac{1}{n^2}\,\E\, \big|\nu_n\int_0^t \lambda_{n,s}^{1/2}e^{-\beta_n(t-s)}\,\dd B_s\big|^2
= \frac{\nu_n^2}{n^2}\,\E\, \int_0^t \lambda_{n,s}e^{-2\beta_n(t-s)}\,\dd s\\
& = \frac{\nu_n^2}{n^2}\int_0^t \xi_{n}e^{-2\beta_n(t-s)}\,\dd s
= \frac{\nu_n^2}{n^2}\frac{\xi_n}{2\beta_n}\big(1 - e^{-2\beta_n t}\big). 
\end{split}
\label{eq::chebyshev.step1}
\end{equation}
Note that
\begin{equation}
\begin{split}
\E\,|\lambda_{t,n}/n|^2 & = \E\,|\lambda_{t,n}/n - \xi + \xi|^2
 = \E\,|\lambda_{t,n}/n - \xi|^2 + 2\,\E\,|\lambda_{t,n}/n - \xi|\xi+ \xi^2\\
& \leq \E\,|\lambda_{t,n}/n - \xi|^2 + 2\,\big(\E\,|\lambda_{t,n}/n - \xi|^2\big)^{1/2}\xi+ \xi^2,
\end{split}
\notag
\end{equation}
and from~\eqref{eq::chebyshev.step1}, $\E\,|\lambda_{t,n}/n - \xi|^2 = \nu^2(2\beta_n)^{-1}\big(1 - e^{-2\beta_n t}\big)$, from which it follows that for each $t$, $\sup_n \E\,|\lambda_{t,n}/n|^2 <\infty$. By Chebyshev{'}s inequality we get that for each $t$ the sequence of random variables $\{\lambda_{t,n}/n\}_{m\geq 1}$ are uniformly integrable (see Eq.~(25.13) in \citet[p.~338]{billingsley1995}). 
Moreover, from the above we see that $\E\,|\lambda_{t,n}/n| \leq (\nu^2/\beta + \xi^2)^{1/2}$ for all $t$ and $n$, and the right hand side is trivially integrable on $[0,T]$. Hence, the sequence of stochastic processes $\{\lambda_{n,s}/n\}_{n\geq 1}$ satisfies the conditions of \citet[Proposition II.5.2, p.~85]{andersen1993statistical}, and the first part of~\eqref{eq::claims_in_example} follows. For the second part we have that 
\begin{equation}
\begin{split}
\E\,|\sigma_t \lambda_{n,t}^{1/2}/\sqrt{n}|^2 & = \E\,\sigma_t^2 \lambda_{n,t}/n
= \E\,|(\sigma_t^2-\alpha + \alpha)| |(\lambda_{n,t}/n - \xi + \xi)|\\
& = \E\,|(\sigma_t^2-\alpha)(\lambda_{n,t}/n - \xi)| 
+ \E\,|(\sigma_t^2-\alpha)|\xi + \E\,|(\lambda_{n,t}/n - \xi)|\alpha + \alpha\xi,
\end{split}
\notag
\end{equation}  
which by three applications of H{\"o}lder{'}s inequality and the It{\^o} isometry is seen to be bounded by a constant, hence $\sup_n \E\,|\sigma_t \lambda_{n,t}^{1/2}/\sqrt{n}|^2 <\infty$, and uniform integrability of the random variables $\sigma_t \lambda_{n,t}^{1/2}/\sqrt{n}$ follows. Since $\E\,|\sigma_s\lambda_{n,s}^{1/2}/\sqrt{n}| \leq (\E\,|\sigma_s\lambda_{n,s}^{1/2}/\sqrt{n}|^2)^{1/2}$ for all $s$ and $n$, and a constant is integrable on $[0,T]$, so the second part of~\eqref{eq::claims_in_example} follows by the same argument as above.

\section{Notes on Theorem~\ref{theorem::Th3.multivariate}}\label{app::QCV.proof1}
The proof follows with trivial adjustments from \citet[Theorem~3, p.~208]{mykland2017assessment}. Note that the convergence rates change due to our Theorem~\ref{th::convergence.rates}. Recall the setup in \eqref{eq::GH.decomp1}, that is $\widehat{\Theta}_{(s,t]} - \Theta_{(s,t]} = M_{n,t}^{\theta} - M_{n,s}^{\theta} + e_{n,t}^{\theta} - e_{n,s}^{\theta}$ and $\widehat{\Lambda}_{(s,t]} - \Lambda_{(s,t]}  = M_{n,t}^{\lambda} - M_{n,s}^{\lambda} + e_{n,t}^{\lambda} - e_{n,s}^{\lambda}$. \citet[Theorem~3, p.~208]{mykland2017assessment} and the convergence rates from Theorem~\ref{th::convergence.rates} give 
\begin{equation}
\begin{split}
\qv_{B,K}(\widehat{\Theta},\widehat{\Lambda}) & = \overline{{\rm QV}}_{B,K}(\widehat{\Theta},\widehat{\Lambda}) + R_{n,k}(\Theta,\Lambda) + O_p\big((K_n\Delta_n + n^{-\alpha}) R_{n,k}(\Lambda)^{1/2} \big)\\ 
& \qquad \qquad \qquad \qquad + O_p\big((K_n\Delta_n + n^{-\beta}) R_{n,k}(\Theta)^{1/2} \big),
\end{split}
\label{eq::qv_before_rates}
\end{equation}
where $R_{n,k}(\Theta) = R_{n,k}(\Theta,\Theta)$ and 
\begin{equation}
R_{n,k}(\Theta,\Lambda)  = \frac{1}{K}\sum_{i=K}^{B-K}
(e_{n,t_{i+K}}^{\theta} - e_{n,t_{i}}^{\theta} - ( e_{n,t_{i}}^{\theta} - e_{n,t_{i-K}}^{\theta}))(e_{n,t_{i+K}}^{\lambda} - e_{n,t_{i}}^{\lambda} - ( e_{n,t_{i}}^{\lambda} - e_{n,t_{i-K}}^{\lambda})),
\notag
\end{equation}
while $\overline{\qv}_{B,K}(\widehat{\Theta},\widehat{\Lambda})$ is given by
\begin{equation}
\begin{split}
\overline{\qv}_{B,K}(\widehat{\Theta},\widehat{\Lambda})  & = 
2[M_n^{\theta},M_n^{\lambda}]_{T-} + \frac{2}{3}(K_n\Delta_n)^2\bigg( 1 - \frac{1}{K_n^2}\bigg)[\theta,\lambda]_{T-} + O_p\big(n^{-(\alpha+\beta)}(K_n\Delta_n)^{1/2}\big)\\
& \qquad  + \Delta_n^2\int_0^{T-}\bigg\{\bigg(\frac{t^{*}(s) - s}{\Delta_n} \bigg)^2 + \bigg(\frac{s - t_{*}(s)}{\Delta_n} \bigg)^2   \bigg\}\,\dd [\theta,\lambda]_{s} + O_p\big( (K_n\Delta_n)^{5/2}\big)\\
& \qquad + \Delta_n\int_0^{T-} \bigg( 1 - 2\frac{s - t_{*}(s)}{\Delta_n} \bigg)\,\dd [\theta,M_n^{\lambda}]_s + O_p\big(n^{-\beta}(K_n\Delta_n)^{3/2} \big)\\
& \qquad + \Delta_n\int_0^{T-} \bigg( 1 - 2\frac{s - t_{*}(s)}{\Delta_n} \bigg)\,\dd [\lambda,M_n^{\theta}]_s 
+ O_p\big(n^{-\alpha}(K_n\Delta_n)^{3/2} \big). 
\end{split} 
\notag
\end{equation} 
We now consider two different sets of restrictions on the edge effect. All other cases can be deduced from~\eqref{eq::qv_before_rates}. For all $t$ on a given grid, 
\begin{equation}
\begin{split}
\mbox{ Case (1): } &e_t^{\theta} = o_p((K_n\Delta_n)^{1/2}n^{-\alpha}), \quad\text{and}\quad
e_t^{\lambda} = o_p((K_n\Delta_n)^{1/2}n^{-\beta});\\
\mbox{ Case (2): } & e_t^{\theta} = o_p((K_n\Delta_n)^{3/4} n^{-\alpha}), \quad\text{and}\quad 
e_t^{\lambda} = o_p((K_n\Delta_n)^{3/4} n^{-\beta}).
\end{split}
\notag
\end{equation}
Under Case (1) we have that~\eqref{eq::qv_before_rates} is 
\begin{equation}
\begin{split}
\qv_{B,K}(\widehat{\Theta},\widehat{\Lambda})  = 2[M_n^{\theta},M_n^{\lambda}]_{T-} + \frac{2}{3}(K_n\Delta_n)^2[\theta,\lambda]_{T-} +  o_p\big((K_n\Delta_n)^2\big) + o_p(n^{-(\alpha + \beta)}).
\end{split}
\notag
\end{equation}  
While under Case (2) we find that~\eqref{eq::qv_before_rates} is 
\begin{equation}
\begin{split}
\qv_{B,K}(\widehat{\Theta},\widehat{\Lambda})  & = 2[M_n^{\theta},M_n^{\lambda}]_{T-} + \frac{2}{3}(K_n\Delta_n)^2[\theta,\lambda]_{T-}\\ 
& \qquad \qquad \qquad \qquad +  O_p\big((K_n\Delta_n)^{5/2}\big) + O_p((K_n\Delta_n)^{1/2}n^{-(\alpha + \beta)}).
\end{split}
\notag
\end{equation}  
It thus appears that the more stringent conditions on the edge effects in Case (2) are needed for the convergence rates of Theorem~\ref{th::convergence.rates} to {`}enter{'} Theorem~\ref{theorem::Th3.multivariate}. Do note, however, that this may be an artefact of the Cauchy--Schwarz inequality used in deriving~\eqref{eq::qv_before_rates}.

\section{Proof of Theorem~\ref{th::convergence.rates}}\label{app::conv.rates}
Recall that
\begin{equation}
t_{*,l} = t_{*,l}(s) = \max\{t_{i+K} \colon t_{i+K}\leq s , i \equiv l[2K] \}.
\label{eq::max_time} 
\end{equation}
It is enough to show the result when the sequences $\alpha^{(n)}$ and $\beta^{(n)}$ are local square-integrable martingales. Let
\begin{equation}
\begin{split}
Z_{n,l}(s) & =  \sum_{t_{i+K}\leq s ,\, i \equiv l[2K]}(\alpha^{(l,n)}_{t_{i+K}} - \alpha^{(l,n)}_{t_{i-K}})(\beta^{(l,n)}_{t_{i+K}} - \beta^{(l,n)}_{t_{i-K}})\\  
& \qquad \qquad \qquad \qquad  + (\alpha^{(l,n)}_{s} - \alpha^{(l,n)}_{t_{*,l}})(\beta^{(l,n)}_{s} -  \beta^{(l,n)}_{t_{*,l}}) - [\alpha^{(l,n)},\beta^{(l,n)}]_{s},
\end{split}
\notag
\end{equation}
and set $Z_n(s) = K^{-1}\sum_{l=1}^{2K}Z_{n,l}(s)$. Let the stopping time $\tau$ and the positive constants $a_{+}$ and $b_{+}$ be such that, for $t \le \tau$,  $\dd\langle \alpha^{(n)}, \alpha^{(n)} \rangle_t / \dd t \leq a_{+}^2$ and
$\dd\langle\beta^{(n)} , \beta^{(n)} \rangle_t/\dd t \leq b_{+}^2$. In particular,
$|\dd\langle \alpha^{(n)} \beta^{(n)} \rangle_t/\dd t| \le a_{+}b_{+}$,
by the Kunita--Watanabe inequality (see e.g.,~\cite[Theorem~II.25, p.~69]{protter2004}). By It{\^o}{'}s lemma we have that 
\begin{equation}
\begin{split}
  \langle Z_{n,l_1},Z_{n,l_2} \rangle_{\tau}  
 & = \int_0^{\tau}  (\alpha_s^{(l_1,n)} - \alpha_{t_{*,l_1}}^{(l_1,n)})(\alpha_s^{(l_2,n)} - \alpha_{t_{*,l_2}}^{(l_2,n)})\,\dd\langle\beta^{(l_1,n)},\beta^{(l_2,n)} \rangle_{s}[2]\\
& \qquad + \int_0^{\tau}  (\alpha_s^{(l_1,n)} - \alpha_{t_{*,l_1}}^{(l_1,n)})(\beta_s^{(l_2,n)} - \beta_{t_{*,l_2}}^{(l_2,n)})\,\dd\langle\beta^{(l_1,n)},\alpha^{(l_2,n)} \rangle_{s}[2].
\end{split}
\notag
\end{equation}
From which 
\begin{equation}
\begin{split}
& \E\,\langle Z_n,Z_n\rangle_{\tau}  = \frac{1}{4K^2}\sum_{l_1=1}^{2K}\sum_{l_2=1}^{2K}\E\,\langle Z_{n,l_1},Z_{n,l_2}\rangle_{\tau}\\
&  \qquad = \frac{1}{4K^2}\sum_{l_1=1}^{2K}\sum_{l_2=1}^{2K}\E\,\bigg\{\int_0^{\tau}  (\alpha_s^{(l_1,n)} - \alpha_{t_{*,l_1}}^{(l_1,n)})(\alpha_s^{(l_2,n)} - \alpha_{t_{*,l_1}}^{(l_2,n)})\,\dd\langle\beta^{(l_1,n)},\beta^{(l_2,n)} \rangle_{s}[2]\\
&  \qquad\qquad\qquad + \int_0^{\tau}  (\alpha_s^{(l_1,n)} - \alpha_{t_{*,l_1}}^{(l_1,n)})(\beta_s^{(l_2,n)} - \beta_{t_{*,l_1}}^{(l_2,n)})\,\dd\langle\beta^{(l_1,n)},\alpha^{(l_2,n)} \rangle_{s}[2]\bigg\}.
\end{split}
\label{eq::ref_terms}
\end{equation}
Changing the order of summation and integration we have that, 
\begin{equation}
\begin{split}
\sum_{l_1=1}^{2K}&\sum_{l_2=1}^{2K}  \E\,\int_0^{\tau}  (\alpha_s^{(l_1,n)} - \alpha_{t_{*,l_1}}^{(l_1,n)})g_s^{(l_1,n)}(\alpha_s^{(l_2,n)} - \alpha_{t_{*,l_1}}^{(l_2,n)})g_s^{(l_2,n)}\,\dd\langle\beta^{(n)},\beta^{(n)} \rangle_{s}\\
& = \int_0^{T}\E\, \sum_{l_1=1}^{2K}\sum_{l_2=1}^{2K}  (\alpha_{s\wedge \tau}^{(l_1,n)} - \alpha_{t_{*,l_1}\wedge\tau}^{(l_1,n)})g_{s}^{(l_1,n)}(\alpha_{s\wedge\tau}^{(l_2,n)} - \alpha_{t_{*,l_1}\wedge\tau}^{(l_2,n)})g_s^{(l_2,n)}\,\dd\langle\beta^{(n)},\beta^{(n)} \rangle_{s}\\
& = \int_0^{T}\E\, \bigg[\sum_{l=1}^{2K}  (\alpha_{s\wedge \tau}^{(l,n)} - \alpha_{t_{*,l}\wedge\tau}^{(l,n)})g_{s}^{(l,n)}\bigg]^2\,\dd\langle\beta^{(n)},\beta^{(n)} \rangle_{s}\\ 
& \leq b_{+}^2 \int_0^{T}\E\, \bigg(\sum_{l=1}^{2K}  (\alpha_{s\wedge \tau}^{(l,n)} - \alpha_{t_{*,l}\wedge\tau}^{(l,n)})g_{s}^{(l,n)}\bigg)^2\,\dd s,
\end{split}
\notag 
\end{equation} 
and similarly for the second term on the right in~\eqref{eq::ref_terms}. For the third and fourth terms on the right in~\eqref{eq::ref_terms} we use that $\dd\langle\beta^{(n)},\alpha^{(n)} \rangle_{s}/\dd s \leq a_{+}b_{+}$ for $s\leq \tau$, and H{\"o}lder{'}s inequality,   
\begin{equation}
\begin{split}
\sum_{l_1=1}^{2K}&\sum_{l_2=1}^{2K} \E\, \int_0^{\tau}  (\alpha_s^{(l_1,n)} - \alpha_{t_{*,l_1}}^{(l_1,n)})g_{s}^{(l_1,n)}(\beta_s^{(l_2,n)} - \beta_{t_{*,l_1}}^{(l_2,n)})f_{s}^{(l_2,n)}\,\dd\langle\beta^{(n)},\alpha^{(n)} \rangle_{s}\\
& = \int_0^{T}\E\,\sum_{l_1=1}^{2K}\sum_{l_2=1}^{2K}   (\alpha_{s\wedge\tau}^{(l_1,n)} - \alpha_{t_{*,l_1},\wedge\tau}^{(l_1,n)})g_{s}^{(l_1,n)}(\beta_{s\wedge\tau}^{(l_2,n)} - \beta_{t_{*,l_1}\wedge\tau}^{(l_2,n)})f_{s}^{(l_2,n)}\,\dd\langle\beta^{(n)},\alpha^{(n)} \rangle_{s}\\
& = \int_0^{T}\E\,\bigg(\sum_{l_1=1}^{2K} (\alpha_{s\wedge\tau}^{(l_1,n)} - \alpha_{t_{*,l_1},\wedge\tau}^{(l_1,n)})g_{s}^{(l_1,n)}\bigg)\bigg( \sum_{l_2=1}^{2K}(\beta_{s\wedge\tau}^{(l_2,n)} - \beta_{t_{*,l_1}\wedge\tau}^{(l_2,n)})f_{s}^{(l_2,n)}\bigg)\,\dd\langle\beta^{(n)},\alpha^{(n)} \rangle_{s}\\
& \leq a_{+}b_{+} \int_0^{T}\E\,\bigg(\sum_{l_1=1}^{2K} (\alpha_{s\wedge\tau}^{(l_1,n)} - \alpha_{t_{*,l_1},\wedge\tau}^{(l_1,n)})g_{s}^{(l_1,n)}\bigg)\bigg( \sum_{l_2=1}^{2K}(\beta_{s\wedge\tau}^{(l_2,n)} - \beta_{t_{*,l_1}\wedge\tau}^{(l_2,n)})f_{s}^{(l_2,n)}\bigg)\,\dd s\\
& \leq a_{+}b_{+} \int_0^{T}\norm{\sum_{l_1=1}^{2K} (\alpha_{s\wedge\tau}^{(l_1,n)} - \alpha_{t_{*,l_1},\wedge\tau}^{(l_1,n)})g_{s}^{(l_1,n)}}_2\norm{\sum_{l_2=1}^{2K}(\beta_{s\wedge\tau}^{(l_2,n)} - \beta_{t_{*,l_1}\wedge\tau}^{(l_2,n)})f_{s}^{(l_2,n)}}_2\,\dd s,
\end{split}
\notag
\end{equation}
and similarly for the fourth term. Now, all the action takes place in expressions of the form
\begin{equation}
\begin{split}
%
& \norm{\sum_{l=1}^{2K}  (\alpha_{s\wedge \tau}^{(l,n)}  - \alpha_{t_{*,l}\wedge\tau}^{(l,n)})g_{s}^{(l,n)}}_2^2  
= \E\,\sum_{l=1}^{2K}  (\alpha_{s\wedge \tau}^{(l,n)} - \alpha_{t_{*,l}\wedge\tau}^{(l,n)})g_{s}^{(l,n)}
\sum_{l=1}^{2K}  (\alpha_{s\wedge \tau}^{(l,n)} - \alpha_{t_{*,l}\wedge\tau}^{(l,n)})g_{s}^{(l,n)}\\
& \qquad\qquad = \sum_{l_1=1}^{2K}\sum_{l_2=1}^{2K}\E\,  (\alpha_{s\wedge \tau}^{(l_1,n)} - \alpha_{t_{*,l_1}\wedge\tau}^{(l_1,n)})
(\alpha_{s\wedge \tau}^{(l_2,n)} - \alpha_{t_{*,l_2}\wedge\tau}^{(l_2,n)})g_{s}^{(l_1,n)}g_{s}^{(l_2,n)}.
\end{split}
\label{eq::key2}
\end{equation}
Since $|f_s^{(l,n)}|\leq 1$ and $|g_s^{(l,n)}|\leq 1$ for all $s$, $n$, and $l$, we have that 
\begin{equation}
\begin{split}
\text{\eqref{eq::key2}} & = 
\sum_{l_1=1}^{2K}\sum_{l_2=1}^{2K}\E\,\{  \big(\langle\alpha^{(l_1,n)},\alpha^{(l_2,n)}\rangle_{s\wedge \tau} - \langle\alpha^{(l_1,n)},\alpha^{(l_2,n)}\rangle_{(t_{*,l_1}\vee t_{*,l_2})\wedge \tau}\big) g_{s}^{(l_1,n)}g_{s}^{(l_2,n)}\}\\
& = \sum_{l_1=1}^{2K}\sum_{l_2=1}^{2K}\E\,\{\int_{(t_{*,l_1}\vee t_{*,l_2})\wedge \tau}^{s\wedge \tau} f_u^{(l_1,n)}f_u^{(l_2,n)} \,\dd\langle\alpha^{(n)},\alpha^{(n)} \rangle_{u}g_{u}^{(l_1,n)}g_{u}^{(l_2,n)}\\
& \leq a_{+}^2 \sum_{l_1=1}^{2K}\sum_{l_2=1}^{2K} (s - (t_{*,l_1}\vee t_{*,l_2})).
\end{split}
\label{eq::angleZZ.ineq}
\end{equation}
For $l=1,\ldots,2K$, define 
\begin{equation}
h_s^{(l,n)} = \sum_{K \leq i \leq B - K, i\equiv l[2K]}(s - t_{i-K})I\{t_{i-K}\leq s < t_{i+K}\},
\notag
\end{equation}
and notice that
\begin{equation}
(s - t_{*,l}) = (s - t_{*,l}(s)) = h_s^{(l,n)}.
\notag
\end{equation}
Substituting the bound in~\eqref{eq::angleZZ.ineq} and the three similar ones into $\E\,\langle Z_n,Z_n\rangle_{\tau}$, then  
\begin{equation}
\begin{split}
\E\,\langle Z_n,Z_n\rangle_{\tau} & \lesssim  \frac{1}{K^2} \sum_{l_1=1}^{2K}\sum_{l_2=1}^{2K}\int_{0}^{T} (s - (t_{*,l_1}\vee t_{*,l_2}))\,\dd s\\
& = \frac{1}{K^2} \sum_{l_1=1}^{2K}\bigg\{ \int_0^{T}(s - (t_{*,l_1}\vee t_{*,1}))\,\dd s + \cdots +\int_0^{T}(s - (t_{*,l_1}\vee t_{*,2K}))\,\dd s\bigg\}\\
& \leq \frac{1}{K^2}\sum_{l_1=1}^{2K}\bigg\{ \int_0^{T}(s - t_{*,l_1})\,\dd s  + \cdots+\int_0^{T}(s - t_{*,l_1})\,\dd s\bigg\}
= \frac{2}{K}\sum_{l=1}^{2K}\int_0^{T}(s - t_{*,l})\,\dd s\\
& = \frac{2}{K}\sum_{l=1}^{2K}\int_0^{T} h_s^{(l,n)}\,\dd s
= \frac{2}{K}\sum_{l=1}^{2K}\sum_{K \leq i \leq B - K,\equiv l[2K]}\int_{t_{i-K}}^{t_{i+K}} (s - t_{i-K})\,\dd s\\
& = \frac{2}{K}\sum_{i=K}^{B-K}\int_{t_{i-K}}^{t_{i+K}} (s - t_{i-K})\,\dd s
= \frac{4}{K}\sum_{i=K}^{B-K} (K\Delta_n)^2
= 4 {T} K\Delta_n,
\end{split}
\notag
\end{equation}
where the proportionality constant left out is $\max(a_{+}^4,b_{+}^4)$. Let 
\begin{equation}
\tau_n = \inf\{t\in[0,T] \colon \langle \alpha^{(n)},\alpha^{(n)}\rangle_t > a_{+}^2t\;\text{or}\;\langle \beta^{(n)},\beta^{(n)}\rangle_t > b_{+}^2t\}. 
\notag
\end{equation}
By Condition~\ref{cond:rate}, $\tau_n \to T$ as $n \to \infty$. Let $\eps > 0$ and choose $a_{+}$ and $b_{+}$ sufficiently large, so that $P(\tau_n \neq T) \leq \varepsilon/2$, and let $c = \max(a_{+}^4,b_{+}^4)$. Then 
\begin{equation}
\begin{split} 
P(\langle Z_n,Z_n\rangle_T/(4c {T} K\Delta_n) > M)&  
\leq P(\langle Z_n,Z_n\rangle_{\tau_n}/(4c {T} K\Delta_n) > M) + P(\tau_n \neq T)\\
& \leq M^{-1}\E\,[\langle Z_n,Z_n\rangle_{\tau_n}/(4c {T} K\Delta_n)] + P(\tau_n \neq T)\\
& = M^{-1} + \varepsilon/2 \leq \varepsilon, 
\end{split}
\notag
\end{equation}
provided $ M \geq 2/\varepsilon$. This shows that $\langle Z_n,Z_n\rangle_T/(4c {T} K\Delta_n)$ is tight, so 
\begin{equation}
\langle Z_n,Z_n\rangle_T = O_p(4c {T} K_n\Delta_n) = O_p(K_n\Delta_n). 
\notag
\end{equation}
By Lenglart{'}s inequality \citep[p.~86]{andersen1993statistical}, for any $\delta >0$ and $M>0$
\begin{equation}
P(\sup_{0 \leq t\leq T}|Z_n(t)| > \delta) \leq \frac{M}{\delta^2} + P(\langle Z_n,Z_n\rangle_T > M). 
\notag
\end{equation} 
With the same $\delta = M$ and the same $M$ as above, $P(\sup_{0 \leq t\leq T}|Z_n(t)| > \delta) \leq (3/2)\varepsilon$, from which we conclude that
\begin{equation}
\sup_{0\leq t \leq T}|Z_n(t)| = O_p\big((K\Delta_n)^{1/2}\big).
\notag
\end{equation}

\section{Proof of Theorem~\ref{theorem::clt}}\label{app::CLT_proof}
For this theorem we are assuming that the sequences $\alpha^{(n)}$ and $\beta^{(n)}$ are square-integrable local martingales, both equal to zero at time $t = 0$. For $l=1,\ldots,2K$, we define 
\begin{equation}
\begin{split}
&Z_{n,l}(t)  = \sum_{t_{i+K}\leq t ,\, i \equiv l[2K]}(\alpha_{t_{i+K}}^{(l,n)} - \alpha_{t_{i-K}}^{(l,n)})(\beta_{t_{i+K}}^{(l,n)} - \beta_{t_{i-K}}^{(l,n)})\\
& \qquad \qquad \qquad \qquad 
+ (\alpha_{t}^{(l,n)} - \alpha_{t_{*,l}}^{(l,n)})(\beta_{t}^{(l,n)} - \beta_{t_{*,l}}^{(l,n)}) - [\alpha^{(l,n)},\beta^{(l,n)}]_{t}\\
& \;= \sum_{t_{i+K}\leq t ,\, i \equiv l[2K]}\big\{\int_{t_{i-K}}^{t_{i+K}}(\alpha_s^{(l,n)} - \alpha_{t_{i-K}}^{(l,n)})\,\dd \beta_s^{(l,n)} 
+ \int_{t_{i-K}}^{t_{i+K}}(\beta_s^{(l,n)} - \beta_{t_{i-K}}^{(l,n)})\,\dd \alpha_s^{(l,n)} \big\}\\
& \qquad \qquad \qquad \qquad + \int_{t_{*,l}}^{t}(\alpha_s^{(l,n)} - \alpha_{t_{*,l}}^{(l,n)})\,\dd \beta_s^{(l,n)} 
+ \int_{t_{*,l}}^{t}(\beta_s^{(l,n)} - \beta_{t_{*,l}}^{(l,n)})\,\dd \alpha_s^{(l,n)},
\end{split}
\label{eq::errorMG1}
\end{equation}  
and set
\begin{equation}
Z_{n}(t) = \frac{1}{2K}\sum_{l=1}^{2K}Z_{n,l}(t),
\notag
\end{equation}  
so that $Z_n(T)$ is the martingale in~\eqref{eq::CLT_MG}. We will verify that the sequence $(K\Delta_n)^{-1/2}Z_{n}(t)$ satisfies Conditions (i)--(iii) of Theorem~\ref{theorem::th2.28_general}, and the claim will follow. 

Recall that by Condition~\ref{cond:rate} we assume there are processes, say $a_{n,t}$, $b_{n,t}$, and $c_{n,t}$, such that 
\begin{equation}
\dd\langle \alpha^{(n)},\alpha^{(n)} \rangle_t = a_{n,t}^2\,\dd t, \quad
\dd\langle \beta^{(n)},\beta^{(n)} \rangle_t = b_{n,t}^2\,\dd t,\quad\text{and}\quad 
\dd\langle \alpha^{(n)},\beta^{(n)} \rangle_t = c_{n,t}\,\dd t,
\notag
\end{equation}
and that these are locally bounded uniformly in $n$, that is, there is a sequence of stopping times $\tau_n$ such that for $0 \leq t \leq \tau_n$ we have finite constants $a_{+}, b_{+}$, and $c_{+}$ such that $a_{n,t} \leq a_{+}$, $b_{n,t} \leq b_{+}$, and $|c_{n,t}| \leq c_{+}$, and $P(\tau_n = T) \to 1$ as $n \to \infty$. Moreover, we assume that $a_{n,t}^2, b_{n,t}^2$, and $c_{n,t}$ are locally continuous in mean square, see Definition~\ref{assumption3}.     

The quadratic variation of $Z_{n}(t)$ is 
\begin{equation}
\begin{split}
\langle Z_n,Z_n\rangle_{T} & = \frac{1}{4K^2}\sum_{l_1=1}^{2K}\sum_{l_2=1}^{2K} \langle Z_{n,l_1},Z_{n,l_2}\rangle_{T}.\\
\end{split}
\notag
\end{equation}
Here
\begin{equation}
\begin{split}
\langle Z_{n,l_1},Z_{n,l_2}\rangle_{T} & = 
\int_0^{T}  (\alpha_s^{(l_1,n)} - \alpha_{t_{*,l_1}}^{(l_1,n)})(\alpha_s^{(l_2,n)} - \alpha_{t_{*,l_2}}^{(l_2,n)})\,\dd\langle\beta^{(l_1,n)},\beta^{(l_2,n)} \rangle_{s}[2]\\
& \qquad\quad + \int_0^{T}  (\alpha_s^{(l_1,n)} - \alpha_{t_{*,l_1}}^{(l_1,n)})(\beta_s^{(l_2,n)} - \beta_{t_{*,l_2}}^{(l_2,n)})\,\dd\langle\beta^{(l_1,n)},\alpha^{(l_2,n)} \rangle_{s}[2]\\
& = \sum_{j=1}^{4}\langle Z_{n,l_1},Z_{n,l_2}\rangle_{t}^{(j)},
\end{split}
\label{eq::cross_angles}
\end{equation}
by which we define $\langle Z_{n,l_1},Z_{n,l_2}\rangle_{t}^{(j)}$ for $j =1,2,3,4$. Start by concentrating on $\langle Z_{n,l_1},Z_{n,l_2}\rangle_{T}^{(1)}$, which is given by 
\begin{equation}
\begin{split}
\langle Z_{n,l_1},Z_{n,l_2}\rangle_{T}^{(1)} & = \int_0^{T}  (\alpha_s^{(l_1,n)} - \alpha_{t_{*,l_1}}^{(l_1,n)})(\alpha_s^{(l_2,n)} - \alpha_{t_{*,l_2}}^{(l_2,n)})\,\dd\langle\beta^{(l_1,n)},\beta^{(l_2,n)} \rangle_{s}\\
& = \int_0^{T}  (\alpha_s^{(l_1,n)} - \alpha_{t_{*,l_1}}^{(l_1,n)})g_s^{(l_1,n)}(\alpha_s^{(l_2,n)} - \alpha_{t_{*,l_2}}^{(l_2,n)})g_s^{(l_2,n)}\,\dd\langle\beta^{(n)},\beta^{(n)} \rangle_{s}.
\end{split}
\label{eq::first_term}
\end{equation}
Write
\begin{equation}
\{t_{i-K},t_{i+K} : i \equiv l[2K],\,K\leq i \leq B-K\} = \{t_{0,l},t_{1,l},t_{2,l} ,\ldots\},
\notag
\end{equation}
where the indices on the right hand side are such that $t_{i,l} < t_{i+1,l}$, and let $\G^{(l)}$ be the set of these time points, i.e.~$\G^{(l)} = \{t_{0,l} < t_{1,l} < t_{2,l} < \cdots\}$. With this notation we have, e.g. that
\begin{equation}
\begin{split}
\sum_{K\leq i \leq B-K,\,i\equiv l[2K]}(\alpha_{t_{i+K}}^{(l,n)} - \alpha_{t_{i-K}}^{(l,n)})(\beta_{t_{i+K}}^{(l,n)} - \beta_{t_{i-K}}^{(l,n)}) 
= \sum_{t_{i+1,l}\leq T}(\alpha_{t_{i+1,l}}^{(l,n)} - \alpha_{t_{i,l}}^{(l,n)})
(\beta_{t_{i+1,l}}^{(l,n)} - \beta_{t_{i,l}}^{(l,n)}).
\end{split}
\notag
\end{equation}
The time $t_{*,l}$ defined in~\eqref{eq::max_time} is now simply $t_{*,l}  = \min\{t_i \in \G^{(l)} : t_i \leq s\} = \min\{t_{i,l}  : t_{i,l} \leq s\}$. Attach the number $t_{-1,l} = 0 $ to $\G^{(l)}$ if it is not already there, and suppose, without loss of generality, that $t_{i,l_1} < t_{i,l_2}$ for all $i$, and that $t_{0,l_1} = 0$. We can then write 
\begin{equation}
\begin{split}
& \langle Z_{n,l_1},Z_{n,l_2}\rangle_{T}^{(1)} 
 = \int_0^{T}  (\alpha_s^{(l_1,n)} - \alpha_{t_{*,l_1}}^{(l_1,n)})(\alpha_s^{(l_2,n)} - \alpha_{t_{*,l_2}}^{(l_2,n)})\,\dd\langle\beta^{(l_1,n)},\beta^{(l_2,n)} \rangle_{s}\\
& \qquad = \sum_{i\,:\,t_{i+1,l_1}\leq T}\big\{\int_{t_{i,l_1}}^{t_{i,l_2}}
(\alpha_s^{(l_1,n)} - \alpha_{t_{i,l_1}}^{(l_1,n)})(\alpha_s^{(l_2,n)} - \alpha_{t_{i-1,l_2}}^{(l_2,n)})\,\dd\langle\beta^{(l_1,n)},\beta^{(l_2,n)} \rangle_{s}\\
& \qquad \qquad \qquad   + \int_{t_{i,l_2}}^{t_{i+1,l_1}}
(\alpha_s^{(l_1,n)} - \alpha_{t_{i,l_1}}^{(l_1,n)})(\alpha_s^{(l_2,n)} - \alpha_{t_{i,l_2}}^{(l_2,n)})\,\dd\langle\beta^{(l_1,n)},\beta^{(l_2,n)} \rangle_{s}\big\}\\
& \qquad \qquad \qquad \quad + \int_{t_{*,l_1}(T)}^T (\alpha_s^{(l_1,n)} - \alpha_{t_{*,l_1}(s)}^{(l_1,n)})(\alpha_s^{(l_2,n)} - \alpha_{t_{*,l_2}(s)}^{(l_2,n)})\,\dd\langle\beta^{(l_1,n)},\beta^{(l_2,n)} \rangle_{s}.
\end{split}
\label{eq::key_decomp}
\end{equation}
We now want to show that~\eqref{eq::key_decomp} is  
\begin{equation}
\begin{split}
& \langle Z_{n,l_1},Z_{n,l_2}\rangle_{T}^{(1)} 
= \sum_{i\,:\,t_{i+1,l_1}\leq T}\big\{\int_{t_{i,l_1}}^{t_{i,l_2}}
\int_{t_{i,l_1}}^s \,\dd \langle \alpha^{(l_1,n)},\alpha^{(l_2,n)} \rangle_s
\,\dd\langle\beta^{(l_1,n)},\beta^{(l_2,n)} \rangle_{s}\\
& \qquad\qquad   + \int_{t_{i,l_2}}^{t_{i+1,l_1}}
\int_{t_{i,l_2}}^s \,\dd \langle \alpha^{(l_1,n)},\alpha^{(l_2,n)} \rangle_s
\,\dd\langle\beta^{(l_1,n)},\beta^{(l_2,n)} \rangle_{s}\big\}\\
& \qquad\qquad 
+ \int_{t_{*,l_1}(T)}^T \int_{(t_{*,l_1}\vee t_{*,l_2})(T) }^s \,\dd \langle \alpha^{(l_1,n)},\alpha^{(l_2,n)} \rangle_s \,\dd\langle\beta^{(l_1,n)},\beta^{(l_2,n)} \rangle_{s} + o_p(K\Delta_n).
\end{split}
\label{eq::part1}
\end{equation}
The key is to show equalities of the type
\begin{equation}
\begin{split}
&\int_{t_{i,l_1}}^{t_{i,l_2}}
(\alpha_s^{(l_1,n)} - \alpha_{t_{i,l_1}}^{(l_1,n)})(\alpha_s^{(l_2,n)} - \alpha_{t_{i-1,l_2}}^{(l_2,n)})\,\dd\langle\beta^{(l_1,n)},\beta^{(l_2,n)} \rangle_{s}\\
& \qquad 
= \int_{t_{i,l_1}}^{t_{i,l_2}}(\langle \alpha^{(l_1,n)},\alpha^{(l_2,n)}\rangle_{s}
- \langle \alpha^{(l_1,n)},\alpha^{(l_2,n)}\rangle_{t_{i,l_1}})
\,\dd\langle\beta^{(l_1,n)},\beta^{(l_2,n)} \rangle_{s}
+ \text{negligible terms},
\end{split}
\notag
\end{equation}
and that the {`}negligible terms{'} are of the appropriate order. Recall that $t_{i-1,l_2} < t_{i,l_1}$, thus by It{\^o}{'}s formula 
\begin{equation}
\begin{split}
&(\alpha_s^{(l_1,n)} - \alpha_{t_{i,l_1}}^{(l_1,n)})(\alpha_s^{(l_2,n)} - \alpha_{t_{i-1,l_2}}^{(l_2,n)}) - (\langle \alpha^{(l_1,n)},\alpha^{(l_2,n)}\rangle_{s}
- \langle \alpha^{(l_1,n)},\alpha^{(l_2,n)}\rangle_{t_{i,l_1}})\\
& \qquad \qquad \qquad \qquad= \int_{t_{i,l_1}}^{s}(\alpha_u^{(l_1,n)} - \alpha_{t_{i,l_1}}^{(l_1,n)})\,\dd \alpha_u^{(l_2,n)}
+ \int_{t_{i,l_1}}^{s}(\alpha_u^{(l_2,n)} - \alpha_{t_{i-1,l_2}}^{(l_2,n)})\,\dd \alpha_u^{(l_1,n)}.
\end{split}
\notag
\end{equation}
This means that 
\begin{equation}
\begin{split}
&\int_{t_{i,l_1}}^{t_{i,l_2}}
(\alpha_s^{(l_1,n)} - \alpha_{t_{i,l_1}}^{(l_1,n)})(\alpha_s^{(l_2,n)} - \alpha_{t_{i-1,l_2}}^{(l_2,n)})\,\dd\langle\beta^{(l_1,n)},\beta^{(l_2,n)} \rangle_{s}\\
&  = \int_{t_{i,l_1}}^{t_{i,l_2}}
(\langle \alpha^{(l_1,n)},\alpha^{(l_2,n)}\rangle_{s}
- \langle \alpha^{(l_1,n)},\alpha^{(l_2,n)}\rangle_{t_{i,l_1}})\,\dd\langle\beta^{(l_1,n)},\beta^{(l_2,n)} \rangle_{s}\\
&    
+ \int_{t_{i,l_1}}^{t_{i,l_2}}\big\{ \int_{t_{i,l_1}}^{s}(\alpha_u^{(l_1,n)} - \alpha_{t_{i,l_1}}^{(l_1,n)})\,\dd \alpha_u^{(l_2,n)} 
+ \int_{t_{i,l_1}}^{s}(\alpha_u^{(l_2,n)} - \alpha_{t_{i-1,l_2}}^{(l_2,n)})\,\dd \alpha_u^{(l_1,n)} \big\}\,\dd\langle \beta^{(l_1,n)},\beta^{(l_2,n)}\rangle_s.\\
\end{split}
\notag
\end{equation}
We now turn to the two last terms on the right hand side of this expression, and look at
\begin{equation} 
\begin{split}
&\int_{t_{i,l_1}}^{t_{i,l_2}}\int_{t_{i,l_1}}^{s}(\alpha_u^{(l_1,n)} - \alpha_{t_{i,l_1}}^{(l_1,n)})\,\dd \alpha_u^{(l_2,n)} \,\dd\langle \beta^{(l_1,n)},\beta^{(l_2,n)}\rangle_s\\
& \qquad \qquad  = \int_{t_{i,l_1}}^{t_{i,l_2}}\int_{t_{i,l_1}}^{s}(\alpha_u^{(l_1,n)} - \alpha_{t_{i,l_1}}^{(l_1,n)})\,\dd \alpha_u^{(l_2,n)} g_s^{(l_1,n)}g_s^{(l_2,n)}b_{n,s}^2\,\dd s\\
& \qquad \qquad  = \int_{t_{i,l_1}}^{t_{i,l_2}}\int_{t_{i,l_1}}^{s}(\alpha_u^{(l_1,n)} - \alpha_{t_{i,l_1}}^{(l_1,n)})\,\dd \alpha_u^{(l_2,n)} g_s^{(l_1,n)}g_s^{(l_2,n)}b_{n,t_{i,l_1}}^2\,\dd s\\
& \qquad \qquad \quad + \int_{t_{i,l_1}}^{t_{i,l_2}}\int_{t_{i,l_1}}^{s}(\alpha_u^{(l_1,n)} - \alpha_{t_{i,l_1}}^{(l_1,n)})\,\dd \alpha_u^{(l_2,n)} g_s^{(l_1,n)}g_s^{(l_2,n)}(b_{n,s}^2 - b_{n,t_{i,l_1}}^2)\,\dd s.
\end{split}
\label{eq::two_terms}
\end{equation}
Consider the two terms on the right in~\eqref{eq::two_terms} separately, starting with the first term. Define the functions  
\begin{equation}
G_t^{(l_1,l_2)} = \int_0^t g_{s}^{(l_1,n)}g_{s}^{(l_2,n)}\,\dd s,\qquad l_1,l_2 = 1,\ldots 2K,
\notag
\end{equation}
and note that since $|g_{s}^{(l_1,n)}| \leq 1$ for all $s$, the functions $G_t^{(l_1,l_2)}$ are Lipschitz with constant $1$, that is,
\begin{equation}
|G_t^{(l_1,l_2)} - G_s^{(l_1,l_2)}| \leq |t - s|, \quad\text{for all $t,s$}.
\notag
\end{equation} 
An application of It{\^o}{'}s formula yields
\begin{equation} 
\begin{split}
& \dd\{\int_{t_{i,l_1}}^{s}(\alpha_u^{(l_1,n)} - \alpha_{t_{i,l_1}}^{(l_1,n)})\,\dd \alpha_u^{(l_2,n)} (G_{t_{i,l_2}}^{(l_1,l_2)} - G_{s}^{(l_1,l_2)})\}\\
& \qquad \qquad = - \int_{t_{i,l_1}}^{s}(\alpha_u^{(l_1,n)} - \alpha_{t_{i,l_1}}^{(l_1,n)})\,\dd \alpha_u^{(l_2,n)}g_{s}^{(l_1,n)}g_{s}^{(l_2,n)}\,\dd s \\
& \qquad \qquad\qquad \qquad+ (G_{t_{i,l_2}}^{(l_1,l_2)} - G_{s}^{(l_1,l_2)})(\alpha_s^{(l_1,n)} - \alpha_{t_{i,l_1}}^{(l_1,n)})\,\dd \alpha_s^{(l_2,n)}.
\end{split}
\notag
\end{equation}
Integrating from $t_{i,l_1}$ to $t_{i,l_2}$, 
\begin{equation}
\begin{split}
&\int_{t_{i,l_1}}^{t_{i,l_2}}\int_{t_{i,l_1}}^{s}(\alpha_u^{(l_1,n)} - \alpha_{t_{i,l_1}}^{(l_1,n)})\,\dd \alpha_u^{(l_2,n)}g_{s}^{(l_1,n)}g_{s}^{(l_2,n)}\,\dd s \\
& \qquad \qquad =  \int_{t_{i,l_1}}^{t_{i,l_2}}(G_{t_{i,l_2}}^{(l_1,l_2)} - G_{s}^{(l_1,l_2)})(\alpha_s^{(l_1,n)} - \alpha_{t_{i,l_1}}^{(l_1,n)})\,\dd \alpha_s^{(l_2,n)}.
\end{split}
\notag
\end{equation}
Since $G_{s}^{(l_1,l_2)}$ is deterministic it is predictable, so the right hand side of this expression is a martingale. Then, for $ t_{i,l_2} < \tau_n$, 
\begin{equation}
\begin{split}
&\norm{\int_{t_{i,l_1}}^{t_{i,l_2}}\int_{t_{i,l_1}}^{s}(\alpha_u^{(l_1,n)} - \alpha_{t_{i,l_1}}^{(l_1,n)})\,\dd \alpha_u^{(l_2,n)} g_{s}^{(l_1,n)}g_{s}^{(l_2,n)}b_{n,t_{i,l_1}}^2\,\dd s}_2^2  \\
& \qquad \qquad \leq b_{+}^2\norm{\int_{t_{i,l_1}}^{t_{i,l_2}}(G_{t_{i,l_2}}^{(l_1,l_2)} - G_{s}^{(l_1,l_2)})(\alpha_s^{(l_1,n)} - \alpha_{t_{i,l_1}}^{(l_1,n)})\,\dd \alpha_s^{(l_2,n)}}_2^2\\
& \qquad \qquad = b_{+}^2\,\E\,\int_{t_{i,l_1}}^{t_{i,l_2}}(G_{t_{i,l_2}}^{(l_1,l_2)} - G_{s}^{(l_1,l_2)})^2(\alpha_s^{(l_1,n)} - \alpha_{t_{i,l_1}}^{(l_1,n)})^2\,\dd \langle \alpha^{(l_2,n)},\alpha^{(l_2,n)}\rangle_s\\
& \qquad \qquad = b_{+}^2\,\E\,\int_{t_{i,l_1}}^{t_{i,l_2}}(G_{t_{i,l_2}}^{(l_1,l_2)} - G_{s}^{(l_1,l_2)})^2(\alpha_s^{(l_1,n)} - \alpha_{t_{i,l_1}}^{(l_1,n)})^2(f_{s}^{(l,n)})^2a_{n,s}^2\,\dd s\\
& \qquad \qquad \leq a_{+}^2b_{+}^2\, \E\,\int_{t_{i,l_1}}^{t_{i,l_2}}(G_{t_{i,l_2}}^{(l_1,l_2)} - G_{s}^{(l_1,l_2)})^2(\alpha_s^{(l_1,n)} - \alpha_{t_{i,l_1}}^{(l_1,n)})^2\,\dd s\\
& \qquad \qquad \leq a_{+}^2b_{+}^2\, \E\,\int_{t_{i,l_1}}^{t_{i,l_2}}(t_{i,l_2} - s)^2\E\,(\alpha_s^{(l_1,n)} - \alpha_{t_{i,l_1}}^{(l_1,n)})^2\,\dd s\\
& \qquad \qquad \leq a_{+}^2b_{+}^2\, \int_{t_{i,l_1}}^{t_{i,l_2}}(t_{i,l_2} - s)^2\E\,(\langle\alpha^{(l_1,n)},\alpha^{(l_1,n)}\rangle_s -
\langle\alpha^{(l_1,n)},\alpha^{(l_1,n)}\rangle_{t_{i,l_1}} )\,\dd s\\
& \qquad \qquad \leq a_{+}^4b_{+}^2\, \int_{t_{i,l_1}}^{t_{i,l_2}}(t_{i,l_2} - s)^2(s - t_{i,l_1})\,\dd s
= \frac{a_{+}^4 b_{+}^2}{12}(t_{i,l_2} - t_{i,l_1})^4.
\end{split}
\notag
\end{equation}
Since the martingale increments are uncorrelated, this gives
\begin{equation}
\begin{split}
& \E\, \big(\sum_{i\,:\,t_{i+1,l_1}\leq \tau_n} \int_{t_{i,l_1}}^{t_{i,l_2}}\int_{t_{i,l_1}}^{s}(\alpha_u^{(l_1,n)} - \alpha_{t_{i,l_1}}^{(l_1,n)})\,\dd \alpha_u^{(l_2,n)} g_{s}^{(l_1,n)}g_{s}^{(l_2,n)}b_{n,s}^2\,\dd s		\big)^2\\
& \qquad \qquad \leq 
b_{+}^2\,\E\, \big( \sum_{i\,:\,t_{i+1,l_1}\leq \tau_n} \int_{t_{i,l_1}}^{t_{i,l_2}}(G_{t_{i,l_2}}^{(l_1,l_2)} - G_{s}^{(l_1,l_2)})(\alpha_s^{(l_1,n)} - \alpha_{t_{i,l_1}}^{(l_1,n)})\,\dd \alpha_s^{(l_2,n)}		\big)^2\\
& \qquad \qquad =
b_{+}^2\sum_{i\,:\,t_{i+1,l_1}\leq \tau_n}\E\, \big(  \int_{t_{i,l_1}}^{t_{i,l_2}}(G_{t_{i,l_2}}^{(l_1,l_2)} - G_{s}^{(l_1,l_2)})(\alpha_s^{(l_1,n)} - \alpha_{t_{i,l_1}}^{(l_1,n)})\,\dd \alpha_s^{(l_2,n)}		\big)^2\\
& \qquad \qquad \leq 
\frac{a_{+}^4b_{+}^2}{12} \sum_{i\,:\,t_{i+1}\leq \tau_n}(t_{i,l_2} - t_{i,l_1})^4.
\end{split}
\notag
\end{equation}
By Chebyshev{'}s inequality we have that for any $\varepsilon > 0$, 
\begin{equation}
\begin{split}
& P\big\{ \big|\sum_{i\,:\,t_{i+1,l_1}\leq T} \int_{t_{i,l_1}}^{t_{i,l_2}}\int_{t_{i,l_1}}^{s}(\alpha_u^{(l_1,n)} - \alpha_{t_{i,l_1}}^{(l_1,n)})\,\dd \alpha_u^{(l_2,n)} g_{s}^{(l_1,n)}g_{s}^{(l_2,n)}b_{n,s}^2\,\dd s		\big| \geq \varepsilon \big\}\\
& \qquad \qquad \qquad \qquad \leq  \frac{1}{\varepsilon^2}\frac{a_{+}^4b_{+}^2}{12} \sum_{i\,:\,t_{i+1}\leq \tau_n}(t_{i,l_2} - t_{i,l_1})^4 + P(\tau_n \neq T), 
\end{split}
\notag
\end{equation}
which shows that $\sum_{i\,:\,t_{i+1,l_1}\leq T} \int_{t_{i,l_1}}^{t_{i,l_2}}\int_{t_{i,l_1}}^{s}(\alpha_u^{(l_1,n)} - \alpha_{t_{i,l_1}}^{(l_1,n)})\,\dd \alpha_u^{(l_2,n)} g_{s}^{(l_1,n)}g_{s}^{(l_2,n)}b_{n,s}^2\,\dd s$ is $o_p(K\Delta_n)$ as $K\Delta_n \to 0$. We now turn to the second term in~\eqref{eq::two_terms}. For $t_{i,l_1} \leq s < \tau_n$,
\begin{equation}
\begin{split}
& \norm{\int_{t_{i,l_1}}^{s}(\alpha_u^{(l_1,n)} - \alpha_{t_{i,l_1}}^{(l_1,n)})\,\dd \alpha_u^{(l_2,n)} g_s^{(l_1,n)}g_s^{(l_2,n)}(b_{n,s}^2 - b_{n,t_{i,l_1}}^2)}_1\\
& \qquad \qquad \leq \norm{\int_{t_{i,l_1}}^{s}(\alpha_u^{(l_1,n)} - \alpha_{t_{i,l_1}}^{(l_1,n)})\,\dd \alpha_u^{(l_2,n)}}_2 \norm{g_s^{(l_1,n)}g_s^{(l_2,n)}(b_{n,s}^2 - b_{n,t_{i,l_1}}^2)}_2\\
& \qquad \qquad = \norm{\int_{t_{i,l_1}}^{s}(\alpha_u^{(l_1,n)} - \alpha_{t_{i,l_1}}^{(l_1,n)})\,\dd \alpha_u^{(l_2,n)}}_2 |g_s^{(l_1,n)}g_s^{(l_2,n)}|\norm{b_{n,s}^2 - b_{n,t_{i,l_1}}^2}_2\\
& \qquad \qquad \leq \norm{\int_{t_{i,l_1}}^{s}(\alpha_u^{(l_1,n)} - \alpha_{t_{i,l_1}}^{(l_1,n)})\,\dd \alpha_u^{(l_2,n)}}_2 \norm{b_{n,s}^2 - b_{n,t_{i,l_1}}^2}_2\\
& \qquad \qquad = \big(\E\,\int_{t_{i,l_1}}^{s}(\alpha_u^{(l_1,n)} - \alpha_{t_{i,l_1}}^{(l_1,n)})^2 (f_u^{(l_2,n)})^2a_{n,u}^2\,\dd u\big)^{1/2} \norm{b_{n,s}^2 - b_{n,t_{i,l_1}}^2}_2\\
& \qquad \qquad \leq a_{+} \big(\E\,\int_{t_{i,l_1}}^{s}( \langle \alpha^{(l_1,n)},\alpha^{(l_1,n)}\rangle_u - \langle \alpha^{(l_1,n)},\alpha^{(l_1,n)}\rangle_{t_{i,l_1}}) \,\dd u\big)^{1/2} \norm{b_{n,s}^2 - b_{n,t_{i,l_1}}^2}_2\\
& \qquad \qquad \leq a_{+}^2 \big(\int_{t_{i,l_1}}^{s}(u - t_{i,l_1}) \,\dd u\big)^{1/2} \norm{b_{n,s}^2 - b_{n,t_{i,l_1}}^2}_2\\
& \qquad \qquad = \frac{a_{+}^2}{\sqrt{2}} (s - t_{i,l_1}) \norm{b_{n,s}^2 - b_{n,t_{i,l_1}}^2}_2 
\leq \frac{a_{+}^2}{\sqrt{2}} (s - t_{i,l_1}) \sup_{t_{i,l_1} \leq s \leq t_{i,l_2}}\norm{b_{n,s}^2 - b_{n,t_{i,l_1}}^2}_2.
\end{split}
\notag
\end{equation}
From this we get that for $t_{i,l_2} < \tau_n$,
\begin{equation}
\begin{split}
& \norm{\int_{t_{i,l_1}}^{t_{i,l_2}}\int_{t_{i,l_1}}^{s}(\alpha_u^{(l_1,n)} - \alpha_{t_{i,l_1}}^{(l_1,n)})\,\dd \alpha_u^{(l_2,n)} g_s^{(l_1,n)}g_s^{(l_2,n)}(b_{n,s}^2 - b_{n,t_{i,l_1}}^2)\,\dd s}_1\\
& \qquad \qquad \leq \int_{t_{i,l_1}}^{t_{i,l_2}}
\norm{\int_{t_{i,l_1}}^{s}(\alpha_u^{(l_1,n)} - \alpha_{t_{i,l_1}}^{(l_1,n)})\,\dd \alpha_u^{(l_2,n)} g_s^{(l_1,n)}g_s^{(l_2,n)}(b_{n,s}^2 - b_{n,t_{i,l_1}}^2)\,\dd s}_1\\
& \qquad \qquad \leq \frac{a_{+}^2}{\sqrt{2}} \int_{t_{i,l_1}}^{t_{i,l_2}} (s - t_{i,l_1}) \,\dd s\,\sup_{t_{i,l_1} \leq s \leq t_{i,l_2}}\norm{b_{n,s}^2 - b_{n,t_{i,l_1}}^2}_2\\
& \qquad \qquad = \frac{a_{+}^2}{2^{3/2}} (t_{i,l_2} - t_{i,l_1})^2 \,\sup_{t_{i,l_1} \leq s \leq t_{i,l_2}}\norm{b_{n,s}^2 - b_{n,t_{i,l_1}}^2}_2,
\end{split}
\notag
\end{equation}
from which
\begin{equation}
\begin{split}
& \norm{\sum_{i\,:\, t_{i+1,l_1}\leq \tau_n}\int_{t_{i,l_1}}^{t_{i,l_2}}\int_{t_{i,l_1}}^{s}(\alpha_u^{(l_1,n)} - \alpha_{t_{i,l_1}}^{(l_1,n)})\,\dd \alpha_u^{(l_2,n)} g_s^{(l_1,n)}g_s^{(l_2,n)}(b_{n,s}^2 - b_{n,t_{i,l_1}}^2)\,\dd s}_1\\
& \qquad \qquad \leq \sum_{i\,:\, t_{i+1,l_1}\leq \tau_n}\int_{t_{i,l_1}}^{t_{i,l_2}}
\norm{\int_{t_{i,l_1}}^{s}(\alpha_u^{(l_1,n)} - \alpha_{t_{i,l_1}}^{(l_1,n)})\,\dd \alpha_u^{(l_2,n)} g_s^{(l_1,n)}g_s^{(l_2,n)}(b_{n,s}^2 - b_{n,t_{i,l_1}}^2)\,\dd s}_1\\
& \qquad \qquad \leq \frac{a_{+}^2}{2^{3/2}} \sum_{i\,:\, t_{i+1,l_1}\leq \tau_n}(t_{i,l_2} - t_{i,l_1})^2 \,\sup_{t_{i,l_1} \leq s \leq t_{i,l_2}}\norm{b_{n,s}^2 - b_{n,t_{i,l_1}}^2}_2\\
& \qquad \qquad \leq \frac{a_{+}^2}{2^{3/2}} \big(\sum_{i\,:\, t_{i+1,l_1}\leq \tau_n}(t_{i,l_2} - t_{i,l_1})^2 \big)\,\sup_{0 \leq |t - s| \leq K_n\Delta_n}\norm{b_{n,t}^2 - b_{n,s}^2}_2.
\end{split}
\notag
\end{equation}
By Markov{'}s inequality 
\begin{equation}
\begin{split}
& P\big\{\big| \sum_{i\,:\, t_{i+1,l_1}\leq T}\int_{t_{i,l_1}}^{t_{i,l_2}}\int_{t_{i,l_1}}^{s}(\alpha_u^{(l_1,n)} - \alpha_{t_{i,l_1}}^{(l_1,n)})\,\dd \alpha_u^{(l_2,n)} g_s^{(l_1,n)}g_s^{(l_2,n)}(b_{n,s}^2 - b_{n,t_{i,l_1}}^2)\,\dd s \big| \geq \varepsilon\big\}  \\
& \qquad \qquad \quad \leq \frac{1}{\varepsilon}\frac{a_{+}^2}{2^{3/2}} \big(\sum_{i\,:\, t_{i+1,l_1}\leq \tau_n}(t_{i,l_2} - t_{i,l_1})^2 \big)\,\sup_{0 \leq |t - s| \leq K_n\Delta_n}\norm{b_{n,t}^2 - b_{n,s}^2}_2 
+ P(\tau_n \neq T).
\end{split}
\notag
\end{equation}
By Assumption~\ref{assumption3}, i.e.~mean square continuity of $b_{n,s}^2$, we get
\begin{equation}
\sum_{i\,:\, t_{i+1,l_1}\leq T}\int_{t_{i,l_1}}^{t_{i,l_2}}\int_{t_{i,l_1}}^{s}(\alpha_u^{(l_1,n)} - \alpha_{t_{i,l_1}}^{(l_1,n)})\,\dd \alpha_u^{(l_2,n)} g_s^{(l_1,n)}g_s^{(l_2,n)}(b_{n,s}^2 - b_{n,t_{i,l_1}}^2)\,\dd s = o_p(K\Delta_n).
\notag
\end{equation}
This completes the proof of~\eqref{eq::part1}, and obviously, the same holds for $\langle Z_{n,l_1},Z_{n,l_2}\rangle_{T}^{(2)}$. 

We must now show that similar results apply to $\langle Z_{n,l_1},Z_{n,l_2}\rangle_{T}^{(3)}$ and $\langle Z_{n,l_1},Z_{n,l_2}\rangle_{T}^{(4)}$. It suffices to look at $\langle Z_{n,l_1},Z_{n,l_2}\rangle_{T}^{(3)}$. In analogy with~\eqref{eq::key_decomp}, we can write 
\begin{equation}
\begin{split}
\langle Z_{n,l_1},Z_{n,l_2}\rangle_{T}^{(3)} 
& = \int_0^{T}  (\alpha_s^{(l_1,n)} - \alpha_{t_{*,l_1}}^{(l_1,n)})(\beta_s^{(l_2,n)} - \beta_{t_{*,l_2}}^{(l_2,n)})\,\dd\langle\beta^{(l_1,n)},\alpha^{(l_2,n)} \rangle_{s}\\
& = \sum_{i \,:\, t_{i+1,l_1}\leq T}\big\{ \int_{t_{i,l_1}}^{t_{i,l_2}}
(\alpha_s^{(l_1,n)} - \alpha_{t_{i,l_1}}^{(l_1,n)})(\beta_s^{(l_2,n)} - \beta_{t_{i-1,l_2}}^{(l_2,n)})\,\dd\langle\beta^{(l_1,n)},\alpha^{(l_2,n)} \rangle_{s}\\
& \qquad \qquad \qquad  + \int_{\lambda_{i}}^{t_{i+1,l_1}}
(\alpha_s^{(l_1,n)} - \alpha_{t_{i,l_1}}^{(l_1,n)})(\beta_s^{(l_2,n)} - \beta_{t_{i,l_2}}^{(l_2,n)})\,\dd\langle\beta^{(l_1,n)},\alpha^{(l_2,n)} \rangle_{s}\big\}\\
& \qquad \qquad 
+ \int_{t_{*,l_1}(T)}^T (\alpha_s^{(l_1,n)} - \alpha_{t_{*,l_1}(s)}^{(l_1,n)})(\beta^{(l_2,n)} - \beta_{t_{*,l_2}(s)}^{(l_2,n)})\,\dd\langle\beta^{(l_1,n)},\alpha^{(l_2,n)} \rangle_{s},
\end{split}
\notag
\end{equation}
and we will have to deal with error terms of the form
\begin{equation}
\begin{split}
& \int_{t_{i,l_1}}^{t_{i,l_2}}\int_{t_{i,l_1}}^{s}(\alpha_u^{(l_1,n)} - \alpha_{t_{i,l_1}}^{(l_1,n)})\,\dd \beta_{u}^{(l_2,n)}\,\dd\langle \beta^{(l_1,n)},\alpha^{(l_2,n)}\rangle_s\\
& \qquad \qquad = \int_{t_{i,l_1}}^{t_{i,l_2}}\int_{t_{i,l_1}}^{s}(\alpha_u^{(l_1,n)} - \alpha_{t_{i,l_1}}^{(l_1,n)})\,\dd \beta_{u}^{(l_2,n)} g_s^{(l_1,n)}f_s^{(l_2,n)} c_{n,s}\,\dd s\\
& \qquad \qquad = \int_{t_{i,l_1}}^{t_{i,l_2}}\int_{t_{i,l_1}}^{s}(\alpha_u^{(l_1,n)} - \alpha_{t_{i,l_1}}^{(l_1,n)}) \,\dd \beta_{u}^{(l_2,n)} g_s^{(l_1,n)}f_s^{(l_2,n)} c_{n,t_{i,l_1}}\,\dd s\\
& \qquad \qquad \qquad \qquad 
\int_{t_{i,l_1}}^{t_{i,l_2}}\int_{t_{i,l_1}}^{s}(\alpha_u^{(l_1,n)} - \alpha_{t_{i,l_1}}^{(l_1,n)})\,\dd \beta_{u}^{(l_2,n)} g_s^{(l_1,n)}f_s^{(l_2,n)} (c_{n,s} - c_{n,t_{i,l_1}})\,\dd s.
\end{split}
\notag
\end{equation}
Define the functions $H_t^{(l_1,l_2)} = \int_0^tg_s^{(l_1,n)}f_s^{(l_2,n)}\,\dd s$, and note that $|H_t^{(l_1,l_2)} - H_s^{(l_1,l_2)}| \leq |t - s|$ for all $t,s$. Then for $t_{i,l_2} < \tau_n$, 
\begin{equation}
\begin{split}
& \int_{t_{i,l_1}}^{t_{i,l_2}}\int_{t_{i,l_1}}^{s}(\alpha_u^{(l_1,n)} - \alpha_{t_{i,l_1}}^{(l_1,n)})\,\dd \beta_{u}^{(l_2,n)} g_s^{(l_1,n)}f_s^{(l_2,n)} c_{n,t_{i,l_1}}\,\dd s\\
& \qquad \qquad \qquad \qquad = \int_{t_{i,l_1}}^{t_{i,l_2}}(H_{t_{i,l_2}}^{(l_1,l_2)} - H_{s}^{(l_1,l_2)})
(\alpha_s^{(l_1,n)} - \alpha_{t_{i,l_1}}^{(l_1,n)})\,\dd\beta_s^{(l_2,n)}.
\end{split}
\notag
\end{equation}
Analogous to above, this gives that 
\begin{equation}
\begin{split}
& \E\,\big(\sum_{i\,:\, t_{i+1,l_1}\leq \tau_n}\int_{t_{i,l_1}}^{t_{i,l_2}}\int_{t_{i,l_1}}^{s}(\alpha_u^{(l_1,n)} - \alpha_{t_{i,l_1}}^{(l_1,n)}) \dd \beta_{u}^{(l_2,n)} g_s^{(l_1,n)}f_s^{(l_2,n)} c_{n,t_{i,l_1}}\,\dd s\big)^2\\
& \qquad \qquad \qquad \qquad \qquad \qquad \qquad \qquad \leq \frac{a_{+}^2b_{+}^2|c_{+}|}{12} \sum_{i\,:\,t_{i+1,l_1}\leq \tau_n}(t_{i,l_2} - t_{i,l_1})^4.
\end{split}
\notag
\end{equation}
Looking back at the derivations above, we also see that 
\begin{equation}
\begin{split}
& \norm{\sum_{i\,:\, t_{i+1,l_1}\leq \tau_n}\int_{t_{i,l_1}}^{t_{i,l_2}}\int_{t_{i,l_1}}^{s}(\alpha_u^{(l_1,n)} - \alpha_{t_{i,l_1}}^{(l_1,n)})\,\dd \beta_{u}^{(l_2,n)} g_s^{(l_1,n)}f_s^{(l_2,n)} (c_{n,s} - c_{n,t_{i,l_1}})\,\dd s}_1\\
& \qquad \qquad \qquad \qquad \leq \frac{a_{+}b_{+}}{2^{3/2}} \big(\sum_{i\,:\, t_{i+1,l_1}\leq \tau_n}(t_{i,l_2} - t_{i,l_1})^2 \big)\,\sup_{0 \leq |t - s| \leq K_n\Delta_n}\norm{c_{n,t} - c_{n,s}}_2.
\end{split}
\notag
\end{equation}
By the same localisation techniques used previously, this establishes that 
(cf.~\eqref{eq::cross_angles}),
\begin{equation}
\begin{split}
& \langle Z_{n,l_1},Z_{n,l_2}\rangle_{T}^{(3)} + \langle Z_{n,l_1},Z_{n,l_2}\rangle_{T}^{(4)}\\ 
& \; = 
\int_0^{T}  (\langle\alpha^{(l_1,n)},\beta_s^{(l_2,n)} \rangle_s - 
\langle\alpha^{(l_1,n)},\beta_s^{(l_2,n)} \rangle_{t_{*,l_1}\vee t_{*,l_1}})
g_s^{(l_1,n)}f_s^{(l_2,n)}\,\dd\langle\beta^{(n)},\alpha^{(n)} \rangle_{s}\\
& \; + \int_0^{T}  (\langle\beta^{(l_1,n)},\alpha_s^{(l_2,n)} \rangle_s - 
\langle\beta^{(l_1,n)},\alpha_s^{(l_2,n)} \rangle_{t_{*,l_1}\vee t_{*,l_1}})
f_s^{(l_1,n)}g_s^{(l_2,n)}\,\dd\langle\alpha^{(n)},\beta^{(n)} \rangle_{s} + o_p(K_n\Delta_n).
\end{split}
\notag
\end{equation} 
Hence,
\begin{equation}
\begin{split}
&\langle Z_n,Z_n\rangle_{T}  = \frac{1}{4K^2}\sum_{l_1=1}^{2K}\sum_{l_2=1}^{2K} \langle Z_{n,l_1},Z_{n,l_2}\rangle_{T}\\
&  = \frac{1}{4K^2}\sum_{l_1=1}^{2K}\sum_{l_2=1}^{2K}\bigg\{\int_0^{T}
\int_{ t_{*,l_1}\vee t_{*,l_2} }^{s}\,\dd \langle \alpha^{(n)},\alpha^{(n)} \rangle_{u}\,f_u^{(l_1,n)}f_u^{(l_2,n)} g_s^{(l_1,n)}g_s^{(l_2,n)}\,\dd\langle\beta^{(n)},\beta^{(n)} \rangle_{s}\\
& \qquad + \int_0^{T} \int_{ t_{*,l_1}\vee t_{*,l_2} }^{s}\,\dd \langle \beta^{(n)},\beta^{(n)} \rangle_{u}\,g_u^{(l_1,n)}g_u^{(l_2,n)}f_s^{(l_1,n)}f_s^{(l_2,n)}\,\dd\langle\alpha^{(n)},\alpha^{(n)} \rangle_{s}\\
& \qquad + \int_0^{T}
\int_{ t_{*,l_1}\vee t_{*,l_2} }^{s}\,\dd \langle \alpha^{(n)},\beta^{(n)} \rangle_{u}\, g_u^{(l_2,n)}f_u^{(l_1,n)} g_{s}^{(l_1,n)}f_{s}^{(l_2,n)}\,\dd\langle\beta^{(n)},\alpha^{(n)} \rangle_{s}\\
& \qquad + \int_0^{T}
\int_{ t_{*,l_1}\vee t_{*,l_2} }^{s}\,\dd \langle \beta^{(n)},\alpha^{(n)} \rangle_{u}\, f_u^{(l_2,n)}g_u^{(l_1,n)}f_{s}^{(l_1,n)}g_{s}^{(l_2,n)}\,\dd\langle\alpha^{(n)},\beta^{(n)} \rangle_{s}\bigg\} + o_p(K_n\Delta t)\\
&  = \frac{1}{4K^2}\sum_{l_1=1}^{2K}\sum_{l_2=1}^{2K}\int_0^{T}
\int_{ t_{*,l_1}\vee t_{*,l_2} }^{s}\bigg\{\,f_u^{(l_1,n)}f_u^{(l_2,n)}g_s^{(l_1,n)}g_s^{(l_2,n)}\,\dd\langle \alpha^{(n)},\alpha^{(n)}\rangle_u \frac{\dd\langle \beta^{(n)},\beta^{(n)}\rangle_s}{\dd s}\\
& \qquad\qquad \quad  + g_u^{(l_1,n)}g_u^{(l_2,n)}f_s^{(l_1,n)}f_s^{(l_2,n)}\,\dd\langle \beta^{(n)},\beta^{(n)}\rangle_u \frac{\dd\langle \alpha^{(n)},\alpha^{(n)}\rangle_s}{\dd s}\\
& \qquad\qquad \quad  + (f_u^{(l_1,n)}g_u^{(l_2,n)}) (g_{s}^{(l_1,n)}f_{s}^{(l_2,n)})\,\dd \langle \alpha^{(n)},\beta^{(n)} \rangle_{u} \,\frac{\dd\langle\beta^{(n)},\alpha^{(n)} \rangle_{s}}{\dd s}\\
& \qquad\qquad \quad + (f_u^{(l_2,n)}g_u^{(l_1,n)})(f_{s}^{(l_1,n)}g_{s}^{(l_2,n)})\,\dd \langle \beta^{(n)},\alpha^{(n)} \rangle_{u}\,\frac{\dd\langle\alpha^{(n)},\beta^{(n)} \rangle_{s}}{\dd s}\bigg\}\,\dd s + o_p(K_n\Delta t)\\
& = (K_n\Delta t) \int_0^{T}\kappa_s^{(n)}\,\dd s + o_p(K_n\Delta t). 
\end{split}
\notag
\end{equation}
By assumption, there is a $\Falg$-measurable process $\kappa_s$ such that $\int_0^{t}\kappa_s^{(n)}\,\dd s \to_p \int_0^t \kappa_s\,\dd s$ for all $t$ as $n$ tends to infinity, so this shows that $(K\Delta_n)^{-1/2} Z_n(t)$ satisfies Condition~(i) of Theorem~\ref{theorem::th2.28_general}.

We now turn to Condition~(ii), that is the Lindeberg condition, of said theorem. To verify that this condition holds, we appeal to Condition~(ii)$^{\prime\prime}$ of Corollary~\ref{lemma::condition_ii}. We must verify that the sequence $(K_n\Delta_n)^{-1/2}Z_n$ is P-UT, that $\sup_{0 \leq t \leq T}\abs{\Delta Z_{n}(t)} = o_p(K\Delta_n)$ as $n \to \infty$, and that $\sup_{0 \leq t \leq T}\E\, (K\Delta_n)^{-1}(\Delta Z_{n}(t))^2 < \infty$ for all $n$. We have seen that $(K\Delta_n)^{-1}\langle Z_n,Z_n\rangle_t$ converges in probability, hence also in distribution, to the continuous and increasing process $\int_0^t \kappa_s\,\dd s$. By \citet[Theorem~VI.3.37, p.~354]{jacod2003limit} this yields process convergence of $(K\Delta_n)^{-1}\langle Z_n,Z_n\rangle$ to $\int_0^{\cdot} \kappa_s\,\dd s$, which means that $(K\Delta_n)^{-1}\langle Z_n,Z_n\rangle = O_p(1)$ in the sense of Definition~\ref{def:order-in-prob-b}. To see that $(K\Delta_n)^{-1/2}Z_n$ is P-UT, let $H^n$ be any predictable process with $\abs{H_t^n} \leq 1$, and let $H^n\cdot Z_n(t)$ be the elementary stochastic integral (see \citet[p.~377]{jacod2003limit} for both definitions). Now, $\E\, (H^n\cdot Z_n(t))^2 = \E\, (H^n\cdot Z_n(t))^2 = \E\,(H^n)^2\cdot[Z_n,Z_n]_t \leq \E\, \langle Z_n,Z_n\rangle_t$, so by Lenglart{'}s inequality, for any $ \eps,\eta > 0$, 
\begin{equation}
\begin{split}
P((K\Delta_n)^{-1/2}\abs{H^n\cdot Z_n(t)} \geq \eps) & \leq 
P((K\Delta_n)^{-1}\sup_{t \leq T}\abs{H^n\cdot Z_n(t)}^2 \geq \eps^2)\\ 
& \leq \eta/\eps^2 
+ P((K\Delta_n)^{-1}\langle Z_n,Z_n\rangle_T \geq \eta),
\end{split}
\notag
\end{equation}    
and that $(K\Delta_n)^{-1/2}Z_n$ is P-UT follows from the definition \citep[Definition~VI.6.1, p.~377]{jacod2003limit}, because $(K\Delta_n)^{-1}\langle Z_n,Z_n\rangle_T$ is tight. 

The jumps of $Z_n(t)$ are 
\begin{equation}
\Delta Z_n(t) = \frac{1}{2K}\sum_{l=1}^{2K}\Delta Z_{n,l}(t).
\notag
\end{equation}
Using~\eqref{eq::errorMG1} we see that  
\begin{equation}
\begin{split}
\Delta Z_{n,l}(t)  & = (\alpha_t^{(l,n)} - \alpha_{t_{*,l}}^{(l,n)})\Delta \beta_t^{(l,n)}
+ (\beta_t^{(l,n)}- \beta_{t_{*,l}}^{(l,n)})\Delta \alpha_t^{(l,n)}\\
& = (\alpha_t^{(l,n)} - \alpha_{t_{*,l}}^{(l,n)})g_{t}^{(l,n)} \Delta\beta_t^{(n)}
+ (\beta_t^{(l,n)}- \beta_{t_{*,l}}^{(l,n)})f_{t}^{(l,n)}\Delta \alpha_t^{(n)}\\
& = \big(\int_{t_{*,l}}^{t}f_s^{(l,n)}\,\dd \alpha_s^{(n)}\big)\,g_{t}^{(l,n)} \Delta\beta_t^{(n)}
+ \big(\int_{t_{*,l}}^{t}g_s^{(l,n)}\,\dd \beta_s^{(n)}\big)\,f_{t}^{(l,n)}\Delta \alpha_t^{(n)}.
\end{split}
\label{eq::Z_nl_jump}
\end{equation}
For any $t \leq \tau_n$, where $\tau_n = \inf (t \colon \langle \alpha^{(n)},\alpha^{(n)}\rangle_t > a_{+}^2 t \;\text{or}\; \langle \beta^{(n)},\beta^{(n)}\rangle_t > b_{+}^2)$, we have that by the It{\^o} isometry, 
\begin{equation}
\begin{split}
\E\,\big(\int_{t_{*,l}}^{t}f_s^{(l,n)}\,\dd \alpha_s^{(n)}\big)^2 \leq \E\, \int_{t_{*,l}}^{t}(f_s^{(l,n)})^{2}\,\dd \langle\alpha^{(n)},\alpha^{(n)}\rangle_s \leq a_{+}^2 (t - t_{*,l}) \leq a_{+}^2 K\Delta_n, 
\end{split}
\notag
\end{equation}
which show that $\big(\int_{t_{*,l}}^{t}f_s^{(l,n)}\,\dd \alpha_s^{(n)}\big)^2$ is $L$-dominated \citep[Definition~I.3.29, p.~35]{jacod2003limit} by the predictable process $\int_{t_{*,l}}^{t}(f_s^{(l,n)})^{2}\,\dd \langle\alpha^{(n)},\alpha^{(n)}\rangle_s$, and that the latter is $O_p((K\Delta_n)^{1/2})$. Therefore, by Condition~\ref{cond:rate},
\begin{equation}
P(\sup_{t \leq T}\abs{\int_{t_{*,l}}^{t}f_s^{(l,n)}\,\dd \alpha_s^{(n)}} \geq \eps)
 \leq P(\sup_{t \leq \tau_n}\abs{\int_{t_{*,l}}^{t}f_s^{(l,n)}\,\dd \alpha_s^{(n)}} \geq \eps)
+ P(\tau_n < T),
\notag
\end{equation}
which shows that both $\sup_{t\leq T}\abs{\int_{t_{*,l}}^{t}f_s^{(l,n)}\,\dd \alpha_s^{(n)}}$ and $\sup_{t\leq T}\abs{\int_{t_{*,l}}^{t}g_s^{(l,n)}\,\dd \beta_s^{(n)}}$ are $O_p((K\Delta_n)^{1/2})$ as $n \to \infty$. But for any $\eps > 0$,
\begin{equation}
\sup_{t \leq T}\abs{\Delta \beta_t^{(n)}} \leq \sup_{t \leq T}\abs{\Delta \beta_t^{(n)}}I\{\abs{\Delta \beta_t^{(n)}} \geq \eps\} + \eps
\leq \int_{\abs{x} \geq \eps}\abs{x}\,\mu_{\beta}^n([0,t]\times \dd x) + \eps \to \eps,
\notag
\end{equation}
as $n \to \infty$ by the Lindeberg condition in \eqref{eq::spot_lindeberg}, combined with Lenglart{'}s inequality. But since $\eps > 0$ was arbitrary, $\sup_{t\leq T}\abs{\Delta \beta_t^{(n)}} = o(1)$, and we conclude that $\sup_{t\leq T}\abs{\Delta Z_{n,l}(t)} = o_p((K\Delta_n)^{1/2})$. For the last condition, by \citet[Theorem~I.4.47(c), p.~52]{jacod2003limit}, the triangle inequality, and using that $[Z_n,Z_n]_t^2$ is an increasing process, we have that for any $t$, 
\begin{equation}
\E\, (\Delta Z_n(t))^2 = \E\, \Delta [Z_n,Z_n]_t \leq 2\E\, [Z_n,Z_n]_T = 2\E\,\langle Z_n,Z_n\rangle_T,
\notag
\end{equation}
but in Appendix~\ref{app::conv.rates} we saw that $\E\,\langle Z_n,Z_n\rangle_T \lesssim 4TK\Delta_n$. Thus $\sup_{t\leq T}(K\Delta_n)^{-1}\E\, (\Delta Z_n(t))^2<\infty$ for all $n$, and we conclude that Condition~(ii)$^{\prime\prime}$ of Corollary~\ref{lemma::condition_ii} is satisfied, and therefore also the Lindeberg condition of Theorem~\ref{theorem::th2.28_general}.

It remains to check Condition~(iii) of Theorem~\ref{theorem::th2.28_general}, namely that $(K\Delta_n)^{-1/2}\langle Z_n,X^n\rangle_t \overset{p}\to 0$ for each $t \in [0,T]$,
where $X^n$ is a sequence of bounded martingales. It is enough to check this condition for a sequence of processes $X^n$ that is either a sequence of Wiener processes, or a sequence of Poisson processes (this is a consequence of the representation theorem in \citet[Theorem~14.5.7, p.~360]{cohen2015stochastic}. This means that the the sequence $X^n$ has predictable quadratic variation $\langle X^n,X^n \rangle_t = t$ or $\langle X^n,X^n \rangle_t = \int_0^t \lambda_s \,\dd s$ for some deterministic function $\lambda$. For simplicity of notation we assume that $X^n$ is a sequence of Wiener processes. By the Kunita--Watanabe inequality, for $h > 0$ and $t+h \leq \tau_n$,
\begin{equation}
\begin{split}
|\langle\beta^{(l,n)}(h),X^n\rangle_{t+h} - \langle\beta^{(l,n)}(h),X^n\rangle_{t}|
& \leq \big( \int_{t}^{t+h} (g_s^{(l,n)})^2b_{n,s}^2\,\dd s \,\langle X^n,X^n\rangle_{(t,t+h]} 	\big)^{1/2}\\
& \leq \big( \int_{t}^{t+h} b_n(s)^2 \,\dd s \,h\big)^{1/2} \leq b_{+}  h. 
\end{split}
\notag
\end{equation}
Thus, $\dd |\langle \beta^{(l,n)}(h),X^n \rangle_t|/\dd t \leq b_{+}$, where 
\begin{equation}
|\langle \beta^{(l,n)}(h),X^n \rangle_t| = \langle \beta^{(l,n)}(h),X^n \rangle_t^{+} 
+ \langle \beta^{(l,n)}(h),X^n \rangle_t^{-},
\notag
\end{equation} 
denotes the positive plus the negative part of the function. For a fixed $l$ and $t_{i+K} \leq \tau_n$ (here $\alpha^{(l,n)} = \alpha^{(l,n)}(h)$, $\beta^{(l,n)} = \beta^{(l,n)}(h)$, etc.) 
\begin{equation}
\begin{split}
\norm{\int_{t_{i-K}}^{{t_{i+K}}} (\alpha^{(l,n)}_s 
- \alpha^{(l,n)}_{t_{i-K}})\,\dd \langle\beta^{(l,n)},X^n\rangle_s}_1
& \leq \E\,\int_{t_{i-K}}^{{t_{i+K}}} |(\alpha^{(l,n)}_s 
- \alpha^{(l,n)}_{t_{i-K}})|\,\dd |\langle \beta^{(l,n)},X^n\rangle_s|\\
& \leq b_{+}\,\int_{t_{i-K}}^{{t_{i+K}}} \norm{(\alpha^{(l,n)}_s 
- \alpha^{(l,n)}_{t_{i-K}})}_1\,\dd s\\
& \leq b_{+}\,\int_{t_{i-K}}^{{t_{i+K}}} \norm{(\alpha^{(l,n)}_s 
- \alpha^{(l,n)}_{t_{i-K}})}_2\,\dd s\\
& \leq a_{+}b_{+}\,\int_{t_{i-K}}^{{t_{i+K}}} (s - t_{i-K})\,\dd s
= \frac{a_{+}b_{+}}{2} (t_{i+K} - t_{i-K})^2, 
\end{split}
\notag
\end{equation}
where for the third inequality we have used H{\"o}lder{'}s inequality. Then, for $t < \tau_n$,
\begin{equation}
\begin{split}
\norm{\langle Z_{n,l},X^n\rangle_t}_1 & 
\leq \sum_{t_{i+K}\leq t,\,i\equiv l[2K]}\norm{\int_{t_{i-K}}^{{t_{i+K}}} (\alpha^{(l,n)}_s(h) 
- \alpha^{(l,n)}_{t_{i-K}}(h))\,\dd \langle \beta^{(l,n)}(h),N^n\rangle_s}_1[2]\\
& \qquad \qquad \qquad \qquad + \norm{\int_{t_{*,l}}^{t} (\alpha^{(l,n)}_s(h) 
- \alpha^{(l,n)}_{t_{*,l}}(h))\,\dd \langle \beta^{(l,n)}(h),N^n\rangle_s}_1[2]\\
& \leq a_{+}b_{+} \sum_{t_{i+K}\leq t,\,i\equiv l[2K]} (t_{i+K} -t_{i-K})^2 
+ a_{+}b_{+}(t - t_{*,l})^2 = O(K\Delta_n),
\end{split}
\notag
\end{equation}
hence $\langle Z_{n},X^n\rangle_t = (2K)^{-1}\sum_{l=1}^{2K} \langle Z_{n,l}(h),X^n\rangle_t =  o_p((K\Delta_n)^{1/2})$ for each $t \in [0,\tau_n]$, and the third requirement of Theorem~\ref{theorem::th2.28_general} follows from Condition~\ref{cond:rate}. 

We have now shown that the martingale sequence $(K\Delta_n)^{-1/2}Z_{n}$ converges stably in law to a $\Falg$-conditional Gaussian martingale with variance process $\int_0^t \kappa_s\,\dd s$. This proves the theorem.

\bibliography{highfreq_etcRefs}
\bibliographystyle{ecta}

\end{document}